\numberwithin{equation}{section}
\theoremstyle{plain}
\newtheorem{theorem}{Theorem}[section]
\newtheorem{lemma}[theorem]{Lemma}
\newtheorem{proposition}[theorem]{Proposition}
\newtheorem{corollary}[theorem]{Corollary}
\theoremstyle{definition}
\newtheorem{definition}[theorem]{Definition}
\newtheorem{example}[theorem]{Example}
\newtheorem{remark}[theorem]{Remark}
\DeclareMathOperator{\Hom}{\mathrm{Hom}}
\DeclareMathOperator{\Tor}{\mathrm{Tor}}
\DeclareMathOperator{\CHom}{\mathcal{H}\!\mathit{om}}
\DeclareMathOperator{\CExt}{\mathcal{E}\!\mathit{xt}}
\DeclareMathOperator{\Auteq}{\mathrm{Auteq}}
\DeclareMathOperator{\Cone}{\mathrm{Cone}}
\DeclareMathOperator{\ev}{\mathrm{ev}}
\DeclareMathOperator{\id}{\mathrm{id}}
\DeclareMathOperator{\Pic}{\mathrm{Pic}}
\DeclareMathOperator{\MCG}{\mathrm{MCG}}
\DeclareMathOperator{\Ker}{\mathrm{Ker}}
\DeclareMathOperator{\Image}{\mathrm{Im}}
\DeclareMathOperator{\Aut}{\mathrm{Aut}}
\DeclareMathOperator{\Inn}{\mathrm{Inn}}
\DeclareMathOperator{\Out}{\mathrm{Out}}
\DeclareMathOperator{\Supp}{\mathrm{Supp}}
\DeclareMathOperator{\SL}{\mathrm{SL}}
\DeclareMathOperator{\Spec}{\mathrm{Spec}}
\DeclareMathOperator{\Perf}{\mathrm{Perf}}
\DeclareMathOperator{\Ext}{\mathrm{Ext}}
\DeclareMathOperator{\Hilb}{\mathrm{Hilb}}
\DeclareMathOperator{\FM}{\mathrm{FM}}
\DeclareMathOperator{\res}{\mathrm{res}}
\DeclareMathOperator{\red}{\mathrm{red}}
\newcommand{\nc}{\newcommand}
\nc{\cA}{{\mathcal{A}}}
\nc{\cB}{{\mathcal{B}}}
\nc{\cE}{{\mathcal{E}}}
\nc{\cF}{{\mathcal{F}}}
\nc{\cG}{{\mathcal{G}}}
\nc{\cH}{{\mathcal{H}}}
\nc{\cL}{{\mathcal{L}}}
\nc{\cM}{{\mathcal{M}}}
\nc{\cN}{{\mathcal{N}}}
\nc{\cO}{{\mathcal{O}}}
\nc{\cP}{{\mathcal{P}}}
\nc{\cQ}{{\mathcal{Q}}}
\nc{\cR}{{\mathcal{R}}}
\nc{\cU}{{\mathcal{U}}}
\nc{\cW}{{\mathcal{W}}}
\nc{\bA}{{\mathbb{A}}}
\nc{\bC}{{\mathbb{C}}}
\nc{\bP}{{\mathbb{P}}}
\nc{\bQ}{{\mathbb{Q}}}
\nc{\bT}{{\mathbb{T}}}
\nc{\bZ}{{\mathbb{Z}}}
\newcommand*{\rom}[1]{\expandafter\@slowromancap\romannumeral #1@}
\title[Half-spherical twists]{
    Half-spherical twists on derived categories\\ of coherent sheaves
    }
\author[H.~Arai]{Hayato Arai}
\address{
Graduate School of Mathematical Sciences,
The University of Tokyo,
3-8-1 Komaba,
Meguro-ku,
Tokyo,
153-8914,
Japan.
}
\email{hayato@ms.u-tokyo.ac.jp}
\begin{document}
\begin{abstract}
    For a flat morphism $\pi \colon X \to T$ between smooth quasi-projective varieties and its fiber $X_0$,
    we prove that spherical objects on $D^b(X)$ pushed-forward from $D^b(X_0)$ induce autoequivalences of $D^b(X_0)$ itself.
    Our construction provides new derived symmetries for some singular varieties, which include singular fibers of elliptic surfaces (commonly referred to as Kodaira fibers) and type $\rom{3}$ degenerations of K3 surfaces.
    In the case of Kodaira fibers of type $\rom{1}_n$, we also show the induced autoequivalences of $D^b(X_0)$
    correspond to the \emph{half twists} on the $n$-punctured $2$-torus via homological mirror symmetry.
    As an application, we describe the autoequivalence groups of elliptic surfaces
    in terms of mapping class groups of punctured tori.
\end{abstract}
\maketitle

\section{Introduction}
\subsection{Background}
Let $X$ be a complex projective variety and $D^b(X)$ be the bounded derived category of coherent sheaves on $X$.
The group consisting of the isomorphism classes of $\bC$-linear exact autoequivalences of $D^b(X)$
is denoted by $\Auteq{D^b(X)}$.
There are always three fundamental types of autoequivalences for any $X$: pulling back by automorphisms of $X$, tensoring with line bundles, and the shift functor.
The subgroup generated by these autoequivalences
\begin{equation}
    \Aut{X} \ltimes \Pic{X} \times \bZ[1] \subset \Auteq{D^b(X)}
\end{equation}
is denoted by $A(X)$ and its elements
are called \emph{standard autoequivalences}.

Bondal and Orlov \cite{MR1818984} showed that $A(X) = \Auteq{D^b(X)}$ holds when $X$ is smooth projective, and either the canonical bundle $\omega_X$ or its dual is ample.
The simplest example that does not satisfy this condition is an elliptic curve.
In that case, we have an exact sequence
\begin{equation}\label{eq:autoequivalence-of-elliptic-curve}
    1 \to \Aut{X} \ltimes \Pic^{0}(X)\times \bZ[2] \to \Auteq{D^b(X)} \xrightarrow{\theta} \SL(2, \bZ) \to 1.
\end{equation}
This is a special case of Orlov's result \cite{MR1921811} for abelian varieties,
which goes back to the work of Mukai \cite{MR607081}
where non-trivial (auto-)equivalences of derived categories of abelian varieties were discovered.
The group homomorphism $\theta$ is given by the action on the numerical Grothendieck group $K_{num}(X) \cong \bZ^2$ and one has $A(X) \subsetneq \Auteq{D^b(X)}$ by computing the image $\theta(A(X))$.
To construct non-standard autoequivalences explicitly,
the notion of \emph{twist functors along spherical objects},
or \emph{spherical twists} for short,
introduced by Seidel and Thomas \cite{MR1831820}, is useful.
For example, the twist $T_{\cO_X}$ along the structure sheaf $\cO_X$ is not standard when $X$ is an elliptic curve.

There are several other varieties for which the autoequivalence groups are (partially) determined.
They include toric surfaces \cite{MR3162236}, $K3$ surfaces \cite{MR2553878, MR3592689}, elliptic surfaces \cite{MR3568337}, the minimal resolution of $A_n$-singularities \cite{MR2198807, MR2629510},
and some singular curves \cite{MR2264663, opperJEMS}.
The most fundamental tools to study autoequivalence groups in these studies are spherical twists and group actions on the Grothendieck group (or other suitable spaces) like $\theta$ in \eqref{eq:autoequivalence-of-elliptic-curve}.

Generalizations of spherical twists have been studied by various authors,
including
Horja \cite{MR2126495},
Huybrechts--Thomas \cite{MR2200048},
Toda \cite{MR2430202},
and
Anno--Logvinenko \cite{MR3692883},
to name a few.
Notably, \emph{$\bP^n$-twists along $\bP^n$-objects} introduced in \cite{MR2200048} are related to spherical twists in the following way.
Consider a smooth family of varieties $X \to C$  over a smooth curve $C$, with $i \colon X_{0} \hookrightarrow X$ denoting the inclusion of the fiber at $0 \in C$.
Under suitable assumptions, a $\bP^n$-object $\cE \in D^b(X_0)$ induces a spherical object $i_*\cE \in D^b(X)$.
The associated twists $P_{\cE} \in \Auteq{D^b(X_0)}$ and $T_{i_* \cE} \in \Auteq{D^b(X)}$ make the following diagram commutative \cite[Proposition 2.7]{MR2200048}:
\begin{equation} \label{eq:commutative-diagram-of-p-twist}
    \begin{tikzcd}
        D^b(X_0) \arrow[r,"i_*"]\arrow[d, "P_{\cE}"'] & D^b(X) \arrow[d,"T_{i_*{\cE}}"]\\
        D^b(X_0) \arrow[r, "i_*"]& D^b(X).
    \end{tikzcd}
\end{equation}

\subsection{Half-spherical twists}
The first goal of this paper is to generalize this picture to the case when $X$ is a flat but not necessarily smooth family over a smooth base $T$, and introduce a variant of spherical twists which we call \emph{half-spherical twists}.
First, let us recall the theory of \emph{relative integral functors}.
An \emph{integral functor} $\Phi_{\cP}$ with an \emph{integral kernel} $\cP \in D^b(X \times Y)$ is a functor of the form
\begin{equation}
    \Phi_{\cP} \colon D^b(X) \to D^b(Y), \quad \cF \mapsto \pi_{Y*}(\pi_X^*\cF \otimes \cP),
\end{equation}
where $X, Y$ are smooth projective varieties and $\pi_X, \pi_Y$ are the projections to $X$ and $Y$, respectively.
It was first introduced by Mukai \cite{MR607081} to construct a nontrivial derived equivalence between abelian varieties as mentioned above.
When an integral functor is an equivalence, it is also called a \emph{Fourier--Mukai transform}.
A relative integral functor is a generalization of this concept to the case when $X$ and $Y$ are defined over a common variety $T$.
In this version of integral functors, the projections are replaced by the relative ones $X \times_T Y \to X$ and $X \times_T Y \to Y$, and the kernel $\cP$ is taken from $D^b(X \times_T Y)$.
The general theory for relative integral functors has been developed in the literature such as \cite{MR1972146, MR2238172, MR2505443, MR2593258}, including the following fact.
\begin{proposition}[Proposition \ref{prop:restriction-to-fiber}]\label{prop:general-property-of-relative-integral-functors}
    Suppose $X \to T$ is a flat morphism or varieties.
    Let $T' \to T$ be a morphism and $X' = X \times_T T'$ be the base change of $X$.
    Let $f \colon X' \to X$ and $\varphi \colon X' \times_{T'} X' \to X \times_T X$ be the natural morphisms.
    Then for every integral kernel $\cP \in D_{qc}(X \times_T X)$ the integral functors $\Phi_{\cP}$ and $\Phi_{\varphi^* \cP}$ fit into the following commutative diagram:
    \begin{equation}
        \begin{tikzcd}
            D_{qc}(X') \ar[r, "f_*"] \ar[d, "\Phi_{\varphi^* \cP}"'] & D_{qc}(X) \ar[r, "f^*"]\ar[d, "\Phi_{\cP}"] & D_{qc}(X') \ar[d,  "\Phi_{\varphi^* \cP}"]\\
            D_{qc}(X') \ar[r, "f_*"'] & D_{qc}(X) \ar[r, "f^*"'] & D_{qc}(X').
        \end{tikzcd}
    \end{equation}
\end{proposition}
Utilizing this property, we show the first main result of this paper which generalizes the relation between $\bP^n$-twists and spherical twists:
\begin{theorem}[Proposition \ref{prop:twist-functor-is-relative-fm}, Definition \ref{def:half-spherical-twist}, and Corollary \ref{cor:compatibility-of-half-spherical-twists-and-spherical-twists}]\label{thm:main-theorem-1-half-spherical-twist}
    Let $\pi \colon X \to T$ be a flat morphism between smooth quasi-projective varieties over an algebraically closed field $k$ and $X_0$ be a fiber over a closed point $0 \in T$.
    Let $\cE \in D^b(X_0)$ be an object such that $i_*{\cE}$ is spherical in $D^b(X)$.
    Then the spherical twist $T_{i_*{\cE}}$ is a relative integral functor.
    In particular, there exists an autoequivalence $H_{\cE} \in \Auteq D^b(X_0)$ from $\cE$ and $i$ which makes the following diagram commutative:
    \begin{equation}\label{eq:half-spherical-twist-commutative-diagram}
        \begin{tikzcd}
            D^b(X_0) \ar[r, "i_*"] \ar[d, "H_{\cE}"'] & D^b(X) \ar[r, "i^*"]\ar[d, "T_{i_*{\cE}}"] & D^b(X_0) \ar[d,  "H_{\cE}"]\\
            D^b(X_0) \ar[r, "i_*"'] & D^b(X) \ar[r, "i^*"'] & D^b(X_0).
        \end{tikzcd}
    \end{equation}
\end{theorem}
Our contribution here is verifying that the twist $T_{i_*{\cE}}$ is indeed a relative integral functor.
We achieve this by chasing and carefully reformulating the argument of \cite{MR2200048} to make it valid in non-proper cases with singular fibers or higher-dimensional bases.
Once this is established, Theorem \ref{thm:main-theorem-1-half-spherical-twist} follows immediately from Proposition \ref{prop:general-property-of-relative-integral-functors}.

We call an object $\cE \in D^b(X_0)$ such that $i_*{\cE}$ is spherical in $D^b(X)$ a \emph{half-spherical object (with respect to $i$)} in this paper.
The associated autoequivalence $H_{\cE}$ is called a \emph{half-spherical twist along $\cE$}.
The reason of this naming is described in the next subsection.

\subsection{Applications}

Some interesting examples of half-spherical twists arise from flat but not necessarily smooth families of varieties, such as degenerations of elliptic curves (i.e.~elliptic surfaces, Example \ref{ex:half-spherical-twist-from-kodaira-fiber}) or K3 surfaces (Example \ref{ex:K3-degeneration}).
Among such families, we mainly focus on elliptic surfaces and their singular fibers, often referred to as \emph{Kodaira fibers}.
Let $\pi \colon S \to C$ be an elliptic surface and $F = \pi^{-1}(0)$ be a singular fiber.
If $F$ is reducible, then each irreducible component $G$ of $F$ is a $(-2)$-curve on $X$ and its structure sheaf $\cO_G$ becomes a spherical object in $D^b(X)$ so that Theorem \ref{thm:main-theorem-1-half-spherical-twist} provides the half-spherical twist $H_{\cO_G}$.
The properties of $H_{\cO_G}$ can be studied, especially for Kodaira fibers of type $\rom{1}_n$, through \emph{homological mirror symmetry (HMS)} for punctured tori \cite{MR3663596} (Theorem \ref{thm:mirror-symmetry-for-F_n}).
It relates the derived category of the type $\rom{1}_n$ Kodaira fiber $F_n$ to the Fukaya category of $n$-punctured $2$-torus $T_n$.
Our second result identifies the autoequivalence $H_{\cO_G}$ with a
\emph{half twist} on $T_n$ (see Section \ref{subsection:Dehn-twists-and-mapping-class-groups} for definition).

\begin{theorem}[Theorem \ref{thm:half-spherical-twist-and-half-twist}]\label{thm:main-theorem-2-half-twist}
    Let $\pi \colon S \to C$ be an elliptic surface and $F_n = \pi^{-1}(0)$ be a Kodaira fiber of type $\rom{1}_n$ with $n \geq 2$.
    Let $G \subset F_n$ be an irreducible component and $\gamma_{\cO_G}$ be the curve on $T_n$ corresponding to $\cO_G$ via homological mirror symmetry $D^b(F_n) \cong D^b(\cW(T_n))$, where $\cW(T_n)$ is the wrapped Fukaya category of $T_n$ \cite{MR3663596}.
    Then the half-spherical twist $H_{\cO_G}$ corresponds to the half twist $h_{\gamma_{\cO_G}}$ along $\gamma_{\cO_G}$ on $T_n$.
    More precisely, the twist $H_{\cO_G}$ is mapped to $h_{\gamma_{\cO_G}}$ by the morphism
    \begin{equation}
        \Upsilon \colon \Auteq{D^b(F_n)} \to \MCG(T_n)
    \end{equation}
    defined in \cite{opperJEMS} (see Theorem \ref{thm:autoequivalence-of-I_n-curve}).
\end{theorem}
The term \emph{half-spherical twists} is motivated by this result, as the original spherical twists are named after their analogy to Dehn twists.
The proof of Theorem \ref{thm:main-theorem-2-half-twist} relies on the following facts:
\begin{enumerate}
    \item[(A)] An element of the mapping class group of $T_n$ is completely determined by its action on $\pi_1(T_n)$ (this is a part of Dehn--Nielsen--Baer theorem, see Theorem \ref{thm:Dehn--Nielsen--Baer}).
    \item[(B)] HMS provides the correspondences between indecomposable objects of $D^b(F_n)$ and homotopy classes of curves on $T_n$ (along with some additional data), and between dimensions of $\Hom$-spaces in $D^b(F_n)$ and intersection numbers of curves on $T_n$.
    \item[(C)] The fundamental group $\pi_1(T_n)$ is generated by the curves $\gamma_0, \gamma_1, \dots, \gamma_n$ on $T_n$ corresponding to the objects $\cO_{F_n}, \cO_{x_1}, \dots, \cO_{x_n} \in D^b(F_n)$, where $x_1, \dots, x_n$ are certain points on $F_n$.
\end{enumerate}
Our strategy consists of the following steps.
\begin{enumerate}
    \item Utilizing the facts (A) and (C), the problem is reduced to determining the homotopy classes of curves $\Upsilon(H_{\cO_G})(\gamma_i)$ for $0 \leq i \leq n$.
    \item By the fact (B) and a certain property of $\Upsilon$, we can compute the intersection numbers $\#(\Upsilon(H_{\cO_G})(\gamma_i) \cap \gamma)$ for various $\gamma \in \pi_1(T_n)$ using homological algebra. This data is enough to determine the homotopy classes of curves $\Upsilon(H_{\cO_G})(\gamma_i)$.
\end{enumerate}

The final result of this paper is about autoequivalence groups of elliptic surfaces.
Our starting point is the following Uehara's result.
\begin{theorem}[{\cite[Theorem 4.1]{MR3568337}}]\label{thm:autoequivalence-of-elliptic-surface}
    Let $\pi \colon S \to C$ be a relatively minimal, smooth projective elliptic surface with non-zero Kodaira dimension.
    Let further
    \begin{equation}
        B = \langle T_{\cO_G(a)} \mid G \subset S \text{ is a $(-2)$-curve, } a \in \bZ \rangle
    \end{equation}
    be the subgroup of $\Auteq D^b(S)$ generated by twist functors coming from $(-2)$-curves.
    Suppose that all the reducible fibers of $\pi$ are non-multiple and of type $\rom{1}_n$ for some $n \geq 2$ (i.e.~the cycle of $n$ projective lines).
    Then there is the exact sequence
    \begin{align}
        1 \to \langle B, (-)\otimes \cO_S(D)\mid D.F=0, F \textrm{ is a fiber } \rangle & \rtimes \Aut{S} \times \bZ[2]                      \\
                                                                                        & \to \Auteq{D^b(S)} \xrightarrow{\Theta} \SL(2,\bZ)
    \end{align}
    of groups.
    Moreover,
    \begin{enumerate}
        \item there is an explicit characterization of the image $\Image(\Theta)$ in terms of certain moduli spaces of sheaves on $S$ (see \cite[Section 1]{MR3568337} for details);
        \item if $\pi$ has a section, then $\Theta$ is surjective.
    \end{enumerate}
\end{theorem}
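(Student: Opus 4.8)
The plan is to build the homomorphism $\Theta$ by restricting autoequivalences to a general fibre, then to check that the displayed subgroup lies inside $\ker\Theta$ and splits as the asserted semidirect-times-direct product, and finally to prove the reverse inclusion $\ker\Theta\subseteq\langle B,(-)\otimes\cO_S(D)\mid D\cdot F=0\rangle\rtimes\Aut{S}\times\bZ[2]$ by a local-to-global analysis over $C$, in which the local model at a reducible fibre is precisely the $\mathrm{I}_n$-configuration studied in the rest of the paper.

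\textbf{Constructing $\Theta$.} Since $\kappa(S)\neq 0$, the canonical bundle formula gives $K_S\equiv cF$ numerically with $c\neq 0$, so by the Hodge index theorem the fibre class $f=[F]\in\NS{S}$ is, up to sign, the unique primitive isotropic class orthogonal to $K_S$; hence the elliptic fibration $\pi$ is intrinsic to $S$, and since any $\Phi\in\Auteq{D^b(S)}$ preserves $K_S$ up to sign (it commutes with the Serre functor) we get $\Phi_*f=\pm f$ on $K_{\mathrm{num}}(S)$. Composing with $[1]$ if necessary I may assume $\Phi_*f=f$, so that $\Phi$ preserves the rank-two primitive sublattice $\Lambda=\langle[\cO_F],[\cO_{\mathrm{pt}}]\rangle$; equivalently, by the relative integral functor formalism of Lemma~\ref{main_lemma} restriction to a general (smooth) fibre $F$ is well defined modulo the kernel of the map $\theta$ of \eqref{eq:autoequivalence_of_elliptic_curve}, and composing with $\theta$ yields $\Theta\colon\Auteq{D^b(S)}\to\SL(2,\bZ)$. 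A Chern-character computation then shows that $T_{\cO_G(a)}$, $(-)\otimes\cO_S(D)$ with $D\cdot F=0$, the $g^*$ for $g\in\Aut{S}$, and $[2]$ all act trivially on $\Lambda$ — for the twist one uses that a $(-2)$-curve on $S$ is a component of a reducible fibre, so $G\cdot F=0$ — whence the displayed subgroup lies in $\ker\Theta$. For the product structure I will use the conjugation identity $g^*\circ T_E\circ(g^*)^{-1}\cong T_{g^*E}$, so that $\Aut{S}$ normalises $\langle B,(-)\otimes\cO_S(D)\mid D\cdot F=0\rangle$ (an automorphism permutes the $(-2)$-curves and preserves the condition $D\cdot F=0$), together with the observation that every element of $\langle B,(-)\otimes\cO_S(D)\rangle\rtimes\Aut{S}$ carries a sufficiently general skyscraper sheaf to a skyscraper in cohomological degree $0$; this forces $\Aut{S}\cap\langle B,(-)\otimes\cO_S(D)\rangle=\{1\}$ and exhibits $\bZ[2]$ as a central direct factor.

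\textbf{The reverse inclusion.} Suppose $\Theta(\Phi)=\id$. The restriction of $\Phi$ to a general fibre $F$ then lies in $\ker\theta$, hence by \eqref{eq:autoequivalence_of_elliptic_curve} is a translation composed with a tensor by a line bundle of degree $0$ and an even shift. As $F$ ranges over the generic point of $C$ these data glue: the translation part determines a rational section of (a relative moduli space which one identifies with) $S\to C$, so after replacing $\Phi$ by $\Phi\circ(g^*)^{-1}$ for a suitable $g\in\Aut{S}$ I may assume it is trivial; the line-bundle part then determines a line bundle on $S$ trivial on every fibre, that is $\cO_S(D)$ with $D\cdot F=0$ up to pullback from $C$, which I remove by composing with $(-)\otimes\cO_S(-D)$; the shift part contributes some $[-2k]$. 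After these corrections $\Phi$ restricts to the identity over the generic point of $C$, so by Lemma~\ref{main_lemma} its kernel has the form $\iota_*\widetilde P$ with $\widetilde P\in D^b(S\times_C S)$ restricting to $\cO_\Delta$ on $F\times F$. It then remains to identify the group of such $\Phi$: away from the finitely many reducible fibres $\widetilde P$ is forced to be the graph of a tensor by $\cO_S(D')$ with $D'\cdot F=0$, while over an (analytic or formal) neighbourhood of an $\mathrm{I}_n$ fibre the residual ambiguity is governed by the derived category of the cycle of $n$ projective lines, where the autoequivalences that are trivial at the generic point turn out to be generated by the spherical twists $T_{\cO_G(a)}$ along the components $G$. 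Assembling the local contributions and unwinding the corrections places $\Phi$ in $\langle B,(-)\otimes\cO_S(D)\mid D\cdot F=0\rangle\rtimes\Aut{S}\times\bZ[2]$, as required. (Surjectivity of $\Theta$ is not asserted by the statement; it would follow from realising the standard generators of $\SL(2,\bZ)$ by a line-bundle twist of relative degree one together with a relative Fourier--Mukai transform along the relative compactified Jacobian of $\pi$.)

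\textbf{Main obstacle.} The crux is this last local-to-global step, and specifically the claim that an autoequivalence supported near an $\mathrm{I}_n$ fibre and trivial at the generic fibre lies in the group generated by the component twists $T_{\cO_G(a)}$: this is exactly where the hypothesis that all reducible fibres are non-multiple of type $\mathrm{I}_n$ enters, since it identifies the relevant local category with the derived category of a cycle of $\bP^1$'s — equivalently, by homological mirror symmetry, of a punctured torus — for which the required generation and the relations among the $T_{\cO_G(a)}$ become available; this is the circle of ideas developed in the remainder of the paper. Secondary difficulties will be the passage from fibrewise data to honest line bundles and automorphisms of $S$ (treating the reducible and any multiple fibres carefully, including the moduli-theoretic identifications used above) and the bookkeeping needed to see that the generated subgroup splits precisely as the stated semidirect-times-direct product.
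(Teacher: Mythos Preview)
This theorem is not proved in the paper at all: it is quoted verbatim from Uehara's work \cite[Theorem~4.1]{MR3568337} as background, and the paper's own contribution begins \emph{after} this point, taking Uehara's exact sequence as given and then analysing the structure of the subgroup $B$ via the restriction maps $r_{Y_j}$ and mirror symmetry. So there is no ``paper's own proof'' to compare against.

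That said, your sketch contains a genuine gap that is worth flagging. You invoke Lemma~\ref{main_lemma} to restrict an arbitrary $\Phi\in\Auteq D^b(S)$ to a general fibre and hence to define $\Theta$, and again later to deduce that the kernel of $\Phi$ has the form $\iota_*\widetilde P$. But Lemma~\ref{main_lemma} applies only to $\Phi\in\widetilde B$, the group generated by spherical twists along objects pushed forward from fibres; it says nothing about a general autoequivalence of $D^b(S)$. Establishing that \emph{every} autoequivalence has a relative kernel (equivalently, is ``over $C$'') is precisely the hard content of Uehara's argument, and it is not supplied by the machinery of this paper. Likewise, your claim that the local analysis at an $\mathrm I_n$ fibre is ``the circle of ideas developed in the remainder of the paper'' mis-reads the logical order: the paper does not feed back into Uehara's proof but rather refines its output, describing $B$ via the map $B\to\prod_j\MCG(\sT^{n_j})$ once Uehara's theorem is already in hand. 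In Uehara's actual proof the key input at a reducible fibre is the classification of spherical objects supported there (his Proposition~4.4, cited here in Proposition~\ref{B_is_the_same_one_as_in_Uehara}), not the mirror-symmetric description.
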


We give a better understanding of the group $B$ in terms of mapping class groups of punctured tori.
Let $F^{(1)}, \dots, F^{(r)}$ be the reducible fibers of $\pi$.
We can show that there is a natural ``restriction'' morphism
\begin{equation}
    \res \colon B \to \prod_{j=1}^r \Auteq{D^b(F^{(j)})}
\end{equation}
whose kernel is generated by $(-) \otimes \cO_S(F^{(j)}), 1 \leq j \leq r$ (see \eqref{eq:definition-of-res} and Proposition \ref{prop:kernel-of-res} for definition).
Assuming all the reducible fibers are of type $\rom{1}_n$, we obtain the following by applying Theorem \ref{thm:main-theorem-2-half-twist},

\begin{theorem}[Theorem \ref{thm:description-of-B}]\label{thm:main-theorem-3-description-of-B}
    Let $\pi \colon S \to C$ be a relatively minimal, smooth projective elliptic surface with non-zero Kodaira dimension.
    Let $F^{(1)}, \dots, F^{(r)}$ be its reducible fibers and suppose that each $F^{(j)}$ is of type $\rom{1}_{n_j}$ for some $n_j \geq 2$.
    Then there is an exact sequence
    \begin{equation}
        1 \to \langle (-)\otimes \cO_S(F^{(j)}) \mid 1 \leq j \leq r \rangle \to B \xrightarrow{\psi} \prod_{j = 1}^r \MCG(T_{n_j}),
    \end{equation}
    where $\psi$ is the composition of $\res$ and $\Upsilon$.
    Moreover, we have the following description of the image $\Image(\psi)$:
    \begin{enumerate}
        \item Let $p_j \colon \prod_{j=1}^r \MCG(T_{n_j}) \to \MCG(T_{n_j})$ be the projection onto the $j$-th component. Then the image $\Image(\psi)$ is the product of its projections onto the individual components $\Image(\psi) = \prod_{j = 1}^r \Image(p_j \circ \psi)$.
        \item For the curves $\gamma_{\cO_{G_1}(-1)}, \dots, \gamma_{\cO_{G_{n_j}}(-1)}, \gamma_{\cO_{G_1}}, \dots, \gamma_{\cO_{G_{n_j}}}$ on $T_{n_j}$ shown in Figure~\ref{fig:generators-for-image-of-B-intro}, the half twists along them generates the subgroup $\Image(p_j \circ \psi) \subset \MCG(T_{n_j})$.
    \end{enumerate}
\end{theorem}

\begin{figure}[h]
    \centering
    \begin{displaymath}
        \begin{tikzpicture}[scale=1.2]
            \draw[dashed] (0,0)--(10,0);
            \draw[dashed] (0,0)--(0,4);
            \draw[dashed] (0,4)--(10,4);
            \draw[dashed] (10,4)--(10,0);

            \draw[thick] (0, 2)--(1, 2);
            \draw[thick] (1, 2)--(3, 2);
            \draw[thick] (3, 2)--(5, 2);
            \draw[thick] (5, 2)--(6, 2);
            \draw[thick] (8, 2)--(9, 2);
            \draw[thick] (9, 2)--(10, 2);

            \draw[thick] (0, 4)--(1, 2);
            \draw[thick] (1, 2)--(2, 0);
            \draw[thick] (2, 4)--(4, 0);
            \draw[thick] (4, 4)--(5, 2);
            \draw[thick] (5, 2)--(5.5, 1);
            \draw[thick] (8.5, 3)--(9, 2);
            \draw[thick] (9, 2)--(10, 0);

            \draw[dotted, thick] (5+1.5, 2)--(9-1.5, 2);
            \foreach \u in {1, 3, 5, 9}
                {
                    \filldraw[white] (\u, 2) circle (2pt);
                    \draw[black] (\u, 2) circle (2pt);
                }

            \draw(2, 0) node[below]{$\gamma_{\cO_{G_1}}$};
            \draw(4, 0) node[below]{$\gamma_{\cO_{G_2}}$};
            \draw(10, 0) node[below]{$\gamma_{\cO_{G_{n_j}}}$};

            \draw(2, 3) node[above left]{$\gamma_{\cO_{G_1}(-1)}$};
            \draw(1, 3) to[out=-90,in=135](1.5, 2);
            \draw(4, 3) node[above left]{$\gamma_{\cO_{G_2}(-1)}$};
            \draw(3, 3) to[out=-90,in=135](3.5, 2);

            \draw(10, 2) node[right]{$\gamma_{\cO_{G_{n_j}}(-1)}$};

        \end{tikzpicture}
    \end{displaymath}
    \caption{Curves along which the half twists generate $\Image(B \xrightarrow{p_j \circ \psi} \MCG(T_{n_j}))$. The big rectangle illustrates the torus $T_{n_j}$. The top and bottom edges (resp.~the left and right edges) are identified, and the white circles represent the punctures.} \label{fig:generators-for-image-of-B-intro}
\end{figure}

Note that the morphism $\psi \colon B \to \prod_{j=1}^r \MCG(T_{n_j})$ is not surjective as explained in Remark \ref{rm:psi-is-not-surjective}.

\subsection{Organization of the paper}
This paper is organized as follows.
In Section \ref{section:preliminaries}, we recall some basic facts about derived functors and mapping class groups.
In Section \ref{section:fourier-mukai-transforms-and-twist-functors}, we review the theory of relative integral functors and spherical twists.
In Section \ref{section:half-spherical-twists}, we give a sketch of the proof of Theorem \ref{thm:main-theorem-1-half-spherical-twist} and give some examples of half-spherical twists.
These results are applied to elliptic surfaces in Section \ref{section:applications-to-elliptic-surfaces} to show Theorem \ref{thm:main-theorem-2-half-twist} and Theorem \ref{thm:main-theorem-3-description-of-B}.
In Appendix \ref{section:appendix}, we provide detailed arguments that were omitted from the main text.

\subsection{Notations and conventions}\label{subsection:notations-and-conventions}
\subsubsection*{Schemes}
\begin{enumerate}
    \item All schemes are assumed to be quasi-compact and quasi-separated (qcqs, e.g.~noetherian). Consequently, all morphisms of schemes are assumed to be qcqs.
    \item For an algebraically closed field $k$, an \emph{algebraic variety} over $k$ is a separated and reduced scheme of finite type over $k$. A \emph{curve} (resp.~\emph{surface}) is an algebraic variety of dimension $1$ (resp.~$2$).
    \item A \emph{point} on a variety means a closed point unless otherwise specified. The skyscraper sheaf at a point $x$ is denoted by $\cO_x$.
\end{enumerate}

\subsubsection*{Derived categories}
Let $X$ be a scheme.
\begin{enumerate}
    \item $D(\cO_X)$ is the derived category of $\cO_X$-modules.
    \item $D_{qc}(X)$ is the full subcategory of $D(\cO_X)$ consisting of complexes with quasi-coherent cohomology.
    \item $D(X)$ is the full subcategory of $D(\cO_X)$ consisting of complexes with coherent cohomology.
    \item $D_{qc}^*(X)$ and $D^*(X)$ are the full subcategories of $D_{qc}(X)$ and $D(X)$ consisting of complexes with bounded cohomology, where $* = +$, $-$, and $b$ stand for bounded below, bounded above, and bounded, respectively.
    \item $\Perf(X)$ is the full subcategory of $D(\cO_X)$ consisting of perfect complexes.
    \item For a complex $\cF \in D(\cO_X)$ and $i \in \bZ$, its $i$-th cohomology sheaf is denoted by $\cH^i(\cF)$.
\end{enumerate}

\subsubsection*{Derived functors}
\begin{enumerate}
    \item We drop the letters $L$ and $R$ from the notation of derived functors unless otherwise specified. For example, $\CHom(-, -)$ and $(-)\otimes (-)$ denote the derived sheaf Hom and the derived tensor product, respectively.
    \item To avoid the confusion between the ordinary Hom and the derived Hom, we always use the symbol $\Ext^0(-, -)$ for the former and $\Ext^*(-, -)$ for the latter.
\end{enumerate}

\subsection{Acknowledgements}
The author expresses his sincere gratitude to his advisor, Kazushi Ueda, for the helpful discussions, suggestions, and encouragement provided during this research.
He is grateful to his family for their understanding and constant support throughout his life.
He is also thankful to Tomohiro Karube for valuable discussions and for informing him about the paper \cite{karube2023stability}, which includes ideas similar to those presented in the first result, Theorem \ref{thm:main-theorem-1-half-spherical-twist}.
Finally, he appreciates the anonymous referees for their careful reading and valuable comments that helped improve the presentation of this paper.

\section{Preliminaries}\label{section:preliminaries}
\subsection{Tor-independence and Kunneth formula}
Let $S$ be a scheme.
Let $X, Y$ be schemes over $S$.
We say $X$ and $Y$ are \emph{tor-independent over $S$}, if for every $x \in X$ and $y \in Y$ over the same $s \in S$ one has
\begin{equation}
    \Tor_i^{\cO_{S, s}}(\cO_{X, x}, \cO_{Y, y}) = 0 \quad \text{for all } i > 0.
\end{equation}
We say a cartesian diagram
\[
    \begin{tikzcd}
        X \times_S Y \ar[r] \ar[d] & X \ar[d]\\
        Y \ar[r] & S
    \end{tikzcd}
\]
is \emph{tor-independent} if $X$ and $Y$ are tor-independent over $S$.
For example, if either $X$ or $Y$ is flat over $S$, then they are tor-independent over $S$.
The following base change theorem includes the flat base change theorem as a special case.

\begin{theorem}[{\cite[Theorem 22.99]{MR4704076} or \cite[\href{https://stacks.math.columbia.edu/tag/08ET}{Tag 08ET}]{stacks-project}}]\label{thm:tor-independent-base-change-theorem}
    Let
    \[
        \begin{tikzcd}
            X \times_S Y \arrow[r,"u'"]\arrow[d, "f'"'] & X \arrow[d,"f"]\\
            Y \arrow[r, "u"']& S
        \end{tikzcd}
    \]
    be a cartesian diagram of schemes, and let $g = u \circ f' = f \circ u'$.
    The following are equivalent.
    \begin{enumerate}
        \item $X$ and $Y$ are tor-independent over $S$.
        \item The natural base change morphism $u^*f_* \to f'_*u'^*$ is an isomorphism of functors $D_{qc}(X) \to D_{qc}(Y)$.
        \item The natural morphism
              \begin{align}
                  f_*\cF \otimes u_*\cG & \xrightarrow{\eta_g} g_*g^*(f_*\cF \otimes u_*\cG)                                              \\
                                        & \xrightarrow{\sim} g_*(g^*f_*\cF \otimes g^*u_*\cG)                                             \\
                                        & \xrightarrow{\sim} g_*(u'^*f^*f_*\cF \otimes f'^*u^*u_*\cG)                                     \\
                                        & \xrightarrow{g_*(u'^*(\varepsilon_f) \otimes f'^*(\varepsilon_u))} g_*(u'^*\cF \otimes f'^*\cG)
              \end{align}
              is an isomorphism for all $\cF \in D_{qc}(X)$ and $\cG \in D_{qc}(Y)$.
              Here $\eta_g \colon \id \to g_*g^*$ is the adjunction unit, and $\varepsilon_f \colon f^*f_* \to \id$ and $\varepsilon_u \colon u^*u_* \to \id$ are the adjunction counits.
    \end{enumerate}
\end{theorem}
\begin{remark}
    A cartesian square satisfying these conditions is called \emph{exact cartesian} in \cite{MR2238172}.
\end{remark}

As an application of the tor-independent base change theorem, we have the following Kunneth formula.
Let $k$ be a field, $f \colon Y \to X$ be a morphism of $k$-schemes, and $g = f\times f \colon Y \times_k Y \to X \times_k X$.
Consider the diagram of cartesian squares
\begin{equation}\label{eq:Kunneth-formula-situation}
    \begin{tikzcd}
        Y\times_k Y \arrow[r,"g_2"]\arrow[d, "g_1"] & X\times_k Y \arrow[d,"f_2"] \arrow[r, "p_2"] &Y\arrow[d, "f"]\\
        Y \times_k X \arrow[r, "f_1"]\arrow[d, "p_1"]& X \times_k X \arrow[r, "\pi_2"]\arrow[d, "\pi_1"]&X\\
        Y \arrow[r, "f"]&X&
    \end{tikzcd}
\end{equation}
in which $\pi_1, \pi_2, p_1, p_2$ are the natural projections, and $f_1, f_2, g_1, g_2$ are the natural morphisms induced by $f$.
All the squares here are tor-independent by the flatness of the projections and \cite[Lemma 2.25]{MR2238172}.

\begin{proposition}[Kunneth formula]\label{prop:Kunneth-formula}
    Let $k$ be a field, $f \colon Y \to X$ be a morphism of $k$-schemes, and $g = f\times f \colon Y \times_k Y \to X \times_k X$.
    Then for $\cF, \cG \in D_{qc}(Y)$, there is a natural isomorphism
    \begin{equation}
        f_*\cF \boxtimes f_*\cG \xrightarrow{\sim}g_*(\cF \boxtimes \cG).
    \end{equation}
\end{proposition}
\begin{proof}
    Since the natural maps $\pi_i^*j_* \to f_{i*}p_i^*$ for $i = 1, 2$ are isomorphisms by Theorem \ref{thm:tor-independent-base-change-theorem}(2), we have an isomorphism
    \begin{equation}
        f_*\cF \boxtimes f_*\cG \xrightarrow{\sim} f_{1*}p_1^*\cF \otimes f_{2*}p_2^*\cG.
    \end{equation}
    In addition, there is a natural isomorphism
    \begin{equation}
        f_{1*}p_1^*\cF \otimes f_{2*}p_2^*\cG \xrightarrow{\sim} g_*(\cF \boxtimes \cG)
    \end{equation}
    by Theorem \ref{thm:tor-independent-base-change-theorem}(3).
    This finishes the proof.
\end{proof}

\subsection{Grothendieck duality}
Next, we briefly recall Grothendieck duality.
Let $f \colon X \to Y$ be a proper morphism between noetherian schemes.
Then the push-forward functor $f_* \colon D_{qc}^+(X) \to D_{qc}^+(Y)$ has the right adjoint $f^! \colon D^+(Y) \to D^+(X)$, which is called the \emph{twisted inverse image functor} or \emph{upper shriek functor} \cite[\href{https://stacks.math.columbia.edu/tag/0A9Y}{Tag 0A9Y}]{stacks-project}.
The adjoint isomorphism
\begin{equation}
    \Hom_{D^+(Y)}(f_*\cF, \cG) \cong \Hom_{D^+(X)}(\cF, f^!\cG)
\end{equation}
have the following refinement for suitable $\cF$ and $\cG$.

\begin{theorem}[Grothendieck duality, {\cite[\href{https://stacks.math.columbia.edu/tag/0AU3}{Tag 0AU3}]{stacks-project}}]\label{grothendieck_duality}
    Let $f \colon X \to Y$ be a proper morphism between noetherian schemes.
    Then for $\cF \in D^-(X)$ and $\cG \in D_{qc}^+(Y)$ there is a natural isomorphism
    \begin{equation}\label{eq:grothendieck_duality}
        f_*\CHom_X(\cF, f^!\cG) \xrightarrow{\sim} \CHom_Y(f_*\cF, \cG).
    \end{equation}

\end{theorem}
\begin{remark}\label{remark:grothendieck-duality-isomorphism}
    For any $f \colon X \to Y$ and $\cA, \cB \in D(\cO_X)$, consider the morphism
    \begin{equation}\label{eq:natural-morphism1}
        f_*\CHom_X(\cA, \cB) \otimes f_*\cA \to f_*(\CHom_X(\cA, \cB) \otimes \cA) \to f_*\cB.
    \end{equation}
    Here the first one is the adjoint to
    \begin{equation}
        f^*(f_*\CHom_X(\cA, \cB) \otimes f_*\cA) \cong f^*f_*\CHom_X(\cA, \cB) \otimes f^*f_*\cA \xrightarrow{\varepsilon_f \otimes \varepsilon_f} \CHom_X(\cA, \cB) \otimes \cA
    \end{equation}
    with $\varepsilon_f$ being the counit of the adjunction $f^* \dashv f_*$.
    It is called the \emph{relative cup product} (see Definition \ref{def:relative-cup-product}).
    The second one is induced by the natural evaluation.
    By taking the adjoint to \eqref{eq:natural-morphism1} we have another natural morphism
    \begin{equation}\label{eq:push-forward-hom}
        f_*\CHom_X(\cA, \cB) \to \CHom_Y(f_*\cA, f_*\cB).
    \end{equation}
    The isomorphism of Grothendieck duality is then given by the following composition \cite[\href{https://stacks.math.columbia.edu/tag/0A9D}{Tag 0A9D}]{stacks-project}:
    \begin{equation}
        f_*\CHom_X(\cF, f^!\cG) \xrightarrow{\eqref{eq:push-forward-hom}} \CHom_Y(f_*\cF, f_*f^!\cG) \xrightarrow{(\text{counit of } f_* \dashv f^!) \circ -} \CHom_Y(f_*\cF, \cG).
    \end{equation}
\end{remark}

\subsection{Relatively perfect complexes}
In order to introduce a result from \cite{MR3720794} in the next section (Theorem \ref{thm:adjoint-to-integral-functors}), we need to recall the notion of relatively perfect complexes.
The original concept was developed by Illusie in a general setting in SGA6 \cite{zbMATH03363721}, and \cite{MR3720794} adopts an equivalent definition for certain special cases.
The following definition is a simplified version of Illusie's one under the noetherian assumption\footnote{In \cite[Definition 4.1]{zbMATH03363721}, a complex $\cF$ is said to be $f$-perfect if it is \emph{pseudo-coherent} and satisfies the condition (2) in Definition \ref{def:relative-perfection}. For a noetherian scheme $X$, \cite[\href{https://stacks.math.columbia.edu/tag/08E4}{Tag 08E4}]{stacks-project} says that the pseudo-coherence is equivalent to the condition (1) in our definition. Therefore, Definition \ref{def:relative-perfection} is equivalent to \cite[Definition 4.1]{zbMATH03363721} when $X$ is noetherian.}, which covers the cases considered in \cite{MR3720794}.
\begin{definition}[{\cite[Definition 4.1]{zbMATH03363721}}]\label{def:relative-perfection}
    Let $f \colon X \to Y$ be a morphism of finite type between noetherian schemes.
    A complex $\cF \in D(\cO_X)$ is said to be \emph{$f$-perfect (or relatively perfect to $f$)} if
    \begin{enumerate}
        \item $\cF \in D^-(X)$, and
        \item $\cF$ is locally isomorphic in $D(f^{-1}\cO_Y)$ to a bounded complex of flat $f^{-1}\cO_Y$ modules.
    \end{enumerate}
\end{definition}
For example, if $f$ is flat, then every perfect complex on $X$ is $f$-perfect.
Note that $f$-perfect complexes form a triangulated subcategory of $D^-(X)$.
We present a useful criterion from \cite{MR4604981} for relative perfection.
\begin{proposition}\label{prop:crirtion_for_relative_perfection}
    Let $f \colon X \to Y$ be a morphism of finite type between noetherian schemes.
    For every complex $\cF \in D(\cO_X)$, the following are equivalent:
    \begin{enumerate}
        \item $\cF$ is $f$-perfect.
        \item  $\cF \in D^-(X)$, and $\cF \otimes f^*\cG$ is bounded for every $\cG \in D_{qc}^b(Y)$.
    \end{enumerate}
    In particular, an $f$-perfect complex is automatically in $D^b(X)$.
\end{proposition}
\begin{proof}
    The condition (2) is used as a definition of relative perfection \cite[(3.1)]{MR4604981}\footnote{Here we replaced the pseudo-coherence condition in \cite[(3.1)]{MR4604981} with the condition $\cF \in D^-(X)$ as above.}.
    Then \cite[Corollary 4.2]{MR4604981} and the subsequent remark establish the equivalence between their definition and Illusie's one when $f$ is \emph{pseudo-coherent}, which is the case when $f$ is of finite type between noetherian schemes \cite[\href{https://stacks.math.columbia.edu/tag/0684}{Tag 0684}]{stacks-project}.
    In particular, taking $\cG = \cO_Y$ in (2) shows $\cF \in D^b(X)$.
\end{proof}

Relative perfection is compatible with
proper push-forward
in the following sense.
\begin{proposition}[{\cite[Proposition 4.8]{zbMATH03363721}}]\label{prop:proper-push-forward-of-f-perfect}
    Let
    \[
        \begin{tikzcd}
            X \ar[rr, "f"] \ar[rd, "g"']& & Y \ar[ld, "h"]\\
            & S &
        \end{tikzcd}
    \]
    be a diagram of noetherian schemes and morphisms of finite type.
    If $f$ is proper, then $f_*$ maps $g$-perfect complexes to $h$-perfect complexes.
\end{proposition}

\subsection{Dehn twists, half twists, and mapping class groups of surfaces}\label{subsection:Dehn-twists-and-mapping-class-groups}
Let $\Sigma$ be a compact oriented real surface with (or without) boundary $\partial \Sigma$
and a set of finite marked points $P \subset \Sigma \setminus \partial \Sigma$.
An \emph{ambient isotopy} (or just \emph{isotopy} by abuse of notation) of $(\Sigma, P)$ is
a homotopy $H \colon \Sigma \times [0, 1] \to \Sigma$ through homeomorphisms fixing $P \cup \partial \Sigma$ pointwise.

\begin{definition}
    The \emph{extended mapping class group} $\MCG^{\pm}(\Sigma, P)$ of $\Sigma$ with marked points $P$ is
    the group of (ambient) isotopy classes of all homeomorphisms of $\Sigma$
    fixing $\partial \Sigma$ pointwise
    and preserving $P$ as a set.
    The \emph{mapping class group} $\MCG(\Sigma, P)$ is
    the subgroup of $\MCG^{\pm}(\Sigma, P)$ consisting of all orientation-preserving homeomorphisms (modulo isotopy).
\end{definition}
\begin{remark}
    The pair $(\Sigma, P)$ is sometimes identified with the \emph{surface with punctures} $\Sigma \setminus P$.
    With this identification, the mapping class group $\MCG(\Sigma, P)$ is often denoted by $\MCG(\Sigma \setminus P)$.
\end{remark}

The most fundamental ways to construct mapping classes are \emph{Dehn twists} and \emph{half twists}.
Let us recall these notions.

A \emph{closed curve} or a \emph{loop} on $\Sigma$ is an immersion $\gamma \colon S^1 \to \Sigma$ whose image lies inside the interior of $\Sigma \setminus P$.
An \emph{arc} on $\Sigma$ is an immersion $\gamma \colon [0, 1] \to \Sigma$ such that the image of the open interval $(0, 1)$ is in the interior of $\Sigma \setminus P$, and $\gamma(0), \gamma(1) \in P$.
A \emph{curve} on $\Sigma$ is either a loop or an arc.
A curve is said to be \emph{simple} if it has no self-intersections.
A simple loop is said to be \emph{non-separating} if $\Sigma \setminus \Image(\gamma)$ is connected.

\begin{remark}
    We often identify a curve $\gamma$ with its image $\Image(\gamma)$.
\end{remark}

Let $\gamma \colon S^1 \to \Sigma$ be a simple closed curve and let $N \subset \Sigma$ be a tubular neighborhood of $\gamma$.
We choose an orientation-preserving homeomorphism $i \colon S^1 \times [0, 1]\xrightarrow{\sim} N$ such that $i \vert_{S^1 \times \{1/2\} } = \gamma$.
Then the \emph{Dehn twist along $\gamma$} is a homeomorphism $t_\gamma \colon \Sigma \to \Sigma$ (or its mapping class) defined by
\begin{equation}
    t_\gamma(x) =
    \begin{cases}
        i(z \cdot e^{2\pi \sqrt{-1} s}, s) & (x = i(z, s) \in N),        \\
        x                                  & (x \in \Sigma \setminus N),
    \end{cases}
\end{equation}
where $z \in S^1 = \{z \in \bC \mid |z| = 1\}$ and $s \in [0, 1]$ (see Figure \ref{fig:Dehn-twist}).
Although the map $t_\gamma$ depends on the choice of $N$ and $i$, its mapping class depends only on the isotopy class of $\gamma$.

\begin{figure}[h]
    \centering
    \begin{displaymath}
        \begin{tikzpicture}[scale=0.8]
            \draw (0,0+5)--(6,0+5);
            \draw (0,3+5)--(6,3+5);

            \draw[thick] (0.5, 1.5+5)--(6.5, 1.5+5);

            \draw (0, 1.5+5) circle[x radius=0.5, y radius=1.5];

            \draw(6, 0+5)arc(-90:90:0.5 and 1.5);
            \draw[dashed](6, 3+5)arc(90:270:0.5 and 1.5);

            \draw[thick](3, 0+5)arc(-90:90:0.5 and 1.5);
            \draw[thick, dashed](3, 3+5)arc(90:270:0.5 and 1.5);

            \draw(3, 3+5) node[above]{$\gamma$};

            \draw(3, -0.5+5) node[below]{\Large$\downarrow$};

            \draw (0,0)--(6,0);
            \draw (0,3)--(6,3);

            \draw (0, 1.5) circle[x radius=0.5, y radius=1.5];

            \draw(6, 0)arc(-90:90:0.5 and 1.5);
            \draw[dashed](6, 3)arc(90:270:0.5 and 1.5);

            \draw[thick](0.5, 1.5) to[out=0,in=100](2.5, 0);
            \draw[thick, dashed](2.5, 0) to[out=70,in=-120](3.5, 3);
            \draw[thick](3.5, 3) to[out=-60,in=180](6.5, 1.5);
        \end{tikzpicture}
    \end{displaymath}
    \caption{The Dehn twist $t_\gamma$ along a simple loop $\gamma$.}
    \label{fig:Dehn-twist}
\end{figure}
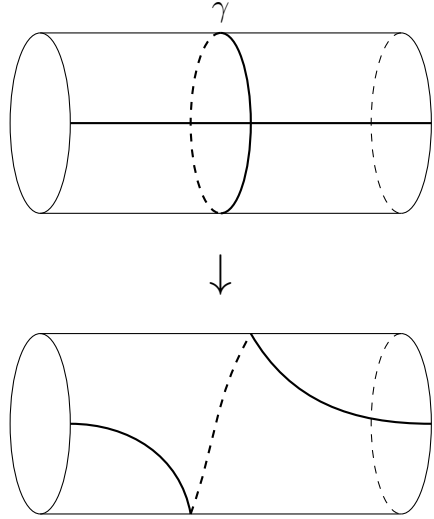

Similarly, the half twist along a simple arc is defined as follows. Given a simple arc $\gamma$ on $\Sigma$, choose an orientation-preserving embedding $i \colon D \to \Sigma$ of the unit disk $D = \{z \in \bC \mid |z| \leq 1\}$ satisfying $i(t - 1/2) = \gamma(t)$ for all $t \in [0, 1] \subset D$.
Then we define the \emph{half twist along $\gamma$} to be the homeomorphism $h_\gamma \colon \Sigma \to \Sigma$ (or its mapping class) satisfying
\begin{equation}
    h_\gamma(x) = \begin{cases}
        i(z \cdot e^{2\pi \sqrt{-1} |z|}) & (x = i(z) \in i(D)),           \\
        x                                 & (x \in \Sigma \setminus i(D)),
    \end{cases}
\end{equation}
which is illustrated in Figure \ref{fig:half-twist}.
Its mapping class is also determined by the isotopy class of $\gamma$.

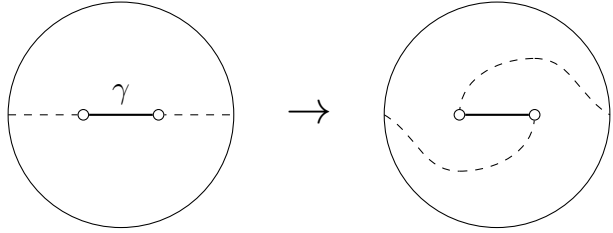
\begin{figure}[h]
    \centering
    \begin{displaymath}
        \begin{tikzpicture}
            \draw (0, 1.5) circle[radius=1.5];
            \draw[thick] (-0.5, 1.5)--(0.5, 1.5);
            \draw[dashed] (-1.5, 1.5)--(-0.5, 1.5);
            \draw[dashed] (0.5, 1.5)--(1.5, 1.5);

            \foreach \u in {-0.5, 0.5}
                {
                    \filldraw[white] (\u, 1.5) circle (2pt);
                    \draw[black] (\u, 1.5) circle (2pt);
                }
            \draw(0, 1.5) node[above]{$\gamma$};

            \draw(2.5, 1.5) node{\Large$\rightarrow$};

            \draw (0+5, 1.5) circle[radius=1.5];
            \draw[thick] (-0.5+5, 1.5)--(0.5+5, 1.5);

            \draw[dashed](-1.5+5, 1.5) to[out=-30,in=180](-0.5+5, 0.75);
            \draw[dashed](-0.5+5, 0.75) to[out=0,in=-90](0.5+5, 1.5);

            \draw[dashed](-0.5+5, 1.5) to[out=90,in=180](0.5+5, 2.25);
            \draw[dashed](0.5+5, 2.25) to[out=0,in=150](1.5+5, 1.5);

            \foreach \u in {-0.5, 0.5}
                {
                    \filldraw[white] (\u+5, 1.5) circle (2pt);
                    \draw[black] (\u+5, 1.5) circle (2pt);
                }
        \end{tikzpicture}
    \end{displaymath}
    \caption{The half twist $h_\gamma$ along a simple arc $\gamma$.}
    \label{fig:half-twist}
\end{figure}

\subsection{The actions of mapping class groups on the fundamental groups}
Let $\Sigma$, $\partial \Sigma$, and $P$ be as above.
Assume $\Sigma$ is connected, and fix a base point $x \in \Sigma \setminus (P \cup \partial \Sigma)$.
A homeomorphism $f \colon \Sigma \to \Sigma$ fixing $P$ as a set induces an isomorphism $f_* \colon \pi_1(\Sigma \setminus P, x) \to \pi_1(\Sigma \setminus P, f(x)), $ which in turn induces an automorphism $[E] \mapsto [\gamma_{-1} * f(E) * \gamma]$ of $\pi_1(\Sigma \setminus P, x) = \pi_1(\Sigma \setminus P)$ by choosing a path $\gamma$ from $x$ to $f(x)$.
The class of this automorphism in $\Out(\pi_1(\Sigma \setminus P)) \coloneqq \Aut(\pi_1(\Sigma \setminus P))/\Inn(\pi_1(\Sigma \setminus P))$, where $\Inn(\pi_1(\Sigma \setminus P))$ is the inner automorphism group of $\pi_1(\Sigma \setminus P)$, does not depend on the choice of $\gamma$, and the resulting group homomorphism will be denoted by
\begin{equation}
    \sigma \colon \MCG^{\pm}(\Sigma, P) \to \Out(\pi_1(\Sigma \setminus P)).
\end{equation}
There is the following remarkable theorem for this map.
\begin{theorem}[Dehn--Nielsen--Baer theorem, {\cite[Theorem 8.8]{MR2850125}}]\label{thm:Dehn--Nielsen--Baer}
    Let $\Out^\star(\pi_1(\Sigma \setminus P))$ be the subgroup of $\Out(\pi_1(\Sigma \setminus P))$
    consisting of the elements that preserve the set of conjugacy classes of the simple closed curves surrounding each point in $P$.
    If $\partial \Sigma = \emptyset$ and $\chi(\Sigma \setminus P)<0$, then the natural map
    \begin{equation}
        \sigma \colon \MCG^{\pm}(\Sigma, P) \to \Out^\star(\pi_1(\Sigma \setminus P))
    \end{equation}
    is an isomorphism.
\end{theorem}
We will use the injectivity of $\sigma$ to identify certain elements of mapping class groups.

\section{Fourier--Mukai transforms and twist functors}\label{section:fourier-mukai-transforms-and-twist-functors}
In this section, we briefly review (relative) Fourier--Mukai transforms and twist functors.
Although the content presented here does not contain new results, for the reader's convenience we have collected the basic facts in sufficiently general settings for our purpose.
We refer the reader to \cite{MR2238172, MR2505443} for relative Fourier--Mukai transforms and \cite{MR1831820, MR3692883} for twist functors.
\subsection{Fourier--Mukai transforms}\label{subsection:fourier-mukai-transforms}
Let $T$, $X$, and $Y$ be schemes.
Let\ $p \colon X \to T$ and $q \colon Y \to T$ be flat and separated morphisms.
For an object $\cP \in D_{qc}(X \times_T Y)$, we define the \emph{(relative) integral functor $\Phi_{\cP} = \Phi^{X \to Y}_P \colon D_{qc}(X) \to D_{qc}(Y)$ with kernel $\cP$} by
\begin{equation}
    \Phi_{\cP}(-) = \pi_{Y*}(\pi_{X}^*(-) \otimes \cP),
\end{equation}
where $\pi_X \colon X \times_T Y \to X$ and $\pi_Y \colon X \times_T Y \to Y$ be the natural projections.
If an integral functor $\Phi_{\cP}$ is an equivalence, then we call $\Phi_{\cP}$ a \emph{Fourier--Mukai transform} and $\cP$ a \emph{Fourier--Mukai kernel}.

\begin{example}
    \begin{enumerate}
        \item Let $f \colon X \to Y$ be a morphism of $T$-schemes. Let $\Gamma \subset X \times_T Y$ be the graph of $f$ and $\overline{\Gamma} \subset Y \times_T X$ be its transpose. Then we have $f_* \cong \Phi_{\cO_{\Gamma}} \colon D_{qc}(X) \to D_{qc}(Y)$ and $f^* \cong \Phi_{\cO_{\overline{\Gamma}}} \colon D_{qc}(Y) \to D_{qc}(X)$.
        \item Let $\cL \in \Pic(X)$ be a line bundle on $X$ and $\Delta = \Delta_T \colon X \to X \times_T X$ be the diagonal morphism. Then we have $\cL \otimes (-) \cong \Phi_{\Delta_*\cL} \colon D_{qc}(X) \to D_{qc}(X)$.
    \end{enumerate}
\end{example}

The composition $\Phi_{\cQ} \circ \Phi_{\cP} \colon D_{qc}(X) \to D_{qc}(Z)$ of two integral functors $\Phi_{\cP} \colon D_{qc}(X) \to D_{qc}(Y)$ and $\Phi_{\cQ} \colon D_{qc}(Y) \to D_{qc}(Z)$  is isomorphic to an integral functor $\Phi_{\cQ * \cP}$ \cite[\href{https://stacks.math.columbia.edu/tag/0FYS}{Tag 0FYS}]{stacks-project}, where the kernel $\cQ * \cP \in D_{qc}(X \times_T Z)$ is the \emph{convolution} of $\cP$ and $\cQ$ defined as follows:
\begin{equation}
    \cQ * \cP = \pi_{XZ*}(\pi_{XY}^*\cP \otimes \pi_{YZ}^*\cQ).
\end{equation}
Here $\pi_{XY}$, $\pi_{YZ}$, and $\pi_{XZ}$ are the natural projections from $X \times_T Y \times_T Z$ to $X \times_T Y$, $Y \times_T Z$, and $X \times_T Z$, respectively.

The adjoint functors to an integral functor are also given by integral functors under a mild finiteness assumption.
\begin{theorem}[{\cite[Theorem 1.1]{MR3720794}}]\label{thm:adjoint-to-integral-functors}
    Let $T$ be a noetherian scheme.
    Let $p \colon X \to T$ and $q \colon Y \to T$ be flat and separated morphisms of finite type, and let $\cP \in D_{qc}(X \times_T Y)$.
    \begin{enumerate}
        \item Suppose that $Y$ is quasi-projective over $T$. If $\cP$ is $\pi_X$-perfect and has proper support over $X$, then the integral functor $\Phi_{\cP_L} \colon D_{qc}(Y) \to D_{qc}(X)$ with the kernel $\cP_L = \CHom(\cP, \pi_X^!\cO_X)$ is the left adjoint functor to $\Phi_{\cP}$.
        \item Suppose that $X$ is quasi-projective over $T$. If $\cP$ is $\pi_Y$-perfect and has proper support over $Y$, then the integral functor $\Phi_{\cP_R} \colon D_{qc}(Y) \to D_{qc}(X)$ with the kernel $\cP_R = \CHom(\cP, \pi_Y^!\cO_Y)$ is the right adjoint functor to $\Phi_{\cP} $.
    \end{enumerate}
\end{theorem}

We will mainly focus on the case where $X = Y$, $p = q$, and $\Phi_{\cP}$ is an equivalence (i.e.~a Fourier--Mukai transform).
Let us introduce the group of invertible integral kernels $\FM_T(X)$.
By the projection formula we have $\cP * \cO_{\Delta} \cong \cP \cong \cO_{\Delta} * \cP$ for the structure sheaf $\cO_{\Delta} = \Delta_{T*}\cO_X$ of the diagonal $\Delta_T \colon X \hookrightarrow X \times_T X$ and any complex $\cP \in D_{qc}(X \times_T X)$.
This means that $\cO_{\Delta}$ behaves as the unit element for the convolution.
We can also check that the convolution is associative. 
Based on these observations we define:
\begin{definition}
    Let $X \to T$ be a flat and separated morphism.
    The group of Fourier--Mukai kernels $\FM_T(X)$ is
    \begin{equation}
        \FM_T(X) \coloneqq \{\cP \in D_{qc}(X \times_T X) \mid \text{$\cP$ is invertible w.r.t.~ the convolution}\}/\cong
    \end{equation}
    with the group operation being the convolution.
\end{definition}

There is a natural homomorphism to the group of autoequivalences of $D_{qc}(X)$
\begin{equation}\label{eq:FM-to-Auteq}
    \FM_T(X) \to \Auteq(D_{qc}(X)), \quad \cP \mapsto \Phi_{\cP}
\end{equation}
by definition.
In a nice situation, the functor $\Phi_{\cP}$ restricts to an autoequivalence of the full subcategory $D^b(X) \subset D_{qc}(X)$ and the map $\cP \mapsto \Phi_{\cP}$ is injective by the following arguments.
\begin{lemma}\label{lem:autoequivalence-preserve-D^b}
    Let $X$ be a separated noetherian scheme.
    Then every autoequivalence $\Phi \in \Auteq D_{qc}(X)$ preserves the full subcategory $D^b(X) \subset D_{qc}(X)$.
\end{lemma}
\begin{proof}
    The categorical characterization \cite[Proposition 6.9]{MR2434186} of the subcategory $D^b_{qc}(X) \subset D_{qc}(X)$ implies $\Phi(D^b_{qc}(X)) = D^b_{qc}(X)$.
    The induced autoequivalence $\Phi \colon D^b_{qc}(X) \xrightarrow{\sim} D^b_{qc}(X)$ preserves the subcategory of compact objects $D^b_{qc}(X)^c$, which coincides with $D^b(X)$ by \cite[Corollary 6.17]{MR2434186}.
    See also the discussion in \cite[Proposition 7.4]{MR3861804}.
\end{proof}
\begin{lemma}\label{lem:uniqueness-of-FM-kernel}
    Let $k$ be a field and $X$ be a quasi-projective scheme over $k$.
    Then for every Fourier--Mukai transform $\Phi_{\cP} \colon D^b(X) \xrightarrow{\sim} D^b(X)$ the kernel $\cP \in D_{qc}(X \times_k X)$ is unique up to isomorphism.
\end{lemma}
\begin{proof}
    Let $\cP, \cQ \in D_{qc}(X \times_k X)$ be two Fourier--Mukai kernels whose Fourier--Mukai transforms on $D^b(X)$ are isomorphic:
    \begin{equation}
        \Phi_{\cP} \cong \Phi_{\cQ} \colon D^b(X) \xrightarrow{\sim} D^b(X).
    \end{equation}
    It yields an isomorphism of functors
    \begin{equation}
        \Phi_{\cP}\vert_{\Perf(X)} \cong \Phi_{\cQ}\vert_{\Perf(X)} \colon \Perf(X) \to D_{qc}(X)
    \end{equation}
    by restricting the source and extending the target.
    In addition, for a fixed ample line bundle $\cO_X(1)$, $\Phi_{\cP}$ satisfies the condition
    \begin{eqnarray}
        \Ext^j(\Phi_{\cP}(\cO_X(n)), \Phi_{\cP}(\cO_X(m))) = 0
    \end{eqnarray}
    for all $n, m \in \bZ$ and $j < 0$, as $\Phi_{\cP}$ is an equivalence.
    Then we can apply \cite[Theorem 1.5]{MR3556457} to deduce that $\cP \cong \cQ$.
\end{proof}
\begin{corollary}\label{cor:FM-to-Auteq}
    Let $k$ be a field and $X$ be a quasi-projective scheme over $k$.
    Then there is an injective group homomorphism
    \begin{equation}
        \FM_k(X) \hookrightarrow \Auteq(D^b(X)), \quad \cP \mapsto \Phi_{\cP}.
    \end{equation}
\end{corollary}
\begin{proof}
    By Lemma \ref{lem:autoequivalence-preserve-D^b} there is a well-defined group homomorphism
    \begin{equation}
        \Auteq(D_{qc}(X)) \to \Auteq(D^b(X)), \quad \Phi \mapsto \Phi\vert_{D^b(X)}.
    \end{equation}
    It induces a group homomorphism $\FM_k(X) \to \Auteq(D_{qc}(X)) \to \Auteq(D^b(X))$, which is injective by Lemma \ref{lem:uniqueness-of-FM-kernel}.
\end{proof}

The next proposition describes the behavior of Fourier--Mukai transforms when we forget the base.
\begin{proposition}\label{prpo:forget-base}
    Let $X \to T$ and $T \to S$ be flat and separated morphisms.
    Then there is an injective group homomorphism
    \begin{equation}
        \FM_T(X) \hookrightarrow \FM_S(X), \quad \cP \mapsto \iota_* \cP,
    \end{equation}
    where $\iota \colon X \times_T X \to X \times_S X$ is the natural morphism.
    In addition, the following diagram is commutative:
    \[
        \begin{tikzcd}
            \FM_T(X) \ar[r, "\iota_*"] \ar[rd, out=-90, in=180, "\Phi_{(-)}"'] & \FM_S(X) \ar[d, "\Phi_{(-)}"] \\
            & \Auteq(D_{qc}(X)).
        \end{tikzcd}
    \]
\end{proposition}
\begin{proof}
    First of all, the natural morphism $\iota \colon X \times_T X \to X \times_S X$ is a closed immersion because there is a cartesian diagram
    \[
        \begin{tikzcd}
            X \times_T X \ar[r] \ar[d, "\iota"'] & T \ar[d, "\text{diagonal}"]\\
            X \times_S X \ar[r] & T \times_S T
        \end{tikzcd}
    \]
    and $T \to S$ is separated.

    Let $\Delta_T \colon X \to X \times_T X$ and $\Delta_S \colon X \to X \times_S X$ be the diagonal morphisms.
    For every $\cP, \cQ \in D_{qc}(X \times_T X)$ we have $\iota_*(\cP * \cQ) \cong (\iota_* \cP) * (\iota_* \cQ)$ by the projection formula. We also have $\iota_* \Delta_{T*}\cO_X \cong \Delta_{S*}\cO_X$.
    Thus, there is a well-defined group homomorphism  $\FM_T(X) \to \FM_S(X), \cP \mapsto \iota_* \cP$.
    To show the injectivity, let $\cP \in \FM_T(X)$ and assume $\iota_* \cP \cong \Delta_{S*}\cO_X$.
    Since $\iota$ is a closed immersion, the isomorphism $\iota_* \cP \cong \Delta_{S*}\cO_X \cong \iota_*\Delta_{T*}\cO_X$ implies that $\cP$ has non-vanishing cohomology sheaves only in degree zero and hence $\cP \cong \Delta_{T*}\cO_X$, which means that the homomorphism is injective.

    The commutativity of the last diagram follows from the projection formula.
\end{proof}

The following result says that relative Fourier--Mukai transforms are compatible with base change in some sense.
In particular, we can ``restrict'' them to (the derived category of) each fiber of $X \to T$.
This construction has appeared in the literature such as \cite{MR1972146, MR2238172, MR2505443, MR2593258} and will play a central role in our arguments.
We give a formulation that applies to our setting.
\begin{proposition}\label{prop:restriction-to-fiber}
    Let $X \to T$ be a flat and separated morphism, $T' \to T$ be an arbitrary morphism, and $X' = X \times_T T'$ be the base change.
    Let $f \colon X' \to X$ and $\varphi \colon X' \times_{T'} X' \to X \times_T X$ be the natural morphisms.
    Then we have a group homomorphism
    \begin{equation}
        \FM_T(X) \to \FM_{T'}(X'), \quad \cP \mapsto \varphi^* \cP.
    \end{equation}
    Moreover, the Fourier--Mukai transforms $\Phi_{\cP}$ and $\Phi_{\varphi^* \cP}$ fit into the following commutative diagram:
    \begin{equation}\label{diagram:restriction-to-fiber}
        \begin{tikzcd}
            D_{qc}(X') \ar[r, "f_*"] \ar[d, "\Phi_{\varphi^* \cP}"'] & D_{qc}(X) \ar[r, "f^*"]\ar[d, "\Phi_{\cP}"] & D_{qc}(X') \ar[d,  "\Phi_{\varphi^* \cP}"]\\
            D_{qc}(X') \ar[r, "f_*"'] & D_{qc}(X) \ar[r, "f^*"'] & D_{qc}(X').
        \end{tikzcd}
    \end{equation}
\end{proposition}
\begin{proof}
    Let $\Delta_T \colon X \to X \times_T X$ and $\Delta_{T'} \colon X' \to X' \times_{T'} X'$ be the diagonal morphisms.
    We need to show that
    \begin{enumerate}
        \item $(\varphi^* \cP) * (\varphi^* \cQ) \cong \varphi^*(\cP * \cQ)$ for $\cP, \cQ \in D_{qc}(X \times_T X)$ and
        \item $\varphi^*\Delta_{T*}\cO_X \cong \Delta_{T'*}\cO_{X'}$.
    \end{enumerate}
    Let $\psi \colon X' \times_{T'} X' \times_{T'} X' \to X \times_T X \times_T X$ be the natural morphism.
    For $i, j \in \{1, 2, 3\}$ let $\pi_{ij} \colon X \times_T X \times_T X \to X \times_T X$ and $p_{ij} \colon X' \times_{T'} X' \times_{T'} X' \to X' \times_{T'} X'$ be the natural projections to the $(i, j)$ components.
    Then for $\cP, \cQ \in D_{qc}(X \times_T X)$ we have
    \begin{align}
        (\varphi^* \cP) * (\varphi^* \cQ) & =p_{13*}(p_{12}^* \varphi^* \cP \otimes p_{23}^* \varphi^* \cQ)   \\
                                          & \cong p_{13*}(\psi^*\pi_{12}^* \cP \otimes \psi^* \pi_{23}^* \cQ) \\
                                          & \cong p_{13*}\psi^*(\pi_{12}^* \cP \otimes \pi_{23}^* \cQ)        \\
                                          & \cong \varphi^*\pi_{13*}(\pi_{12}^* \cP \otimes \pi_{23}^* \cQ)   \\
                                          & = \varphi^*(\cP * \cQ).
    \end{align}
    Here $p_{13*}\psi^* \cong \varphi^*\pi_{13*}$ holds by Theorem \ref{thm:tor-independent-base-change-theorem}, since the cartesian square of the diagram
    \[
        \begin{tikzcd}
            X' \times_{T'} X' \times_{T'} X' \ar[r, "\psi"] \ar[d, "p_{13}"] & X \times_{T} X \times_{T} X \ar[d, "\pi_{13}"] \\
            X' \times_{T'} X' \ar[r, "\varphi"]& X \times_{T} X
        \end{tikzcd}
    \]
    is tor-independent as $\pi_{13}$ is flat.
    In addition, applying \cite[Lemma 2.25]{MR2238172} to the diagram
    \[
        \begin{tikzcd}
            X' \ar[r, "f"] \ar[d, "\Delta_{T'}"] &  X \ar[d, "\Delta_T"] \\
            X' \times_{T'} X' \ar[r, "\varphi"]\ar[d] & X \times_{T} X \ar[d]\\
            T' \ar[r] &T
        \end{tikzcd}
    \]
    we can show the upper square is tor-independent, and hence (2) holds.
    Finally, the commutativity of the last diagram \eqref{diagram:restriction-to-fiber} is a consequence of a standard argument using the projection formula
    and flat base change. See \cite[Lemma 2.41]{MR2238172}.
\end{proof}

\begin{example}\label{ex:relative-fourier-mukai}
    Let $X \to T$ be a flat family of varieties and $X_0$ be the fiber at a closed point $0 \in T$.
    \begin{enumerate}
        \item Let $f \colon X \to X$ be an automorphism of $X$ over $T$.
              It induces an automorphism $f_0 \colon X_0 \to X_0$ of $X_0$.
              The functors $f_*$ and $f^*$ are relative integral functors with respect to $T$. Their restrictions to the fiber are $f_{0*}$ and $f_0^*$, respectively.
        \item Let $\cL$ be a line bundle on $X$.
              The autoequivalence $\cL \otimes (-)$ of $D^b(X)$ is a relative integral functor with respect to $T$, and its restriction to the fiber is $\cL\vert_{X_0} \otimes (-)$.
    \end{enumerate}
\end{example}

We present a criterion to determine when an object $\cP \in D_{qc}(X \times_T X)$ belongs to $\FM_T(X)$.
Roughly speaking, this criterion says it is enough to check that $\Phi_{\cP}$ is an equivalence and whose left (or, equivalently, right) adjoint is also an integral functor.
It will be used in combination with Theorem \ref{thm:adjoint-to-integral-functors}.
\begin{lemma}\label{lem:criterion-for-invertible-kernel}
    Let $X \to T \to \Spec k$ be morphisms of schemes in which
    \begin{itemize}
        \item $k$ is a field,
        \item $T \to \Spec k$ is separated,
        \item $X \to T$ is flat, and
        \item $X \to \Spec k$ is quasi-projective.
    \end{itemize}
    Let $\cP \in D_{qc}(X \times_T X)$ be an object and suppose that $\Phi_{\cP} \colon D_{qc}(X) \to D_{qc}(X)$ has a left (or right) adjoint that is also an integral functor $\Phi_{\cQ}$ for $\cQ \in D_{qc}(X \times_T X)$.
    Then the following are equivalent:
    \begin{enumerate}
        \item $\cP$ is an element of $\FM_T(X)$.
        \item $\Phi_{\cP} \colon D_{qc}(X) \to D_{qc}(X)$ is an equivalence.
        \item $\Phi_{\cP}$ preserves $D^b(X) \subset D_{qc}(X)$ and $\Phi_{\cP} \colon D^b(X) \to D^b(X)$ is an equivalence.
    \end{enumerate}
\end{lemma}
\begin{proof}
    (1) $\Rightarrow$ (2) is clear.
    (2) $\Rightarrow$ (3) follows from Corollary \ref{cor:FM-to-Auteq}.

    We show (3) $\Rightarrow$ (1).
    Note that both $X \to T$ and $T \to \Spec k$ are flat and separated.
    We only discuss the case when $\Phi_{\cQ}$ is the left adjoint to $\Phi_{\cP}$ (since the right adjoint case is similar).
    Let $\iota \colon X \times_T X \hookrightarrow X \times_k X$ be the natural inclusion, $\Delta_T \colon X \to X \times_T X$ and $\Delta_k \colon X \to X \times_k X$ be the diagonal morphisms.
    By the assumption and the proof of Proposition \ref{prpo:forget-base} we have
    \begin{equation}
        \Phi_{\iota_*(\cP * \cQ)} \cong \Phi_{\iota_* \cP} \circ \Phi_{\iota_* \cQ} \cong \id \cong \Phi_{\Delta_{k*}\cO_X} \cong \Phi_{\iota_*\Delta_{T*}\cO_X} \colon D^b(X) \to D^b(X).
    \end{equation}
    Then $\iota_*(\cP * \cQ) \cong \iota_*\Delta_{T*}\cO_X$ holds by Lemma \ref{lem:uniqueness-of-FM-kernel}.
    Since $\iota$ is a closed immersion  and $\Delta_{T*}\cO_X$ is a sheaf, we have $\cP * \cQ \cong \Delta_{T*}\cO_X$.
    By symmetry $\cQ * \cP \cong \Delta_{T*}\cO_X$ also holds.
    Then we have $\cP \in \FM_T(X)$.
\end{proof}

Finally, we close this subsection by giving the following useful lemma.
It is well-known for smooth projective cases (e.g.~{\cite[(3.3)]{MR3713877}}), and also known for projective but not necessarily smooth cases {\cite[Lemma 2.14]{MR2155085}}.
We give a statement for quasi-projective and not necessarily smooth cases.

\begin{lemma}\label{lem:criterion-to-be-standard-functor}
    Let $k$ be an algebraically closed field with characteristic zero.
    Let $X$ and $Y$ be connected quasi-projective schemes over $k$, with $X$ reduced.
    Let $\cP \in D^b(X \times_k Y)$ be an object whose support is proper over $X$.
    Suppose that the integral functor $\Phi = \Phi_{\cP}$ satisfies the following condition:

    \begin{quote}
        For any closed point $x \in X$, there exists a closed point $y \in Y$ and an integer $n_x$ such that $\Phi(\cO_x) \cong \cO_y[n_x]$.
    \end{quote}

    Then there exists a morphism $f \colon X \to Y$, a line bundle $\cL \in \Pic X$, and an integer $n$ such that $\Phi \cong f_*(\cL \otimes -)[n]$.
\end{lemma}
\begin{proof}
    See Appendix \ref{subsection:proof-of-criterion-to-be-standard-functor}.
\end{proof}

\subsection{Spherical objects and twist functors}
In \cite{MR1831820}, Seidel and Thomas introduced the notion of spherical objects of derived categories and associated autoequivalences called twist functors.
This construction is an analog or a counterpart of Dehn twists along Lagrangian spheres in symplectic geometry, through homological mirror symmetry.

Let $X$ be a quasi-projective Gorenstein scheme over a field $k$ and $f \colon X \to \Spec k$ be the structure morphism.
Let $\Delta_k \colon X \to X \times_k X$ be the diagonal morphism.
We assume $X$ to be connected (or more generally equidimensional) with $\dim X = d$.
Under these assumptions the dualizing complex $f^!\cO_{\Spec k}$ is of the form $\omega_X[-d]$ for some line bundle $\omega_X$ on $X$, which we call the \emph{canonical bundle} of $X$.
\begin{definition}[{\cite{MR1831820}}]
    We say $\cE \in D^b(X)$ is a \emph{($d$-)spherical object} if
    \begin{enumerate}
        \item $\cE$ is perfect and has proper support,
        \item $\cE \otimes \omega_X \cong \cE$, and
        \item $\Ext^*_X(\cE, \cE) \cong k \oplus k[-d]$.
    \end{enumerate}
\end{definition}

\begin{theorem}[{\cite{MR1831820}}]
    Let $\cE \in D^b(X)$ be a spherical object.
    Let $T_{\cE} = \Phi_{\cP_{\cE}}$ be the integral functor with the kernel
    \begin{equation}
        \cP_{\cE} = \Cone(\cE^\vee \boxtimes \cE \xrightarrow{\ev} \Delta_{k*}\cO_X),
    \end{equation}
    where the natural ``evaluation on diagonal'' morphism $\ev$ is the composition of
    \begin{enumerate}
        \item the restriction $\cE^\vee \boxtimes \cE \to \Delta_{k*}\Delta_k^*(\cE^\vee \boxtimes \cE) \cong \Delta_{k*}(\cE^\vee \otimes \cE)$ and
        \item $\Delta_{k*}(\cE^\vee \otimes \cE) \to \Delta_{k*}\cO_X$ induced by the natural evaluation $\cE^\vee \otimes \cE \to \cO_X$.
    \end{enumerate}
    Then $T_{\cE}$ is an autoequivalence of $D^b(X)$.
\end{theorem}
The autoequivalence $T_{\cE}$ is called the \emph{twist functor (or spherical twist) along $\cE$}.
We remark that there exists an exact triangle
\begin{equation}\label{eq:exact-triangle-of-spherical-twitst}
    \Ext^*_X(\cE, \cF)\otimes_k \cE \to \cF \to T_{\cE}(\cF) \xrightarrow{+1}
\end{equation}
for every $\cF \in D^b(X)$.
As an immediate consequence of this triangle, we have the following:
\begin{corollary}\label{cor:orthogonal-to-spherical}
    Let $\cF \in \langle \cE \rangle^\perp = \{\cF \mid \Ext^*_X(\cE, \cF) = 0\} \subset D^b(X)$ be an object that is right orthogonal to $\cE$ (e.g. $\Supp \cF \cap \Supp \cE = \emptyset$).
    Then $T_{\cE}(\cF) \cong \cF$.
\end{corollary}
\begin{example}[{\cite{MR1831820}}]\label{ex:spherical-objects}
    \begin{enumerate}
        \item Let $C$ be a (Gorenstein) curve. Then for any smooth point $x \in C$, its structure sheaf $\cO_x$ is spherical. There is an isomorphism $T_{\cO_x} \cong (-) \otimes \cO(x)$.
        \item Let $X$ be a strict Calabi--Yau variety, i.e.~a smooth projective variety whose canonical bundle is trivial and satisfying $H^i(X, \cO)=0$ for $0 < i < \dim X$. Then any line bundle on $X$ is spherical.
        \item Let $S$ be a smooth surface and $G \subset S$ be a $(-2)$-curve. Then the sheaf $\cO_G(a)$, the push-forward of the line bundle on $G \cong \bP^1$ with degree $a \in \bZ$, is spherical.
        \item Let $X$ be a smooth threefold and $C \subset X$ be a $(-1, -1)$-curve, i.e.~a smooth rational curve with normal bundle $\cN_{C/X} \cong \cO_{\bP^1}(-1)\oplus\cO_{\bP^1}(-1)$. Then the structure sheaf $\cO_C$ is spherical in $D^b(X)$.
    \end{enumerate}
\end{example}
\begin{proposition}[{\cite[Proposition 3.15]{MR1831820}}]\label{prop:exceptional-to-spherical}
    Let $X$ be a smooth quasi-projective variety and $\iota \colon Y \hookrightarrow X$ be a projective hypersurface with $\iota^*\omega_X$ is trivial.
    If $\cE \in \Perf(Y)$ is an \emph{exceptional object} (i.e.~$\Ext^*_X(\cE, \cE) \cong k$), then $\iota_* \cE \in D^b(X)$ is a spherical object.
\end{proposition}
\begin{example}\label{ex:spherical-object-from-K3-degeneration}
    Let $\pi \colon X \to C$ be a type $\rom{3}$ degeneration of K3 surfaces \cite{Kulikov1977,Persson--Pinkham1981} (over $\bC$).
    It has the following properties by definition.
    \begin{enumerate}
        \item The canonical bundle $\omega_X$ is trivial.
        \item The central fiber $\iota \colon X_0 = \pi^{-1}(0) \hookrightarrow X$ is a reducible surface $X_0 = \bigcup_i V_i$, and each component $V_i$ is a complete rational surface.
    \end{enumerate}
    Additionally, assume that $X$ and $C$ are quasi-projective and each $V_i$ is smooth.
    Then the structure sheaf $\cO_{V_i}$ of $V_i$ is an exceptional object in $\Perf(V_i)$, since the Hodge numbers $h^{0,1}$ and $h^{0,2}$ of a smooth rational surface are zero.
    By the property (1) and Proposition \ref{prop:exceptional-to-spherical}, the structure sheaves $\cO_{V_i}$ are spherical objects in $D^b(X)$.
    The same argument applies to arbitrary line bundles (or exceptional objects) on $V_i$.

\end{example}
The following basic properties of twist functors will be useful.
\begin{lemma}
    Let $\cE, \cE'$ be spherical objects in $D^b(X)$.
    \begin{enumerate}
        \item For any autoequivalence $\Phi \in \Auteq{D^b(X)}$, we have \begin{equation}
                  \Phi \circ T_{\cE} \circ \Phi^{-1} \cong T_{\Phi(\cE)}.
              \end{equation}
        \item If $\Ext^*_X(\cE, \cE') = 0$, then $T_{\cE}T_{\cE'} \cong T_{\cE'}T_{\cE}$.
        \item If $\sum_i \dim \Ext_X^i(\cE, \cE') = 1$, then $T_{\cE}T_{\cE'} T_{\cE} \cong T_{\cE'}T_{\cE}T_{\cE'}$ (braid relation).
    \end{enumerate}
\end{lemma}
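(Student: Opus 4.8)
The statement to prove collects three standard facts about spherical twists; I will treat each part in turn, reducing everything to the kernel-level description of $T_E$ and the conjugation formula, which is the cheapest of the three.

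Proof plan. The cleanest route is to prove part (1) first and then deduce (2) and (3) from it. For part (1), recall that $T_E$ is the integral functor with kernel $Q_E = \Cone(\ev_E)$ where $\ev_E \colon E^\vee \boxtimes E \to \cO_\Delta$. If $\Phi = \Phi^P$ is an equivalence with kernel $P$ and left/right adjoints given by kernels $P_L, P_R$, then $\Phi \circ T_E \circ \Phi^{-1}$ is the integral functor whose kernel is the convolution $P \circledast Q_E \circledast P_R$. The plan is to identify this convolution with $Q_{\Phi(E)} = \Cone(\ev_{\Phi(E)})$. Convolving the defining triangle $E^\vee \boxtimes E \to \cO_\Delta \to Q_E \xrightarrow{+1}$ on both sides by $P$ and $P_R$: the middle term $\cO_\Delta$ is the kernel of the identity, so $P \circledast \cO_\Delta \circledast P_R$ is the kernel of $\Phi \circ \Phi^{-1} \cong \id$, i.e.\ $\cO_\Delta$ again; and one checks (using that $\Phi$ is fully faithful, so $P_R \circledast P \cong \cO_\Delta$, together with the projection/base-change identities for convolution) that $P \circledast (E^\vee \boxtimes E) \circledast P_R \cong \Phi(E)^\vee \boxtimes \Phi(E)$, where the identification $\Phi(E)^\vee \cong \Phi_R(E^\vee)$-type statement uses that the adjoint kernel computes the dual. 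Matching the connecting maps — i.e.\ verifying that the induced map $\Phi(E)^\vee \boxtimes \Phi(E) \to \cO_\Delta$ is indeed $\ev_{\Phi(E)}$ up to a nonzero scalar — then gives $\Phi \circ T_E \circ \Phi^{-1} \cong T_{\Phi(E)}$ after invoking uniqueness of cones (here one uses $\Hom^*(E,E) \cong \bC \oplus \bC[-\dim X]$, so the relevant $\Hom$ space pinning down the evaluation map is one-dimensional). Alternatively, and perhaps more transparently, one can argue functorially: apply $\Phi$ to the defining exact triangle $\Hom^*_X(E,F') \otimes E \to F' \to T_E(F') \xrightarrow{+1}$ with $F' = \Phi^{-1}(F)$, use that $\Phi$ is exact and that $\Hom^*_X(E, \Phi^{-1}F) \cong \Hom^*_X(\Phi E, F)$ by fully-faithfulness, and conclude that $\Phi T_E \Phi^{-1}(F)$ sits in the triangle $\Hom^*_X(\Phi E, F) \otimes \Phi E \to F \to \Phi T_E \Phi^{-1}(F) \xrightarrow{+1}$, which is precisely the triangle defining $T_{\Phi E}(F)$; a naturality check in $F$ upgrades this to an isomorphism of functors.

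For part (2), suppose $\Hom^*_X(E_1, E_2) = 0$. Then automatically $\Hom^*_X(E_2, E_1) = 0$ as well by Serre duality, using $E_i \otimes \omega_X \cong E_i$ (condition (2) of sphericality): $\Hom^*_X(E_2, E_1) \cong \Hom^*_X(E_1, E_2 \otimes \omega_X)^\vee[-\dim X] \cong \Hom^*_X(E_1,E_2)^\vee[-\dim X] = 0$. Since $T_{E_2}$ fixes $E_1$ — indeed the defining triangle for $T_{E_2}(E_1)$ reads $\Hom^*_X(E_2,E_1) \otimes E_2 \to E_1 \to T_{E_2}(E_1)$ and the first term vanishes — we get $T_{E_2}(E_1) \cong E_1$, hence by part (1), $T_{E_2} T_{E_1} T_{E_2}^{-1} \cong T_{T_{E_2}(E_1)} \cong T_{E_1}$, which rearranges to $T_{E_1} T_{E_2} \cong T_{E_2} T_{E_1}$.

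For part (3), assume $\dim \sum_i \Hom^i_X(E_1, E_2) = 1$. The strategy is the braid relation in the form $T_{E_1} T_{E_2}(E_1) \cong E_2$ (up to shift), after which part (1) finishes the job: $(T_{E_1} T_{E_2}) T_{E_1} (T_{E_1} T_{E_2})^{-1} \cong T_{T_{E_1}T_{E_2}(E_1)} \cong T_{E_2}$, i.e.\ $T_{E_1} T_{E_2} T_{E_1} \cong T_{E_2} T_{E_1} T_{E_2}$. To establish $T_{E_1} T_{E_2}(E_1) \cong E_2$ up to shift: first compute $F := T_{E_2}(E_1)$ from the triangle $\Hom^*_X(E_2, E_1) \otimes E_2 \to E_1 \to F \xrightarrow{+1}$. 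Using Serre duality and $E_i \otimes \omega_X \cong E_i$ again, $\Hom^*_X(E_2, E_1)$ is one-dimensional, concentrated in degree $\dim X - k$ where $k$ is the degree of the one-dimensional $\Hom^*_X(E_1,E_2)$; so $F$ is the cone of a map $E_2[-(\dim X - k)] \to E_1$ (or is $E_1$ shifted, depending on which degree, but the nonzero-Hom hypothesis forces the interesting case). Then one computes $\Hom^*_X(E_1, F)$ via the long exact sequence obtained by applying $\Hom^*_X(E_1, -)$ to this triangle, using sphericality of $E_1$ and $E_2$ and the hypothesis $\dim \Hom^*_X(E_1,E_2) = 1$; the key point is that the connecting map in this long exact sequence is an isomorphism on the relevant graded piece (this is where the precise value of the composition $\Hom^*(E_2,E_1) \otimes \Hom^*(E_1,E_2) \to \Hom^*(E_1,E_1)$ enters, and is forced to be an isomorphism by the one-dimensionality and by sphericality of $E_1$), so that $\Hom^*_X(E_1, F)$ is again one-dimensional, concentrated in a single degree. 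Finally, $T_{E_1}(F)$ fits in $\Hom^*_X(E_1,F) \otimes E_1 \to F \to T_{E_1}(F) \xrightarrow{+1}$; tracing through, the map $E_1[\text{shift}] \to F$ is (a scalar multiple of) the structure map $E_1 \to F$ from the triangle defining $F$, whose cone is $\Hom^*_X(E_2,E_1)\otimes E_2[1]$, i.e.\ $E_2$ up to shift. So $T_{E_1}(F) \cong E_2[m]$ for some $m$, and then $T_{T_{E_1}T_{E_2}(E_1)} \cong T_{E_2[m]} \cong T_{E_2}$ since twist functors are insensitive to shifts.

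Expected main obstacle. Part (1) is routine modulo bookkeeping. The genuine work is in part (3): one must pin down not just the dimensions of the relevant $\Hom$-spaces but the fact that certain composition maps (the connecting homomorphisms in the long exact sequences, equivalently the Yoneda products $\Hom^*(E_2,E_1) \otimes \Hom^*(E_1,E_2) \to \Hom^*(E_1,E_1)$ and $\to \Hom^*(E_2,E_2)$) are nonzero, hence isomorphisms onto the one-dimensional pieces. Getting this without circularity requires either a careful diagram chase using the associativity of composition and Serre duality (the duality pairing identifies these products up to the canonical Calabi–Yau-type trace, which is nondegenerate), or else falling back on the original cone-level argument of Seidel–Thomas with kernels; I would present the functorial/triangle argument but flag that the nonvanishing of the composition is the crux and cite \cite{MR1831820} for it if a self-contained verification gets unwieldy.
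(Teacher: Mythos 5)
The paper disposes of this lemma in one line: part (1) ``follows by definition,'' and parts (2) and (3) are cited as part of \cite[Theorem 1.2]{MR1831820}. Your proposal instead gives an actual argument, and it is correct. For part (1) your functorial route — apply $\Phi$ to the defining triangle for $T_E(\Phi^{-1}F)$ and use $\Hom^*_X(E, \Phi^{-1}F) \cong \Hom^*_X(\Phi E, F)$ — is exactly what ``by definition'' is hiding, and the naturality upgrade is routine. For parts (2) and (3) you do something genuinely different from the paper: rather than invoking the kernel-level proof of Seidel--Thomas, you derive both braid-type relations as corollaries of (1) by computing $T_{E_2}(E_1)$ and $T_{E_1}T_{E_2}(E_1)$, reducing (2) to $T_{E_2}(E_1) \cong E_1$ and (3) to $T_{E_1}T_{E_2}(E_1) \cong E_2$ up to shift. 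This conjugation trick is a clean and standard alternative and buys you a uniform treatment of (2) and (3) once (1) is in hand, whereas the Seidel--Thomas proof works directly with cones of evaluation morphisms.

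Two points worth flagging. First, the Serre duality you use in (2) and (3) is not the smooth projective version: $X$ here is quasi-projective Gorenstein, so one needs the Gorenstein form $\Hom^*_X(F,G) \cong \Hom^*_X(G, F \otimes \omega_X)^\vee[-\dim X]$ for objects with proper support; the sphericality conditions (proper support, perfection, and $E_i \otimes \omega_X \cong E_i$) are precisely what make this applicable, and your use of it is correct once this is noted. Second, you correctly identify the crux of part (3) as the nondegeneracy of the Yoneda pairing $\Hom^k(E_1,E_2) \otimes \Hom^{d-k}(E_2,E_1) \to \Hom^d(E_1,E_1)$, which does follow from the compatibility of Serre duality with composition plus sphericality of $E_1$ (the trace $\Hom^d(E_1,E_1) \to \bC$ is an isomorphism on a one-dimensional space). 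You leave this slightly open and offer to fall back on \cite{MR1831820}; that is a reasonable and honest place to leave a proposal, and it is in any case exactly what the paper does. The chain $T_{E_1}T_{E_2}(E_1) \cong E_2[\ast]$, then $T_{E_2[\ast]} \cong T_{E_2}$, then conjugation by $T_{E_1}T_{E_2}$ via part (1), closes part (3) as you describe.
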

\begin{proof}
    The first statement follows by definition.
    The others are part of \cite[Theorem 1.2]{MR1831820}.
\end{proof}

\section{Half-spherical twists}\label{section:half-spherical-twists}
In this section, we introduce the notion of half-spherical twists and give some examples.
Throughout the section, we work over an algebraically closed field $k$.
Let $X$ and $T$ be smooth quasi-projective varieties and $\pi \colon X \to T$ be a flat morphism.
Let $0 \in T$ be a closed point and $i \colon X_0 = \pi^{-1}(0) \hookrightarrow X$ be the fiber over $0$.
Our situation is summarized into a diagram
\[
    \begin{tikzcd}
        X_0 \arrow[r,"i"]\arrow[d, "\delta"'] & X \arrow[d,"\Delta_T"]\arrow[rd, bend left, "\Delta_k"]&\\
        X_0 \times_k X_0 \arrow[r, "j"] \ar[d]& X \times_T X\arrow[r, "\iota"] \ar[d]&X \times_k X \\
        \{0\} \ar[r] & T &
    \end{tikzcd}
\]
in which $j$ and $\iota$ are the natural inclusions and $\delta$, $\Delta_T$, and $\Delta_k$ are the diagonal morphisms.
Note that the two squares are cartesian and tor-independent by \cite[Lemma 2.25]{MR2238172}.

\begin{definition}[Half-spherical object]
    We call an object $\cE \in D^b(X_0)$ such that $i_*{\cE}$ is spherical in $D^b(X)$ a \emph{half-spherical object} (with respect to $i$)\footnote{We sometimes say ``a half-spherical twist" without mentioning the map $i$ for simplicity. See also Remark \ref{remark:not-presice-notation}.}.
\end{definition}
Suppose we are given a half-spherical object $\cE \in D^b(X_0)$.
Our goal is to show that the twist functor $T_{i_*\cE}$ is a relative integral functor with respect to $T$, and then introduce an autoequivalence $H_{\cE}$ of $D^b(X_0)$ by applying Proposition \ref{prop:restriction-to-fiber}.
\begin{proposition}\label{prop:twist-functor-is-relative-fm}
    Let $\cE \in D^b(X_0)$ be a half-spherical object.
    \begin{enumerate}
        \item  The twist functor $T_{i_*\cE}$ is a relative integral functor with respect to $T$, i.e.~there exists an object $\cR \in D_{qc}(X \times_T X)$ such that $\Phi_{\cR} \cong T_{i_*\cE}$.
        \item The choice of $\cR$ in (1) is unique up to isomorphism and $\cR \in \FM_T(X)$.
    \end{enumerate}
\end{proposition}
\begin{proof}[Sketch of the proof]
    We give an outline of the proof. See Appendix \ref{subsection:compatibilities} and \ref{subsection:proof-of-twist-functor-is-relative-fm} for full details.

    For (1), the strategy is parallel to the first half of the proof of \cite[Proposition 2.7]{MR2200048}.
    Since the kernel $\cP_{\cE}$ of $T_{i_*\cE}$ is the cone of the map
    \begin{equation}
        \ev \colon (i_*\cE)^\vee \boxtimes i_*\cE \to \Delta_{k*}\cO_X,
    \end{equation}
    one simply has to find a morphism $\widetilde{\ev}$ in $D_{qc}(X \times_T X)$ that satisfies $\iota_*(\widetilde{\ev}) = \ev$, up to isomorphism.
    Denote $\CHom_{X_0}(\cE, i^!\cO_X)$ by $\cE'$.
    Consider the morphism
    \begin{equation}
        \widetilde{\ev} \colon j_*(\cE' \boxtimes \cE) \to j_*\delta_*(\cE' \otimes \cE)\cong \Delta_{T*}i_*(\cE' \otimes \cE) \to \Delta_{T*}\cO_X
    \end{equation}
    with
    \begin{itemize}
        \item $\cE' \boxtimes \cE \to \delta_*(\cE' \otimes \cE)$ is the adjoint to the natural isomorphism $\delta^*(\cE' \boxtimes \cE) \cong \cE' \otimes \cE$,
        \item $j_*\delta_* \cong \Delta_{T*}i_*$ is the natural isomorphism, and
        \item $i_*(\cE' \otimes \cE) \to \cO_X$ is the adjoint to the natural pairing $\cE' \otimes \cE \to i^!\cO_X$.
    \end{itemize}
    The domain and codomain of this map satisfy
    \begin{enumerate}
        \item[(a)] $(i_*\cE)^\vee \boxtimes i_*\cE \cong \iota_*j_*(\cE' \boxtimes \cE)$ by Grothendieck duality and Kunneth formula, and
        \item [(b)] $\Delta_{k*}\cO_X \cong \iota_*\Delta_{T*}\cO_X$.
    \end{enumerate}
    Under these identifications, one can show that $\iota_*(\widetilde{\ev}) = \ev$ holds, i.e.~they make the following diagram commutative:
    \begin{equation}\label{eq:evaluation-commutative-diagram}
        \begin{tikzcd}
            (i_*{\cE})^\vee \boxtimes i_*{\cE}\ar[r, "\ev"] \ar[d, "\sim", sloped] & \Delta_{k*}\cO_X \ar[d, "\sim", sloped]\\
            \iota_*j_*(\cE' \boxtimes \cE) \ar[r, "\iota_*(\widetilde{\ev})"]& \iota_*\Delta_{T*}\cO_X.
        \end{tikzcd}
    \end{equation}

    For (2), we first show that the object $\cR$ constructed above is an element of $\FM_T(X)$ by using the criterion in Theorem \ref{thm:adjoint-to-integral-functors} and Lemma \ref{lem:criterion-for-invertible-kernel}.
    Next, if we have another choice of $\cR' \in D_{qc}(X \times_T X)$ such that $\Phi_{\cR'} \cong T_{i_*\cE}$, it is also in $\FM_T(X)$ as $\Phi_{\cR'} \cong \Phi_{\cR}$ satisfies the conditions of Lemma \ref{lem:criterion-for-invertible-kernel}.
    Then any choice of $\cR$ is in $\FM_T(X)$ and thus unique up to isomorphism since the map $\Phi_{(-)} \colon \FM_T(X) \to \Auteq D^b(X)$ is injective by Corollary \ref{cor:FM-to-Auteq} and Proposition \ref{prpo:forget-base}.
\end{proof}
\begin{remark}
    The proof of \cite[Proposition 2.7]{MR2200048} omits detailed discussion on the commutativity of \eqref{eq:evaluation-commutative-diagram}, referring instead to the functoriality of duality.
    We provide a complete proof in our setting in the appendix.
\end{remark}

Once this is established, we can define half-spherical twists.
Let $j^* \colon \FM_T(X) \to \FM_k(X_0)$ be the ``restriction to the fiber'' morphism obtained by Proposition \ref{prop:restriction-to-fiber}.
\begin{definition}[Half-spherical twist]\label{def:half-spherical-twist}
    Let $\cE$ be a half-spherical object, and $\cR_{\cE} \in \FM_T(X)$ be the kernel such that $\Phi_{\cR_\cE} \cong T_{i_*{\cE}}$ as in Proposition \ref{prop:twist-functor-is-relative-fm}.
    We define the \emph{half-spherical twist} $H_{\cE}$ as the ``restriction of $T_{i_*{\cE}}$ to $X_0$'', i.e.~the integral functor with kernel $j^*\cR_{\cE} \in \FM_k(X_0)$:
    \[
        \begin{tikzcd}\label{diagram:definition-of-half-spherical-twist}
            \FM_k(X_0) \ar[d, "\Phi_{(-)}"'] & \FM_T(X)\ar[l, "j^*"'] \ar[d, "\Phi_{(-)}"] & j^*\cR_{\cE} \ar[d, mapsto]& \cR_{\cE} \ar[l, mapsto] \ar[d, mapsto]\\
            \Auteq D^b(X_0) & \Auteq D^b(X) & H_{\cE} & T_{i_*{\cE}}.
        \end{tikzcd}
    \]
\end{definition}
\begin{remark}\label{remark:not-presice-notation}
    Since the autoequivalence $H_{\cE}$ depends not only on the object $\cE$ but also on the data $(X, T, \pi, i \colon X_0 \hookrightarrow X)$, the notation $H_{\cE}$ is not precise. However, we will use it for simplicity.
\end{remark}

\begin{corollary}\label{cor:compatibility-of-half-spherical-twists-and-spherical-twists}
    The functor $H_{\cE}$ is an autoequivalence of $D^b(X_0)$ and makes the following diagram commutative:
    \begin{equation}
        \begin{tikzcd}
            D^b(X_0) \ar[r, "i_*"] \ar[d, "H_{\cE}"'] & D^b(X) \ar[r, "i^*"]\ar[d, "T_{i_*{\cE}}"] & D^b(X_0) \ar[d,  "H_{\cE}"]\\
            D^b(X_0) \ar[r, "i_*"'] & D^b(X) \ar[r, "i^*"'] & D^b(X_0).
        \end{tikzcd}
    \end{equation}
\end{corollary}

\begin{example}\label{ex:half-spherical-twist-from-kodaira-fiber}
    Let $\pi \colon S \to C$ be a relatively minimal elliptic surface and $i_* \colon F \hookrightarrow S$ be a reducible fiber.
    For any irreducible component $\bP^1 \cong G \subset F$ and an integer $a \in \bZ$, the degree $a$ line bundle $\cO_G(a)$ on $G$ (viewed as an object of $D^b(F)$) induces a spherical object $i_*\cO_G(a) \in D^b(S)$.
    Then we have the half-spherical twist $H_{\cO_G(a)} \in \Auteq D^b(F)$.
    This example will be discussed in detail in Section 4.
\end{example}

\begin{example}\label{ex:K3-degeneration}
    Let $\pi \colon X \to C$ be a type $\rom{3}$ degeneration of K3 surfaces, with additional assumptions as in Example \ref{ex:spherical-object-from-K3-degeneration}.
    Then the structure sheaf $\cO_{V_i}$ of a component $V_i$ of the central fiber $X_0$ is a spherical object in $D^b(X)$, and induces the half-spherical twist $H_{\cO_{V_i}} \in \Auteq D^b(X_0)$.
\end{example}

\begin{example}
    Let $\pi \colon X \to C$ be a smooth family of varieties over a smooth curve $C$ and $i \colon X_0 \hookrightarrow X$ be a fiber.
    A $\bP^n$-object $\cE \in D^b(X_0)$ with a suitable condition gives a spherical object $i_*{\cE} \in D^b(X)$ \cite[Proposition 1.4]{MR2200048}.
    Comparing the definition of half-spherical twists and the proof of \cite[Proposition 2.7]{MR2200048}, one can show $H_{\cE} \in \Auteq{D^b(X_0)}$ is isomorphic to the $\bP^n$-twist $P_{\cE}$.
\end{example}

\begin{example}\label{example_from_Atiyah_flop}
    An example with a higher dimensional base space $T$ is obtained as follows.
    Let $\overline{X} = \{xy=zw\} \subset \bA^4 = \Spec \bC[x, y, z, w]$ be the quadric cone and $\overline{f} \colon \overline{X} \to \bA^2 = \Spec \bC[z, w]$ be the natural projection to the $(z, w)$-plane.
    By composing the blow up $\pi \colon X \to \overline{X}$ along the plane $\{y=w=0\} \subset \overline{X}$, we have a morphism $f \colon X \to \bA^2$ between smooth varieties.
    It is flat since all the fibers are one-dimensional.
    The special fiber $X_0=f^{-1}(0) \cong \bA^1 \cup \bP^1 \cup \bA^1$ contains
    the exceptional curve $C \cong \bP^1$ of the blow-up $\pi$.
    The curve $C$ is a $(-1, -1)$-curve in $X$ and therefore its structure sheaf $\cO_C \in D^b(X_0)$ induces a spherical object in $D^b(X)$ (Example \ref{ex:spherical-objects} (4)).
\end{example}

We list some basic properties of half-spherical twists inherited from the ones of twist functors.
\begin{proposition}\label{prop:empty-intersection}
    Let $\cE, \cF \in D^b(X_0)$ be half-spherical objects.
    \begin{enumerate}
        \item For any $\cP \in \FM_T(X)$, we have
              \begin{equation}
                  \Phi_{j^* \cP} \circ H_{\cE} \circ \Phi_{j^* \cP}^{-1} \cong H_{\Phi_{j^* \cP}(\cE)}.
              \end{equation}
        \item If $\Supp \cE \cap \Supp \cF = \emptyset$, then $H_{\cE} \circ H_{\cF} \cong H_{\cF} \circ H_{\cE}$.
        \item For $\cG \in D^b(X_0)$ such that $\Supp \cE \cap \Supp \cG = \emptyset$, we have $H_{\cE} (\cG) \cong \cG$.
    \end{enumerate}
\end{proposition}
\begin{proof}
    For (1) and (2), there are corresponding relations of twist functors in $\Auteq D^b(X)$. They also hold in $\FM_T(X)$ since the map $\FM_T(X) \to \Auteq D^b(X)$ is an injective morphism by Corollary \ref{cor:FM-to-Auteq} and Proposition \ref{prpo:forget-base}. Then the ``restriction'' morphism $j^*$ (followed by $\Phi_{(-)}$) in Definition \ref{def:half-spherical-twist} maps them to the desired relations in $\Auteq D^b(X_0)$.

    For (3), recall the concrete presentation of the integral kernel $j^*\cR_{\cE}$ of $H_{\cE}$:
    \begin{equation}
        j^*\cR_{\cE} = \Cone(j^*j_*(\cE' \boxtimes \cE) \to j^*\Delta_{T*}\cO_X).
    \end{equation}
    In particular, there is an exact triangle
    \begin{equation}
        \Phi_{j^*j_*(\cE' \boxtimes \cE)}(\cG) \to \cG \to H_{\cE}(\cG) \xrightarrow{+1}.
    \end{equation}
    Note that $\Phi_{j^*\Delta_{T*}\cO_X} \cong \Phi_{\delta_*\cO_{X_0}} \cong \id$ holds by tor-independent base change $j^*\Delta_{T*} \cong \delta_* i^*$.
    On the other hand, the object $j^*j_*(\cE' \boxtimes \cE) \in D^b(X_0 \times_k X_0)$ is supported on $\Supp \cE \times_k \Supp \cE \subset X_0 \times_k X_0$.
    This implies $\Phi_{j^*j_*(\cE' \boxtimes \cE)}(\cG) = 0$.
    Therefore $H_{\cE}(\cG) \cong \cG$.
\end{proof}

\begin{example}
    Let $\cE \in D^b(X_0)$ be a half spherical object.
    In the setting of Example \ref{ex:relative-fourier-mukai}, we have the relations
    \begin{equation}
        H_{f_0^*\cE} \cong f_0^* \circ H_{\cE} \circ f_{0*}
    \end{equation}
    and
    \begin{equation}
        H_{\cL\vert_{X_0} \otimes \cE} \cong (\cL\vert_{X_0} \otimes -) \circ H_{\cE} \circ (\cL\vert_{X_0}^{-1} \otimes -).
    \end{equation}
\end{example}

\section{Applications to elliptic surfaces}\label{section:applications-to-elliptic-surfaces}
Throughout this section, we will work over $\bC$.
We denote by $\pi \colon S \to C$ an \emph{elliptic surface}, which means in this paper a projective flat morphism $\pi$ from a smooth quasi-projective surface $S$ to a smooth quasi-projective curve $C$ such that
\begin{enumerate}
    \item the general fiber is a smooth projective curve of genus one, and
    \item no fiber contains a $(-1)$-curve (i.e.~$\pi \colon S \to C$ is \emph{relatively minimal}).
\end{enumerate}

The goal of this section is to study the half-spherical twist $H_{\cO_G(a)}$ in Example \ref{ex:half-spherical-twist-from-kodaira-fiber}.
We will show that $H_{\cO_G(a)}$ corresponds to the \emph{half twist} along an arc on a real surface via homological mirror symmetry.
By using this correspondence, we will also give a new description of the autoequivalence group $\Auteq D^b(S)$ of certain classes of elliptic surfaces.

\subsection{Kodaira fibers and half-spherical twists}
Let $i \colon F \hookrightarrow S$ be a fiber of $\pi \colon S \to C$.
We say $F$ is \emph{multiple} if there exists a divisor $D$ on $S$ and an integer $m \geq 2$ such that $F = mD$ (as divisors on $S$), and \emph{non-multiple} otherwise.

The possible (singular) fibers of an elliptic surface were first classified by Kodaira \cite{MR132556, MR184257} and N\'{e}ron \cite{MR179172}, and it was discovered that they follow an ADE classification.
We call them \emph{Kodaira fibers}.
The following list provides the classification.
It includes Kodaira's notation and, for reducible fibers, the Dynkin types determined by the intersection matrices of their irreducible components.

\begin{enumerate}
    \item[(0)] $\rom{1}_0$, a smooth fiber.
    \item If $F$ is non-multiple and irreducible then it is isomorphic to
          \begin{itemize}
              \item $\rom{1}_1$, a rational curve with one node, or
              \item $\rom{2}$, a rational curve with one cusp.
          \end{itemize}
    \item If $F$ is non-multiple but reducible, then they are classified into two infinite families
          \begin{itemize}
              \item $(\rom{1}_n, \tilde{A}_{n-1})$, a cycle of $n$ projective lines ($n \geq 2$),
              \item $(\rom{1}^*_n, \tilde{D}_{n+4})$, ($n \geq 0$),
          \end{itemize}
          and exceptional ones
          \begin{itemize}
              \item $(\rom{3}$, $\tilde{A}_1)$, two projective lines tangent at one point with multiplicity $2$,
              \item $(\rom{4}$, $\tilde{A}_2)$, three projective lines intersecting at one point,
              \item $(\rom{2}^*$, $\tilde{E}_8)$,
              \item $(\rom{3}^*$, $\tilde{E}_7)$, and
              \item $(\rom{4}^*$, $\tilde{E}_6)$.
          \end{itemize}
    \item If $F$ is multiple, then it is an $m$-multiple of a fiber of type $\rom{1}_n$ ($m \geq 2, n \geq 0$), which is denoted by ${}_m\rom{1}_n$.
\end{enumerate}

For example, the Kodaira fibers of type $\rom{1}_n$ look like Figure \ref{fig:kodaira-fibers}.
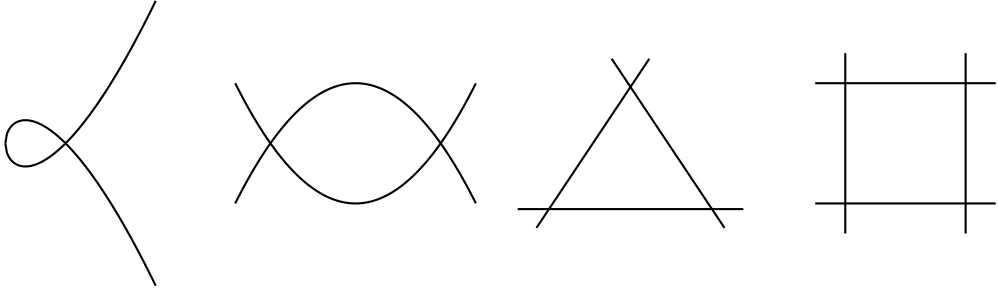
\begin{figure}[h]
    \centering
    \begin{minipage}{.2\textwidth}
        \centering
        \begin{tikzpicture}[samples=300, scale=0.8]
            \draw[thick, domain=-1:1.5, samples=100] plot(\x,{sqrt(\x+1)*(abs(\x))});
            \draw[thick, domain=-1:1.5, samples=100] plot(\x,{-sqrt(\x+1)*(abs(\x))});
        \end{tikzpicture}
    \end{minipage}
    \begin{minipage}{.2\textwidth}
        \centering
        \begin{tikzpicture}[scale=0.8]
            \draw[thick, domain=-2:2, samples=100] plot(\x, {(1/2)*(\x)^2 - 1});
            \draw[thick, domain=-2:2, samples=100] plot(\x, {(-1/2)*(\x)^2 + 1});
        \end{tikzpicture}
    \end{minipage}
    \begin{minipage}{.2\textwidth}
        \centering
        \begin{tikzpicture}[scale=0.25]
            \draw[thick] (-9,-1)--(-3,8);
            \draw[thick] (-10,0)--(2,0);
            \draw[thick] (1,-1)--(-5,8);
        \end{tikzpicture}
    \end{minipage}
    \begin{minipage}{.2\textwidth}
        \centering
        \begin{tikzpicture}[scale=0.4]
            \draw[thick] (-3, -2)--(3, -2);
            \draw[thick] (-3, 2)--(3, 2);
            \draw[thick] (-2, -3)--(-2, 3);
            \draw[thick] (2, -3)--(2, 3);
        \end{tikzpicture}
    \end{minipage}
    \caption{Kodaira fibers of type $\rom{1}_1$, $\rom{1}_2$, $\rom{1}_3$, and $\rom{1}_4$, respectively.}
    \label{fig:kodaira-fibers}
\end{figure}

If $F$ is reducible, then every irreducible component $G$ (with the reduced subscheme structure) of $F$ is a $(-2)$-curve on $S$ \cite[Section V.7]{MR2030225}.
The line bundle $\cO_G(a)$ on $G \cong \bP^1$ with degree $a \in \bZ$, viewed as an object of the category $D^b(F)$, induces a spherical object $i_*\cO_G(a) \in D^b(S)$, and gives the half-spherical twist $H_{\cO_G(a)} \in \Auteq D^b(F)$ (Example \ref{ex:half-spherical-twist-from-kodaira-fiber}).
They satisfy the following relations in general.

\begin{proposition}[braid relations]\label{prop:braid-relation}
    Let $G$ and $G'$ be irreducible components of a reducible fiber $F$.
    \begin{enumerate}
        \item If $G \cdot G' = 1$, then we have
              \begin{equation}
                  H_{\cO_G(a)} \circ H_{\cO_{G'}(a')} \circ H_{\cO_G(a)} \cong H_{\cO_{G'}(a')} \circ H_{\cO_G(a)} \circ H_{\cO_{G'}(a')}
              \end{equation}
              for any $a, a' \in \bZ$.
        \item If $G \cap G' = \emptyset$, then we have
              \begin{equation}
                  H_{\cO_G(a)} \circ H_{\cO_{G'}(a')} \cong H_{\cO_{G'}(a')} \circ H_{\cO_G(a)}
              \end{equation}
              for any $a, a' \in \bZ$.
    \end{enumerate}
\end{proposition}
\begin{proof}
    There are corresponding braid relations for the twist functors $T_{\cO_G(a)}$ and $T_{\cO_{G'}(a')}$ in $\Auteq D^b(S)$ \cite[Example 3.5]{MR1831820}.
    Then the statement follows from the same argument as in the proof of Proposition \ref{prop:empty-intersection}.
\end{proof}

\begin{proposition}\label{prop:half-spherical-twist-and-line-bundle}
    Let $G$ be an irreducible component of a reducible fiber $F$.
    Then we have
    $H_{\cO_G(a-1)} \circ H_{\cO_G(a)} \cong (-) \otimes \cO_{S}(G)\lvert_{F}$ for every $a \in \bZ$.
\end{proposition}
\begin{proof}
    For a smooth surface $S$, a $(-2)$-curve $G \subset S$, and an integer $a \in \bZ$, we have a relation $T_{\cO_G(a-1)} \circ T_{\cO_G(a)} \cong (-) \otimes \cO_{S}(G)$ in $\Auteq D^b(S)$ by \cite[Lemma 4.15 (i)(2)]{MR2198807}.
    Then the proposition follows as in the proof of the previous proposition.
\end{proof}

\subsection{Homological mirror symmetry}
We will study the half-spherical twists $H_{\cO_G(a)}$ for type $\rom{1}_n$ Kodaira fibers in the context of homological mirror symmetry.
In this subsection, we denote by $F_n$ a Kodaira fiber of type $\rom{1}_n$, and \textbf{we always assume} $\mathbf{n \geq 2}$\footnote{If $n = 1$, the component $G = F$ is not a $(-2)$ curve, and does not give half-spherical twists.}.
To begin with, we prepare some computations on $\Ext$ groups.
In Lemmas~\ref{lem:ext-on-surface}--\ref{lem:preparation-2}, we denote by $G$ an irreducible component of $F_n$ and by $H = H_{\cO_G(-1)}$ the half-spherical twist along $\cO_G(-1)$.

\begin{lemma}\label{lem:ext-on-surface}
    \begin{enumerate}
        \item $\Ext^*_S(\cO_G(-1), \cO_{F_n}) = 0$.
        \item $\Ext^*_S(\cO_G(-1), \cO_x) = \begin{cases}
                      \bC \oplus \bC[-1] & x \in G              \\
                      0                  & x \in S \setminus G.
                  \end{cases}$
    \end{enumerate}
\end{lemma}
\begin{proof}
    (1). Since $\cO_G(-1)$ is spherical, we have $\cO_G(-1) \otimes \omega_S = \cO_G(-1)$. Then by Serre duality, it is enough to show $\Ext^*_S(\cO_{F_n}, \cO_G(-1)) = 0$.
    The locally free resolution
    \begin{equation}
        0 \to \cO_S(-F_n) \to \cO_S \to \cO_{F_n} \to 0
    \end{equation}
    computes the $\CExt$ sheaves as
    \begin{align}
        \CExt^0_S(\cO_{F_n}, \cO_G(-1))        & = \cO_G(-1)                                \\
        \CExt^1_S(\cO_{F_n}, \cO_G(-1))        & = \cO_G(-1) \otimes \cO_S(F_n) = \cO_G(-1) \\
        \CExt^{\geq 2}_S(\cO_{F_n}, \cO_G(-1)) & = 0.
    \end{align}
    Note that $\cO_S(F_n)\vert_G \cong \cO_G$ holds.
    In addition, we have $H^*(\cO_G(-1)) = 0$.
    Then the local-to-global spectral sequence shows the statement.

    (2). The case $x \in S \setminus G$ is trivial. Let $x \in G$. Consider the exact sequence
    \begin{equation}\label{eq:locally-free-resolution-of-O_G}
        0 \to \cO_S(-G) \to \cO_S \to \cO_{G} \to 0.
    \end{equation}
    Since the two sheaves $\cO_S(-G)$ and $\cO_{G}$ are isomorphic on a neighborhood of $x$, we have $\CExt^*_S(\cO_{G}(-1), \cO_x) = \CExt^*_S(\cO_{G}, \cO_x)$.
    The latter can be computed by the locally free resolution \eqref{eq:locally-free-resolution-of-O_G} as
    \begin{align}
        \CExt^0_S(\cO_{G}, \cO_x)        & = \cO_x                          \\
        \CExt^1_S(\cO_{G}, \cO_x)        & = \cO_G(S) \otimes \cO_x = \cO_x \\
        \CExt^{\geq 2}_S(\cO_{G}, \cO_x) & = 0.
    \end{align}
    Then the local-to-global spectral sequence applies.
\end{proof}

\begin{lemma}\label{lem:preparation}
    \begin{enumerate}
        \item $H(\cO_{F_n}) \cong \cO_{F_n}$.
        \item $H(\cO_x) \cong \cO_x$ for every point $x \in F \setminus G$.
        \item For every point $x \in G$, there is a non-splitting exact triangle \begin{equation}
                  \cO_G(-2)[1] \to H(\cO_x)\to \cO_G(-1) \xrightarrow{+1}
              \end{equation}
              in $D^b(F_n)$.
    \end{enumerate}
\end{lemma}
\begin{proof}
    By Lemma \ref{lem:ext-on-surface}, we have $\cO_{F_n} \in \langle \cO_G(-1) \rangle^\perp \subset D^b(S)$ and hence $T_{\cO_G(-1)}(\cO_{F_n}) \cong \cO_{F_n}$ (Corollary \ref{cor:orthogonal-to-spherical}).
    Combining $i_* \circ H \cong T_{\cO_G(-1)} \circ i_*$ (Corollary \ref{cor:compatibility-of-half-spherical-twists-and-spherical-twists}), there is an isomorphism $i_*H(\cO_{F_n}) \cong T_{\cO_G(-1)}(\cO_{F_n}) \cong \cO_{F_n}$ in $D^b(S)$.
    Since $i$ is a closed immersion, the complex $H(\cO_{F_n})$ is a coherent sheaf on $F_n$ and therefore $H(\cO_{F_n}) \cong \cO_{F_n}$ in $D^b(F_n)$.
    This proves (1).
    The proof of (2) is similar.

    For (3), first notice that there are isomorphisms of coherent sheaves
    \begin{equation}
        \cH^i(T_{\cO_G(-1)}(\cO_x)) \cong \begin{cases}
            \cO_G(-1) & i = 0,            \\
            \cO_G(-2) & i = -1,           \\
            0         & \text{otherwise},
        \end{cases}
    \end{equation}
    for $x \in G$.
    This follows from the long exact sequence associated with the exact triangle \eqref{eq:exact-triangle-of-spherical-twitst}
    \begin{equation}
        \Ext^*_S(\cO_G(-1), \cO_x) \otimes_\bC \cO_G(-1)\to \cO_x \to T_{\cO_G(-1)}(\cO_x) \xrightarrow{+1}
    \end{equation}
    and Lemma \ref{lem:ext-on-surface}.
    Since $T_{\cO_G(-1)}(\cO_x) = i_*H(\cO_x)$ and the pushforward $i_*$ by the closed immersion $i$ preserves the cohomology sheaves, we have
    \begin{equation}
        \cH^i(H(\cO_x)) \cong \begin{cases}
            \cO_G(-1) & i = 0,            \\
            \cO_G(-2) & i = -1,           \\
            0         & \text{otherwise}.
        \end{cases}
    \end{equation}
    In particular, there is a natural morphism $H(\cO_x) \to \cO_G(-1)$, from the complex $H(\cO_x)$ to its rightmost non-zero cohomology sheaf $\cO_G(-1)$ (\cite[Excersice 2.32]{MR2244106}).
    In addition, its cocone must be $\cO_G(-2)[1]$ by the long exact sequence of cohomology sheaves.
    Then there is an exact triangle
    \begin{equation}
        \cO_G(-2)[1] \to H(\cO_x)\to \cO_G(-1) \xrightarrow{+1},
    \end{equation}
    which is non-splitting since $\cO_G(-2)[1]$, $H(\cO_x)$, and $\cO_G(-1)$ are indecomposable.
\end{proof}

\begin{remark}
    \begin{enumerate}
        \item The object $H(\cO_x)$ is perfect if and only if $x$ is a smooth point of $F_n$.
        \item We have $\Ext^2_{F_n}(\cO_G(-2), \cO_G(-1)) = \bC^2$ as follows: First, $\CHom_{F_n}(\cO_G(-2), \cO_G(-1)) = \cO_G(1)$. Second, by local computations, $\CExt^i_{F_n}(\cO_G(-2), \cO_G(-1)) = \cO_s \oplus \cO_t$ for even $i \geq 2$ and $=0$ for odd $i \geq 1$, where $s$ and $t$ are the two singular points of $F_n$ on $G$. Finally, the local-to-global spectral sequence computes the $\Ext$ group.
        \item Since the automorphisms of $\cO_G(-2)[1]$ and $\cO_G(-1)$ are only scalars, the set of objects which is obtained as a non-splitting extension of $\cO_G(-1)$ by $\cO_G(-2)[1]$ is given by $(\Ext^2_{F_n}(\cO_G(-2), \cO_G(-1)) \setminus \{0\}) / \bC^*$.
              Together with (2), the natural map
              \begin{equation}
                  G \to (\Ext^2_{F_n}(\cO_G(-2), \cO_G(-1)) \setminus \{0\}) / \bC^* = \bP^1, \quad x \mapsto H(\cO_x)
              \end{equation}
              turns out to be a bijection.
    \end{enumerate}
\end{remark}

\begin{lemma}\label{lem:resolution-of-H-Ox}
    Let $\overline{G}$ be the union of all the components other than $G$ of $F_n$, and $\cL$ be a line bundle on $F_n$ such that $\cL|_G \cong \cO_G(-1)$ and $\cL|_{\overline{G}} \cong \cO_{\overline{G}}$\footnote{We can always find such a line bundle because there is an exact sequence \begin{equation}0 \to \bC^* \to \Pic(F_n) \to \Pic(G)\oplus \Pic(\overline{G}) \to 0 \end{equation}}.
    Then for any smooth point $x \in G$ of $F_n$ there is an exact triangle
    \begin{equation}
        \cM^x \to \cL \to H(\cO_x) \xrightarrow{+1}
    \end{equation}
    where $\cM^x$ is a line bundle on $F_n$ such that $\cM^x|_G \cong \cO_{G}$ and $\cM^x|_{\overline{G}} \cong \cO_{\overline{G}}(-Z)$ with $Z = G \cap \overline{G}$.
\end{lemma}
\begin{proof}
    Since we have $\Ext^*(\cL, \cO_G(-2)) = 0$ and $\Ext^*(\cL, \cO_G(-1)) = \bC$, the exact triangle in Lemma \ref{lem:preparation} (3) implies $\Ext^*(\cL, H(\cO_x)) = \bC$.
    Let $\cL \to H(\cO_x)$ be a non-zero morphism, which is unique up to scaling.
    By the long exact sequence of cohomology sheaves, the cocone $\cM^x$ of this morphism is a coherent sheaf on $F_n$ which fits into an exact sequence
    \begin{equation}\label{eq:exact-sequence-of-cM}
        0 \to \cO_G(-2) \to \cM^x \to \cO_{\overline{G}}(-Z) \to 0.
    \end{equation}
    In addition, $\cM^x$ is a perfect complex as $\cL$ and $H(\cO_x)$ are perfect, and hence $\cM^x$ is a line bundle by \eqref{eq:exact-sequence-of-cM}.
    This shows the statement.
\end{proof}

In the following lemma, we say that two different irreducible components $G, G' \subset F_n$ are \emph{adjacent} if they intersect.
When $n = 2$, the two components are adjacent.
When $n \geq 3$, each component has two adjacent components.

\begin{lemma}\label{lem:preparation-2}
    Let $x \in G$ be a point which is a smooth point of $F_n$.
    Then we have the following.
    \begin{enumerate}
        \item $\Ext^*_{F_n}(H(\cO_{x}), \cO_{y}) =
                  \begin{cases}
                      \bC \oplus \bC[-1] & y \in G \text{: a smooth point of }F_n, \\
                      0                  & y \in F_n \setminus G.
                  \end{cases}$
        \item $\Ext^*_{F_n}(H(\cO_{x}), \cO_{F_n}) = \bC[-1]$.
        \item Let $G' \subset F_n$ be an irreducible component.
              \begin{enumerate}
                  \item[(a)] When $n = 2$, we have \begin{equation}
                            \Ext^*_{F_n}(H(\cO_{x}), \cO_{G'}(-1)) = \begin{cases}
                                \bC       & G=G',                             \\
                                \bC^2[-1] & \text{$G$ and $G'$ are adjacent}.
                            \end{cases}
                        \end{equation}
                  \item[(b)] When $n \geq 3$, we have \begin{equation}
                            \Ext^*_{F_n}(H(\cO_{x}), \cO_{G'}(-1)) = \begin{cases}
                                \bC     & G=G',                                         \\
                                \bC[-1] & \text{$G$ and $G'$ are adjacent},             \\
                                0       & \text{otherwise, i.e.~}G \cap G' = \emptyset.
                            \end{cases}
                        \end{equation}
              \end{enumerate}
        \item $\Ext^*_{F_n}(H(\cO_{x}), \cO_{G}) = \bC[-1] \oplus \bC^2$.
    \end{enumerate}
\end{lemma}
\begin{proof}
    We use the notations in Lemma \ref{lem:resolution-of-H-Ox}. For any coherent sheaf $\cF$ on $F_n$, we have a long exact sequence
    \begin{equation}\label{eq:long-exact-sequence-for-ext}
        0 \to \Ext^0(H(\cO_x), \cF) \to \Ext^0(\cL, \cF) \to \Ext^0(\cM^x, \cF) \to \Ext^1(H(\cO_x), \cF) \to 0
    \end{equation}
    and $\Ext^i(H(\cO_x), \cF) = 0$ for $i \neq 0, 1$ by Lemma \ref{lem:resolution-of-H-Ox}.

    (1). The case $y \in F_n \setminus G$ is clear.
    For a smooth point $y \in G$ of $F_n$, we have $\Ext^0(\cL, \cO_y) = \bC$ and  $\Ext^0(\cM^x, \cO_y) = \bC$.
    On the other hand, the map $\cM^x \to \cL$ is zero when restricted to $G$ since $\cM^x|_G \cong \cO_G$ and $\cL|_G \cong \cO_G(-1)$, and hence $\Ext^0(\cL, \cO_y) \to \Ext^0(\cM^x, \cO_y)$ is zero.
    Then \eqref{eq:long-exact-sequence-for-ext} looks like
    \begin{equation}
        0 \to \Ext^0(H(\cO_x), \cO_{G'}(-1)) \to \bC \xrightarrow{0} \bC \to \Ext^1(H(\cO_x), \cO_{G'}(-1)) \to 0
    \end{equation}
    so that $\Ext^0(H(\cO_x), \cO_{G'}(-1)) = \bC$ and $\Ext^1(H(\cO_x), \cO_{G'}(-1)) = 0$.

    (2). We have $\Ext^*(\cO_{F_n}, H(\cO_x)) = \bC$ by $\Ext^*(\cO_{F_n}, \cO_G(-2)) = \bC$, $\Ext^*(\cO_{F_n}, \cO_G(-1)) = 0$, and the exact triangle
    \begin{equation}
        \cO_G(-2)[1] \to H(\cO_x) \to \cO_G(-1) \xrightarrow{+1}.
    \end{equation}
    Then the statement follows from Serre duality.

    (3). The case $G \cap G' = \emptyset$ case is clear. When $G = G'$, \eqref{eq:long-exact-sequence-for-ext} looks like
    \begin{equation}
        0 \to \Ext^0(H(\cO_x), \cO_G(-1)) \to \bC \to 0 \to \Ext^1(H(\cO_x), \cO_G(-1)) \to 0.
    \end{equation}
    When $n=2$ and $G \neq G'$, \eqref{eq:long-exact-sequence-for-ext} looks like
    \begin{equation}
        0 \to \Ext^0(H(\cO_x), \cO_{G'}(-1)) \to 0 \to \bC^2 \to \Ext^1(H(\cO_x), \cO_{G'}(-1)) \to 0.
    \end{equation}
    When $n \geq 3$ and $G$ and $G'$ are adjacent, \eqref{eq:long-exact-sequence-for-ext} looks like
    \begin{equation}
        0 \to \Ext^0(H(\cO_x), \cO_{G'}(-1)) \to 0 \to \bC \to \Ext^1(H(\cO_x), \cO_{G'}(-1)) \to 0.
    \end{equation}
    Then the statement follows.

    (4). Similar to (1), the map $\Ext^0(\cL, \cO_G) \to \Ext^0(\cM^x, \cO_G)$ is zero and hence \eqref{eq:long-exact-sequence-for-ext} looks like
    \begin{equation}
        0 \to \Ext^0(H(\cO_x), \cO_G) \to \bC^2 \xrightarrow{0} \bC \to \Ext^1(H(\cO_x), \cO_G) \to 0.
    \end{equation}
    This shows (4).
\end{proof}

We now introduce several results from the perspective of homological mirror symmetry.
Let $T_n$ denote the $2$-torus with $n$-punctures.
According to a result of Lekili and Polishchuk, the curve $F_n$ and the real surface $T_n$ form a mirror pair.
\begin{theorem}[{\cite[Theorem B]{MR3663596}}]\label{thm:mirror-symmetry-for-F_n}
    There is a $\bC$-linear triangulated equivalence
    \begin{equation}
        D^b(F_n) \cong D^b(\cW(T_n))
    \end{equation}
    between the derived category of coherent sheaves of $F_n$ and the derived category of the wrapped Fukaya category $\cW(T_n)$ of $T_n$.
\end{theorem}
\begin{remark}
    \begin{enumerate}
        \item The corresponding equivalence for type $\rom{1}_0$ (i.e.~elliptic curves) and $\rom{1}_1$ Kodaira fibers are known by \cite{MR1633036} and \cite{lekili2012arithmeticmirrorsymmetry2torus}, respectively.
        \item For the other types of Kodaira fibers, their mirror partners (or even candidates of mirrors) are not known yet.
    \end{enumerate}

\end{remark}
This equivalence suggests that spherical objects and some autoequivalences of $D^b(F_n)$ correspond to closed curves and mapping classes of $T_n$, respectively.
For more detailed formulations, we refer to \cite{opperJEMS}.
\begin{theorem}[{\cite[Theorem A, Proposition 8.13]{opperJEMS}}]\label{thm:bijection-between-objects-and-curves}
    There exists a bijection between isomorphism classes of indecomposable objects of $D^b(F_n)$ up to shift, and homotopy classes of curves on $T_n$ equipped with indecomposable local systems.
    Moreover, under this bijection, spherical objects correspond to non-separating simple loops with one-dimensional local systems.
\end{theorem}
\begin{theorem}[{\cite{opperJEMS}}]\label{thm:intersections-are-morphisms}
    Let $\cF$ and $\cG$ be indecomposable objects of $D^b(F_n)$ which correspond to $(\gamma_{\cF}, V_{\cF})$ and $(\gamma_{\cG}, V_{\cG})$ by Theorem \ref{thm:bijection-between-objects-and-curves}, respectively.
    Here $\gamma_{\cF}, \gamma_{\cG}$ are homotopy classes of curves on $T_n$ and $V_{\cF}, V_{\cG}$ are local systems on these curves.
    Suppose that either $\cF$ or $\cG$ is perfect, $\cF$ is not a shift of $\cG$, and $\dim V_{\cF} = \dim V_{\cG} = 1$. Then $\gamma_{\cF}$ and $\gamma_{\cG}$ have precisely $\sum_{i}\dim\Ext^i(\cF, \cG)$ intersections.
\end{theorem}
\begin{proof}
    The following triangulated categories and exact functors are introduced in \cite{opperJEMS}:
    \begin{enumerate}
        \item The derived category $D^b(\Lambda_n)$ of an algebra $\Lambda_n$ with an autoequivalence $\tau = \vartheta^{-1}$ \cite[Section 2]{opperJEMS}.
        \item A full subcategory $D_{\partial} \subset D^b(\Lambda_n)$ \cite[Section 8]{opperJEMS}
        \item A fully faithful functor $\Perf(F_n) \xrightarrow{F} D^b(\Lambda_n)$ whose essential image consists of $\tau$-invariant objects \cite[Corollaries 3.5 and 6.3]{MR2854109} (see also \cite[Theorem 2.2]{opperJEMS}).
        \item The Verdier quotient $\pi \colon D^b(\Lambda_n) \to D^b(\Lambda_n)/D_{\partial}$ \cite[Section 8]{opperJEMS}.
        \item An equivalence $G \colon D^b(\Lambda_n)/D_{\partial} \xrightarrow{\sim} D^b(F_n)$ \cite[Proposition 8.6]{opperJEMS}.
    \end{enumerate}
    In addition, the composition $G \circ \pi \circ F$ is isomorphic to the identity functor when restricted to $\Perf(F_n)$ \cite[Remark 8.7]{opperJEMS}.

    Let $\cF, \cG \in D^b(F_n)$ be objects satisfying the conditions in the statement.
    By Serre duality, we may assume $\cG$ is perfect without loss of generality.
    Let $\cF' \in D^b(\Lambda_n)$ be an object such that $G(\pi(\cF')) \cong \cF$.
    We have a canonical map
    \begin{equation}\label{eq:verdier-quotient-morphism}
        \Ext^*_{D^b(\Lambda_n)}(\cF', F(\cG)) \to \Ext^*_{D^b(F_n)}(\cF, \cG)
    \end{equation}
    by $G \circ \pi$.
    Since $\cG$ is perfect, $F(\cG)$ is a $\vartheta$-invariant object by (3) and $\tau = \vartheta^{-1}$.
    Then \cite[Proposition 8.21]{opperJEMS} says the map \eqref{eq:verdier-quotient-morphism} is an isomorphism, and \cite[Proposition 4.10]{opperJEMS} can be applied to conclude the statement.
\end{proof}
\begin{theorem}[{\cite[Theorem D]{opperJEMS}}]\label{thm:autoequivalence-of-I_n-curve}
    The group of autoequivalences $\Auteq{D^b(F_n)}$ fits into the short exact sequence
    \begin{equation}
        1 \to \Aut^0(F_n) \times \Pic^0(F_n) \times \bZ[1] \to \Auteq{D^b(F_n)} \xrightarrow{\Upsilon} \MCG(T_n) \to 1,
    \end{equation}
    where $\Pic^0(F_n)$ denotes the group of line bundles with multi-degree zero, and $\Aut^0(F_n) \cong (\bC^\times)^n$ denotes the group of autoequivalences fixing the irreducible components of $F_n$.
\end{theorem}
\begin{theorem}[{\cite[Corollary 8.37]{opperJEMS}}]\label{thm:definition-of-upsilon}
    The morphism $\Upsilon$ in Theorem \ref{thm:autoequivalence-of-I_n-curve} respects the correspondence in Theorem \ref{thm:bijection-between-objects-and-curves} in the following sense:
    For an element $\Phi \in \Auteq{D^b(F_n)}$, the mapping class $\Upsilon(\Phi)$ acts on the set of homotopy classes of curves on $T_n$ by $\gamma_{\cF} \mapsto \gamma_{\Phi(\cF)}$, where $\gamma_{\cF}$ is the curve on $T_n$ corresponding to an indecomposable object $\cF \in D^b(F_n)$.
\end{theorem}
\begin{theorem}[{\cite{opperJEMS}}]\label{thm:spherical-twist-and-dehn-twist}
    Let $\cE \in D^b(F_n)$ be a spherical object and $(\gamma_{\cE}, V_{\cE})$ be the corresponding loop and local system. Then the morphism $\Upsilon$ in Theorem \ref{thm:autoequivalence-of-I_n-curve} maps the twist functor $T_{\cE}$ to the Dehn twist $T_{\gamma_{\cE}}$ along $\gamma_{\cE}$.
\end{theorem}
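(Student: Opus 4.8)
The plan is to check the identity $\Upsilon(T_E)=T_{\gamma_E}$ in $\MCG(\sT^n)$ by reducing it to a finite computation, using that a mapping class is detected by its action on curves. First I would reduce to finitely many curves. By the Dehn--Nielsen--Baer theorem (Theorem~\ref{Dehn--Nielsen--Baer}) the map $\sigma\colon\MCG(\sT^n)\to\Out^\star(\pi_1(\sT^n))$ is injective, so two mapping classes coincide once they agree on a set of simple closed curves whose based lifts generate $\pi_1(\sT^n)$. On the other hand, Theorem~\ref{definition_of_upsilon} says that $\Upsilon(T_E)$ sends $\gamma_F\mapsto\gamma_{T_E(F)}$ for every indecomposable $F$. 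Hence it suffices to pick a finite family of indecomposables $F_1,\dots,F_m$ whose curves $\gamma_{F_1},\dots,\gamma_{F_m}$ fill $\sT^n$ (equivalently, carry a generating set of the fundamental group), and to verify $\gamma_{T_E(F_k)}=T_{\gamma_E}(\gamma_{F_k})$ for each $k$.

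Next I would compute each $T_E(F_k)$ from the defining triangle
\[
    \Hom^*_Y(E,F)\otimes_\bC E \to F \to T_E(F)\xrightarrow{+1},
\]
using that by Theorem~\ref{intersections_are_morphisms} the total dimension $\sum_i\dim\Hom^i_Y(E,F)$ equals the geometric intersection number of $\gamma_E$ and $\gamma_F$, so taking the $F_k$ to meet $\gamma_E$ in few points keeps the cones small. To turn the object $T_E(F_k)$ (after decomposing it into indecomposables) back into a curve, I would work inside the surface/string model underlying Theorem~\ref{bijection_between_objects_and_curves}: the equivalence of $D^b(Y)$ with the derived category of the associated gentle algebra, under which indecomposables become string/band objects, $\Hom$-spaces are read off from oriented intersections of the corresponding curves, and the mapping cone of a morphism between string objects is given by an explicit combinatorial resolution rule on strings. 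Applying that rule to $\Hom^*_Y(E,F)\otimes E\to F$, each intersection point of $\gamma_F$ with the loop $\gamma_E$ should get replaced by a detour running once around $\gamma_E$ --- precisely the local surgery that defines $T_{\gamma_E}$ --- and following the one-dimensional local systems shows the scalars attached to $V_E$ enter in the right slots. Comparing with the standard combinatorial description of the Dehn twist (insert a correctly oriented parallel copy of $\gamma_E$ at each intersection point) then gives $\gamma_{T_E(F_k)}=T_{\gamma_E}(\gamma_{F_k})$, and the first paragraph concludes the proof.

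The hard part will be this cone computation in the string model: one must show that the detour winds around $\gamma_E$ \emph{exactly once} (not zero times, not a higher power), with the correct orientation and local-system bookkeeping --- this is where the genuine content lies. A heavier alternative that avoids it is to invoke homological mirror symmetry $D^b(Y)\simeq\cW(\sT^n)$: there $E$ corresponds to an immersed Lagrangian circle with image $\gamma_E$ and local system $V_E$, the Seidel--Thomas comparison identifies $T_E$ with the autoequivalence induced by the symplectic Dehn twist along that circle, and the underlying homeomorphism of a symplectic Dehn twist is the topological Dehn twist $T_{\gamma_E}$; applying $\Upsilon$ then yields the claim. The difficulty there is matching the precise (partially wrapped, graded) Fukaya-categorical conventions with the description of $D^b(Y)$ that defines $\Upsilon$.
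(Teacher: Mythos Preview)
The paper does not prove this theorem at all: it is stated as a citation of Opper's result \cite[Theorem 5.9]{2020arXiv201108288O} and used as a black box, so there is no in-paper proof to compare your proposal against.

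That said, your first strategy is essentially the shape of Opper's argument in the cited paper: work in the surface/string model for the gentle algebra underlying $D^b(Y)$, where cones of morphisms between indecomposable (string/band) objects are given by an explicit resolution-of-crossings rule, and observe that resolving each intersection of $\gamma_F$ with the simple loop $\gamma_E$ reproduces the local picture of the Dehn twist. One refinement: the reduction step you describe via Dehn--Nielsen--Baer is slightly mismatched, since $\Upsilon(T_E)$ and $T_{\gamma_E}$ are being compared only through their action on \emph{free} homotopy classes of curves (conjugacy classes in $\pi_1$), not on $\pi_1$ itself; what you actually want is the Alexander method (a mapping class acting trivially on a filling collection of simple closed curves and arcs is trivial), or---cleaner---to show directly that $\gamma_{T_E(F)}=T_{\gamma_E}(\gamma_F)$ for \emph{all} indecomposable $F$, which is what the combinatorial cone rule gives anyway and avoids any finite-check detour. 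Your second (HMS) alternative also works in principle, but as you note the bookkeeping of gradings and stops needed to align the Fukaya model with the specific $\Upsilon$ in Theorem~\ref{autoequivalence_of_I_n_curve} is exactly the kind of convention-matching that Opper's intrinsic approach is designed to bypass.
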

\begin{proof}
    We use the notations in the proof of Theorem \ref{thm:intersections-are-morphisms}.
    Since any autoequivalence of $D^b(\Lambda_n)$ restricts to $D_{\partial}$ \cite[Lemma 8.5]{opperJEMS}, the localization $G \circ \pi$ induces a natural morphism $\Auteq{D^b(\Lambda_n)} \to \Auteq{D^b(F_n)}$.
    On the other hand, let $\bT_n$ denotes a torus with $n$ boundary components \cite[Section 4]{opperJEMS}.
    There is a morphism $\Psi \colon \Auteq{D^b(\Lambda_n)} \to \MCG(\bT_n)$ \cite[Section 6]{opperJEMS}, and the proof of \cite[Proposition 8.36]{opperJEMS} shows that there is a commutative diagram
    \begin{equation}
        \begin{tikzcd}
            \Auteq{D^b(\Lambda_n)} \ar[r, "\Psi"] \ar[d] & \MCG(\bT_n) \ar[d]\\
            \Auteq{D^b(F_n)} \ar[r, "\Upsilon"'] & \MCG(T_n)
        \end{tikzcd}
    \end{equation}
    where the morphism $\MCG(\bT_n) \to \MCG(T_n)$ is the ``radial extension'' \cite[(6.3)]{opperJEMS}, which preserves Dehn twists.

    Now let $\cE \in D^b(F_n)$ be a spherical object.
    By \cite[Lemma 5.1]{opperJEMS}, $F(\cE)$ is a spherical object in $D^b(\Lambda_n)$.
    The twist functor $T_{F(\cE)}$ is then mapped to the Dehn twist $T_{\gamma_{G(\pi (F(\cE)))}} \in \MCG(\bT_n)$ by \cite[Theorem 6.10, Proposition 8.13]{opperJEMS}.
    Since $G(\pi (F(\cE)))$ is isomorphic to $\cE$ and the radial extension preserves Dehn twists, the statement follows.
\end{proof}

From now on, we denote the curve corresponding to an indecomposable object $\cE \in D^b(F_n)$ by $\gamma_{\cE}$.

\begin{example}\label{ex:corresponding-curves-via-hms}
    Figure \ref{fig:corresponding-curves-via-hms} shows some examples of Theorem \ref{thm:bijection-between-objects-and-curves}.
    The big rectangle illustrates the $n$-punctured torus $T_n$.
    The top and bottom edges (resp.~the left and right edges) are identified, and the white circles represent the punctures.

    Take an irreducible decomposition $F_n = G_1 \cup \cdots \cup G_n$ with cyclic order and pick a smooth point $x_i$ from each component $G_i$.
    The structure sheaves $\cO_{F_n}, \cO_{x_1}, \dots, \cO_{x_n}$ of $F_n$ and the points $x_i$ are spherical objects of $D^b(F_n)$.
    They correspond to non-separating simple loops $\gamma_{\cO_{F_n}}, \gamma_{\cO_{x_1}}, \dots, \gamma_{\cO_{x_n}}$ on $T_n$ which are shown in Figure \ref{fig:corresponding-curves-via-hms}.
    Additionally, every sheaf of the form $\cO_{G_i}(a)$ is an indecomposable object that is not perfect (and hence not spherical).
    As a result, the curves $\gamma_{\cO_{G_i}(-1)}$ are not loops but arcs connecting two punctures.
\end{example}

\begin{figure}[h]
    \centering
    \begin{displaymath}
        \begin{tikzpicture}[scale=1.2]
            \draw[dashed] (0,0)--(11,0);
            \draw[dashed] (0,0)--(0,4);
            \draw[dashed] (0,4)--(11,4);
            \draw[dashed] (11,4)--(11,0);

            \draw[thick] (0, 2)--(1, 2);
            \draw[thick] (1, 2)--(3, 2);
            \draw[thick] (3, 2)--(5, 2);
            \draw[thick] (9, 2)--(11, 2);

            \draw[thick] (0, 0.5)--(11, 0.5);

            \draw[thick] (2, 0)--(2, 4);
            \draw[thick] (4, 0)--(4, 4);
            \draw[thick] (10, 0)--(10, 4);

            \draw[dotted, thick] (5+1, 2)--(9-1, 2);
            \foreach \u in {1, 3, 5, 9}
                {
                    \filldraw[white] (\u, 2) circle (2pt);
                    \draw[black] (\u, 2) circle (2pt);
                }

            \draw(0, 0.5) node[left]{$\gamma_{\cO_{F_n}}$};
            \draw(2, 4) node[above]{$\gamma_{\cO_{x_1}}$};
            \draw(4, 4) node[above]{$\gamma_{\cO_{x_2}}$};
            \draw(10, 4) node[above]{$\gamma_{\cO_{x_n}}$};

            \draw(2, 3) node[above left]{$\gamma_{\cO_{G_1}(-1)}$};
            \draw(1, 3) to[out=-90,in=135](1.5, 2);
            \draw(4, 3) node[above left]{$\gamma_{\cO_{G_2}(-1)}$};
            \draw(3, 3) to[out=-90,in=135](3.5, 2);

            \draw(11, 2) node[right]{$\gamma_{\cO_{G_n}(-1)}$};

        \end{tikzpicture}
    \end{displaymath}
    \caption{The curves $\gamma_{\cO_{F_n}}, \gamma_{\cO_{x_i}}$ and $\gamma_{\cO_{G_i}(-1)}$ on the $n$-punctured torus $T_n$.}
    \label{fig:corresponding-curves-via-hms}
\end{figure}
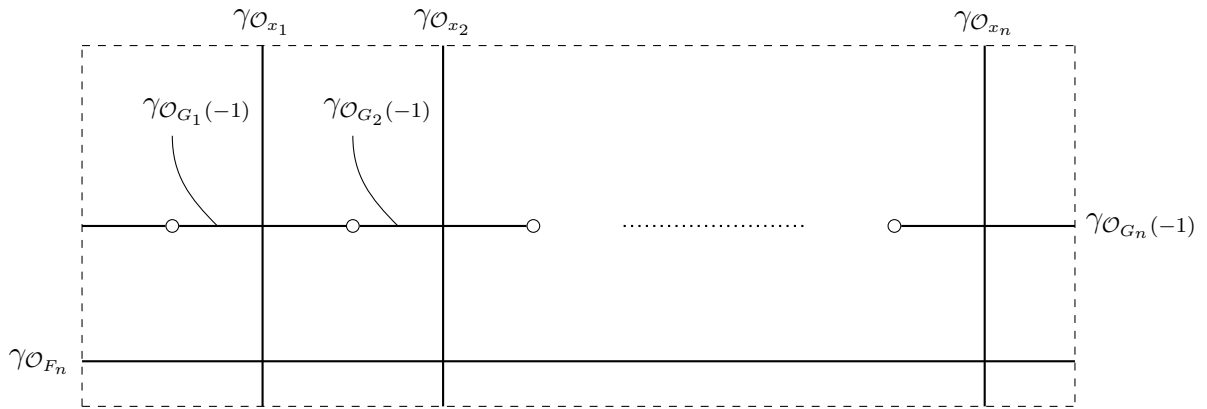

\begin{example}
    For an irreducible component $G \subset F_n$ and $a \in \bZ$, the curve $\gamma_{\cO_G(a)}$ corresponding to the sheaf $\cO_G(a) \in D^b(F_n)$ is shown in Figure \ref{geometric_picture_of_induced_autoequivalences_1}.
    It is an arc wrapping around the torus $|a + 1|$ times.
    When $a + 1 <0$ the curve $\gamma_{\cO_G(a)}$ wraps around the torus in the opposite direction.
    When $a = -1$ we get the curve $\gamma_{\cO_G(-1)}$ which is already shown in Figure \ref{fig:corresponding-curves-via-hms}.
\end{example}

\begin{figure}[h]
    \centering
    \begin{displaymath}
        \begin{tikzpicture}
            \draw[dashed] (0,0)--(11,0);
            \draw[dashed] (0,0)--(0,4);
            \draw[dashed] (0,4)--(11,4);
            \draw[dashed] (11,4)--(11,0);

            \draw[thick] (2, 2)--(2.5, 0);
            \draw[thick] (2.5, 4)--(3.5, 0);
            \draw[thick] (3.5, 4)--(4.5, 0);

            \draw[dotted, thick] (4.5, 2)--(6, 2);

            \draw[thick] (6.5, 4)--(7, 2);

            \foreach \u in {2, 7}
                {
                    \filldraw[white] (\u, 2) circle (2pt);
                    \draw[black] (\u, 2) circle (2pt);
                }

            \draw(2.5, 4) node[above]{$\gamma$};

            \draw[
                thick,
                decoration={
                        brace,
                        mirror,
                        raise=0.5cm
                    },
                decorate
            ] (2.5, 0) -- (7, 0);
            \draw(4.5, -1) node{$a+1$ times};

        \end{tikzpicture}
    \end{displaymath}
    \caption{The curve $\gamma = \gamma_{\cO_G(a)}$.}
    \label{geometric_picture_of_induced_autoequivalences_1}
\end{figure}

The following theorem is the main result of this section, which says that half-spherical twists correspond to half twists under the mirror symmetry.

\begin{theorem}\label{thm:half-spherical-twist-and-half-twist}
    Let $F_n$ be a reducible fiber of type $\rom{1}_n$ with $n \geq 2$.
    Let $G \subset F_n$ be an irreducible component, $a \in \bZ$ be an integer, and $H_{\cO_{G}(a)} \in \Auteq D^b(F_n)$ be the half-spherical twist along the sheaf $\cO_{G}(a)$.
    Then $H_{\cO_{G}(a)}$ is mapped to the half twist $h_{\gamma_{\cO_{G}(a)}}$ along the arc $\gamma_{\cO_{G}(a)}$ on $T_n$ by the morphism $\Upsilon$ in Theorem \ref{thm:definition-of-upsilon}:
    \begin{equation}
        \Upsilon \colon \Auteq D^b(F_n) \to \MCG(T_n), \quad H_{\cO_{G}(a)} \mapsto h_{\gamma_{\cO_{G}(a)}}.
    \end{equation}
\end{theorem}
\begin{proof}
    First, we prove the statement in the case $a = -1$.
    Let us denote the half-spherical twist along the sheaf $\cO_{G}(-1)$ by $H$ and the half twist along the curve $\gamma_{\cO_{G}(-1)}$ by $h$.
    Our goal is to show that $\Upsilon(H) = h$.
    By Dehn--Nielsen--Baer theorem (Theorem \ref{thm:Dehn--Nielsen--Baer}),
    it is enough to show that $\Upsilon(H)$ and $h$ induce the same action on $\pi_1(T_n)$.
    Moreover, we only need to examine the finite number of curves $\gamma_{\cO_{F_n}}, \gamma_{\cO_{x_1}}, \dots, \gamma_{\cO_{x_n}}$ whose homotopy classes generate $\pi_1(T_n)$ (see Figure \ref{fig:corresponding-curves-via-hms}).
    We may assume $G = G_1$ and $x_1 \in G_1$ without loss of generality.
    Then both $\Upsilon(H)$ and $h$ fix $\gamma_{\cO_{F_n}}, \gamma_{\cO_{x_2}}, \dots, \gamma_{\cO_{x_n}}$ by Proposition \ref{prop:empty-intersection} and
    the definition of half twists.
    It remains to show that the curves $\gamma \coloneq \Upsilon(H)(\gamma_{\cO_{x_1}})$ and $h(\gamma_{\cO_{x_1}})$ define the same element in $\pi_1(T_n)$.
    Note that (the homotopy class of) $\gamma$ is characterized as the curve corresponding to the object $H(\cO_{x_1})$ by Theorem \ref{thm:definition-of-upsilon}, which is a simple non-separating loop as $H(\cO_{x_1})$ is spherical.
    To attack this problem, we will mimic the proof of \cite[Corollary 8.27]{opperJEMS}.
    We only consider the case $n \geq 3$.
    The case $n = 2$ can be treated similarly.
    There are identities
    \begin{align}
         & \Ext^*_{F_n}(\cO_{x_i}, H(\cO_{x_1})) = 0 \quad (i \neq 1),                \\
         & \Ext^*_{F_n}(\cO_{F_n}, H(\cO_{x_1})) = \bC,                               \\
         & \Ext^*_{F_n}(\cO_{G_i}(-1), H(\cO_{x_1})) = \begin{cases}
                                                           \bC[-1] & i = 1,               \\
                                                           \bC     & i = 2 \text{ or } n, \\
                                                           0       & \text{otherwise},
                                                       \end{cases} \\
         & \Ext^*_{F_n}(\cO_{x_1}, H(\cO_{x_1})) = \bC \oplus \bC[-1],                \\
         & \Ext^*_{F_n}(\cO_{G}, H(\cO_{x})) = \bC \oplus \bC^2[-1]
    \end{align}
    by Lemma \ref{lem:preparation-2}.
    They imply, by Theorem \ref{thm:intersections-are-morphisms},
    that the curve $\gamma = \gamma_{H(\cO_{x_1})}$
    \begin{enumerate}
        \item is disjoint from $\gamma_{\cO_{x_2}}, \dots, \gamma_{\cO{x_n}}$,
        \item intersects
              $\gamma_{O_{F_n}}$, $\gamma_{\cO_{G_1}(-1)}$,
              $\gamma_{\cO_{G_2}(-1)}, \gamma_{\cO_{G_n}(-1)}$ exactly once,
        \item intersects $\gamma_{\cO_{x_1}}$ exactly twice, and
        \item intersects $\gamma_{\cO_{G}}$ exactly three times.
    \end{enumerate}
    The first three data of intersections, combined with the fact that $\gamma$ is a simple non-separating loop, determine that the curve $\gamma$ is homotopic to either $h(\gamma_{\cO_{x_1}})$ or $h^{-1}(\gamma_{\cO_{x_1}})$, as depicted in Figure \ref{candidates_for_gamma}.
    The last condition (4) distinguishes these two as in Figure \ref{candidates_for_gamma}, and we conclude that $\gamma = h(\gamma_{\cO_{x_1}})$ in $\pi_1(T_n)$.

    For general $a \in \bZ$, we employ Proposition \ref{prop:half-spherical-twist-and-line-bundle} and similar relations for half twists to use induction on $a$.
    Let us denote the twist functor $T_{\cO_{x_i}} = (-)\otimes \cO(x_i)$ by $T_i$ and the Dehn twist along $\gamma_{\cO_{x_i}}$ by $t_i$.
    They are related by $\Upsilon(T_i) = t_i$.
    Additionally, we denote $H_{\cO_{G_i}(a)}$ by $H_{i, a}$ and the half twist along $\gamma_{\cO_{G_i}(a)}$ by $h_{i, a}$.
    The Dehn twists and the half twists are known to satisfy the relations
    \begin{align}
        t_i h_{i, a} t_i^{-1}          & = h_{i, a+1},            \\
        h_{i, a} t_i h_{i, a} t_i^{-1} & = t_{i-1}t_i^{-2}t_{i+1}
    \end{align}
    for $i \in \bZ/n\bZ$ and $a \in \bZ$.
    The first relation follows from the definition of half twists, and the second one is found in the proof of \cite[Corollary 2.11]{MR1805936} for instance.
    Combining them, we have relations
    \begin{equation}\label{eq:relation-between-half-twists-and-dehn-twists}
        h_{i, a}h_{i, a+1} = t_{i-1}t_i^{-2}t_{i+1}
    \end{equation}
    in $\MCG(T_n)$ for all $i$ and $a$.
    On the other hand, by Proposition \ref{prop:half-spherical-twist-and-line-bundle}, we have
    \begin{equation}
        H_{i, a}\circ H_{i, a+1} = (-)\otimes \cO_S(G_i)\lvert_{F_n}
    \end{equation}
    in $\Auteq D^b(F_n)$.
    The morphism $\Upsilon$ maps the right-hand side to $t_{i-1}t_i^{-2}t_{i+1}$ by the following observations:
    \begin{enumerate}
        \item The line bundles $\cO_S(G_i)\lvert_{F_n}$ and $\cO_{F_n}(x_{i-1}-2x_{i}+x_{i+1})$ have the same multi-degree.
        \item The group $\Pic^0(F_n)$ of multi-degree zero line bundles is contained in $\Ker \Upsilon$ (Theorem \ref{thm:autoequivalence-of-I_n-curve}).
        \item $(-) \otimes \cO(x_i) \cong T_i$ (Example \ref{ex:spherical-objects}).
        \item $\Upsilon(T_i) = t_i$ (Theorem \ref{thm:spherical-twist-and-dehn-twist}).
    \end{enumerate}
    Therefore, we have relations
    \begin{equation}\label{eq:relation-between-half-spherical-twists-and-spherical-twists}
        \Upsilon(H_{i, a})\Upsilon(H_{i, a+1}) = t_{i-1}t_i^{-2}t_{i+1}
    \end{equation}
    in $\MCG(T_n)$.
    Comparing \eqref{eq:relation-between-half-twists-and-dehn-twists} and \eqref{eq:relation-between-half-spherical-twists-and-spherical-twists}, the (forward and backward) induction on $a$ starting from $a = -1$ provides the desired identities $\Upsilon(H_{i, a}) = h_{i, a}$ for all $i$ and $a$.
\end{proof}

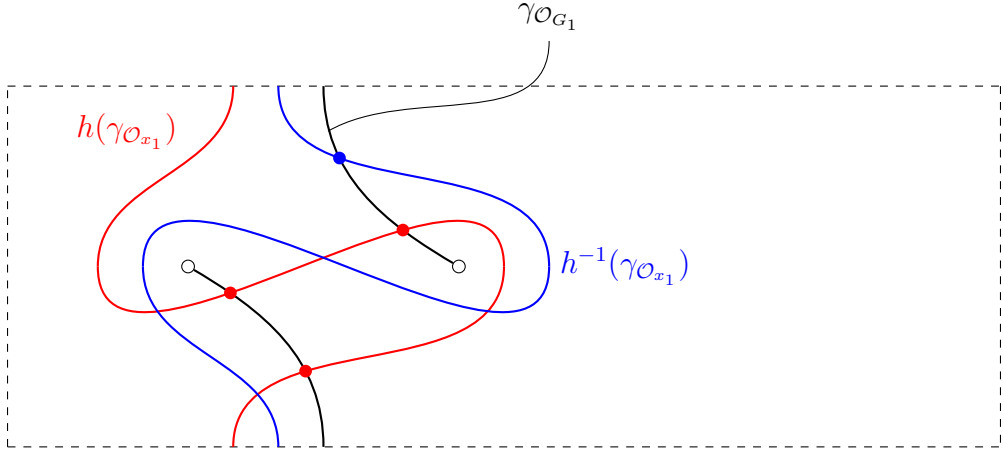
\begin{figure}[h]
    \centering
    \begin{displaymath}
        \begin{tikzpicture}[scale=1.2]
            \draw[dashed] (0,0)--(11,0);
            \draw[dashed] (0,0)--(0,4);
            \draw[dashed] (0,4)--(11,4);
            \draw[dashed] (11,4)--(11,0);

            \draw[thick, name path=black_1](3.5, 4) to[out=-90,in=150](5, 2);
            \draw[thick, name path=black_2](2, 2) to[out=-30,in=90](3.5, 0);

            \draw[thick, color=red](2.5, 4) to[out=-90,in=90](1, 2);
            \draw[thick, name path=red_2, color=red](1, 2) to[out=-90,in=90](5.5, 2);
            \draw[thick, name path=red_3, color=red](5.5, 2) to[out=-90,in=90](2.5, 0);

            \draw[thick, name path=blue_1, color=blue](3, 4) to[out=-90,in=90](6, 2);
            \draw[thick, color=blue](6, 2) to[out=-90,in=90](1.5, 2);
            \draw[thick, color=blue](1.5, 2) to[out=-90,in=90](3, 0);

            \foreach \u in {2, 5}
                {
                    \filldraw[white] (\u, 2) circle (2pt);
                    \draw[black] (\u, 2) circle (2pt);
                }

            \fill [name intersections={of=red_2 and black_1}, color=red]
            (intersection-1) circle (2pt) node [above right]{};
            \fill [name intersections={of=red_2 and black_2}, color=red]
            (intersection-1) circle (2pt) node [above right]{};
            \fill [name intersections={of=red_3 and black_2}, color=red]
            (intersection-1) circle (2pt) node [above right]{};

            \fill [name intersections={of=blue_1 and black_1}, color=blue]
            (intersection-1) circle (2pt) node [above right]{};

            \draw[](2, 3.5) node[left, color=red]{$h(\gamma_{\cO_{x_1}})$};
            \draw[](6, 2) node[right, color=blue]{$h^{-1}(\gamma_{\cO_{x_1}})$};

            \draw(6, 4.5) node[above]{$\gamma_{\cO_{G_1}}$};
            \draw(6, 4.5) to[out=-90,in=30](3.55, 3.5);
        \end{tikzpicture}
    \end{displaymath}
    \caption{The two possible candidates for $\gamma$.} \label{candidates_for_gamma}
\end{figure}

\subsection{Autoequivalences of elliptic surfaces}
In this subsection, we give a refinement of Uehara's result Theorem \ref{thm:autoequivalence-of-elliptic-surface} by using half-spherical twists.
Following \cite{MR3568337} we define a subgroup $B \subset \Auteq D^b(S)$ by
\begin{equation}
    B = \langle T_{\cO_G(a)} \mid G \subset S \text{ is an irreducible component of a reducible fiber, } a \in \bZ\rangle.
\end{equation}

\begin{remark}\label{rem:alternative-description-of-minus-two-curves}
    If we assume that $S$ is projective and its Kodaira dimension $\kappa(S)$ is non-zero, the group $B$ coincides with the one in Theorem \ref{thm:autoequivalence-of-elliptic-surface}
    \begin{equation}
        B = \langle T_{\cO_G(a)} \mid G \subset S \text{ is a $(-2)$-curve, } a \in \bZ\rangle
    \end{equation}
    as mentioned in \cite[Section 3]{MR3568337}.
    We give a brief explanation of this fact.
    Let $G \subset S$ be a $(-2)$-curve and set $d \coloneq G \cdot F$.
    The adjunction formula and the canonical bundle formula for elliptic surfaces \cite[Corollary V.12.3]{MR2030225} computes $K_S \cdot G$ as
    \begin{equation}
        0 = K_S \cdot G = d \cdot \left(\chi(\cO_S) - 2 \chi(\cO_C) + \sum_i (1-m_i^{-1})\right)
    \end{equation}
    where $m_i$ is the multiplicity of the $i$-th multiple fiber.
    On the other hand, \cite[Proposition V.12.5]{MR2030225} says that the Kodaira dimension $\kappa(S)$ is non-zero if and only if the value $\delta \coloneq \chi(\cO_S) - 2 \chi(\cO_C) + \sum_i (1-m_i^{-1})$ is non-zero.
    Therefore, we have $d = 0$ and hence $G$ is contained in a fiber.
\end{remark}
By Corollary \ref{cor:FM-to-Auteq}, Proposition \ref{prpo:forget-base}, and Proposition \ref{prop:twist-functor-is-relative-fm}, the group $B$ can be viewed as a subgroup of $\FM_T(S)$. We have a natural ``restriction'' morphism
\begin{equation}\label{eq:definition-of-res}
    \res_F \colon B \hookrightarrow \FM_T(S) \to \FM_{\bC}(F) \hookrightarrow \Auteq D^b(F), \quad T_{\cO_G(a)} \mapsto \begin{cases}
        H_{\cO_G(a)} & G \subset F      \\
        \id          & \text{otherwise}
    \end{cases}
\end{equation}
for each fiber $F$.
\begin{remark}\label{rem:action-on-grothendieck-group-is-trivial}
    Consider the composition
    \begin{equation}
        B \xrightarrow{\res_F} \Auteq D^b(F) \xrightarrow{(-)\vert_{\Perf(F)}} \Auteq \Perf(F) \to \Aut K_0(F),
    \end{equation}
    where $K_0(F)$ is the Grothendieck group of $\Perf(F)$.
    Note that the map $\Auteq D^b(F) \xrightarrow{(-)\vert_{\Perf(F)}} \Auteq \Perf(F)$ is well-defined due to \cite[Proposition 1.11]{MR2437083}.

    We claim that this map is trivial.
    For $\cF \in \Perf(F)$ and $\cE = \cO_G(a)$, recall that there is an exact triangle
    \begin{equation}\label{eq:exact-triangle-of-half-spherical-twist}
        \Phi_{j^*j_*(\cE' \boxtimes \cE)}(\cF) \to \cF \to H_{\cE}(\cF) \xrightarrow{+1}
    \end{equation}
    in $\Perf(F)$, where $\cE' = \CHom(\cE, i^!\cO_S)$ (see proof of Proposition \ref{prop:empty-intersection}).
    On the other hand, there is another exact triangle
    \begin{equation}
        \cE' \boxtimes \cE[1] \to j^*j_*(\cE' \boxtimes \cE) \to \cE'\boxtimes \cE \xrightarrow{+1}
    \end{equation}
    by \cite[Corollary 11.4]{MR2244106}\footnote{Although the statement given there is for smooth hypersurface, the proof remains valid for our cases.} and $\cO_{S \times_C S}(-F\times F)\vert_{F \times F} \cong \cO_{F \times F}$.
    It induces an exact triangle
    \begin{equation}\label{eq:exact-triangle-of-cocone}
        \Gamma(\cE' \otimes \cF) \otimes_\bC \cE[1] \to \Phi_{j^*j_*(\cE' \boxtimes \cE)}(\cF) \to  \Gamma(\cE' \otimes \cF) \otimes_\bC \cE \xrightarrow{+1}.
    \end{equation}
    Here $\Gamma(\cE' \otimes \cF)$ is a bounded complex of finite dimensional vector spaces as follows: First, since $\cE' \in D^b(F)$ and $\cF \in \Perf(F)$, one has $\cE' \otimes \cF \in D^b(F)$. Second, the functor $\Gamma$ is the pushforward along the proper morphsim $F \to \Spec \bC$ so that $\Gamma(\cE' \otimes \cF) \in D^b(\Spec \bC)$.
    Therefore, the triangle \eqref{eq:exact-triangle-of-cocone} is contained in $\Perf(F)$ and implies $[\Phi_{j^*j_*(\cE' \boxtimes \cE)}(\cF)] = 0$ in $K_0(F)$.
    Then \eqref{eq:exact-triangle-of-half-spherical-twist} gives $[H_{\cE}(\cF)] = [\cF]$ in $K_0(F)$.

    Note that a similar claim for non-perfect objects does not hold.
    For example, the $2$-spherical object $i_*\cO_G(a)$ satisfies $T_{i_*\cO_G(a)}(i_*\cO_G(a)) \cong i_*\cO_G(a)[-1]$ in $D^b(S)$ \cite[Excersice 8.5]{MR2244106}.
    Then we have $i_*H_{\cO_G(a)}(\cO_G(a)) \cong i_*\cO_G(a)[-1]$ by Corollary \ref{cor:compatibility-of-half-spherical-twists-and-spherical-twists}, and we can remove $i_*$ from both sides as $\cO_G(a)$ is a sheaf.
\end{remark}

In order to capture the whole information of the group $B$, we consider the restriction morphisms $\res_F$ for all the reducible fibers $F$ at the same time.
Let $\res = \prod_{F \colon \text{reducible fiber}} \res_F$ be the product of these restriction morphisms.
\begin{proposition}\label{prop:kernel-of-res}
    We have an exact sequence
    \begin{equation}
        1 \to \langle (-) \otimes \cO_S(F) \mid F \text{: reducible fiber }\rangle \to B \xrightarrow{\res} \prod_{F \colon \text{reducible fiber}} \Auteq D^b(F).
    \end{equation}
\end{proposition}
\begin{proof}
    Let $D \subset C$ be the union of all points whose fibers are reducible.
    Let $U \coloneq S \setminus \bigcup_{F \colon \text{reducible fiber}}F = \pi^{-1} (C \setminus D)$ be the complement of the reducible fibers.
    Notice that the restriction $\FM_C(S) \to \FM_{C\setminus D}(U)$ (Proposition \ref{prop:restriction-to-fiber}) where takes all the elements of $B$ to the identity.
    In particular, every equivalence $\Phi \in B$ satisfies $\Phi(\cO_x) \cong \cO_x$ for every point $x \in U$.
    For $x \in S\setminus U$, let $F \subset S$ be the reducible fiber containing $x$ and $i_F \colon F \hookrightarrow S$ be the inclusion.
    We have
    \begin{equation}
        \Phi(\cO_x) \cong i_{F*}(\res_F(\Phi)(\cO_x))
    \end{equation}
    by Proposition \ref{prop:restriction-to-fiber}.
    Therefore, $\Phi(\cO_x) \cong \cO_x$ holds for every $\Phi \in \Ker(\res) = \bigcap_{F \colon \text{reducible fiber}} \Ker(\res_F)$ and every $x \in S$.
    This implies the inclusion $\Ker(\res) \subset \Pic(S)$ by Lemma \ref{lem:criterion-to-be-standard-functor} and hence $\Ker(\res) = B \cap \Ker \left(\Pic(S) \to \prod_{F \colon \text{reducible fiber}} \Pic(F)\right)$.

    We then prove the identity
    \begin{equation}\label{eq:kernel-of-res}
        B \cap \Ker \left(\Pic(S) \to \prod_{F \colon \text{reducible fiber}} \Pic(F)\right) = \langle (-) \otimes \cO_S(F) \mid F \text{: reducible fiber }\rangle.
    \end{equation}
    The inclusion $\Ker \left(\Pic(S) \to \prod_{F \colon \text{reducible fiber}} \Pic(F)\right) \supset (\text{RHS})$ is satisfied as $\cO_S(F)\vert_F$ is trivial by general theory of fibration.
    We also have $B \supset (\text{RHS})$ by \cite[Lemma 4.15 (i)(2)]{MR2198807}.

    To prove the other inclusion, let $\cL \in \Pic(S)$ be a line bundle such that $(-) \otimes \cL$ is an element of the left-hand side.
    The first observation in this proof shows $\cL|_U \cong \cO_U$.
    Together with $\cL \in \Ker \left(\Pic(S) \to \prod_{F \colon \text{reducible fiber}} \Pic(F)\right)$, $\cL$ restricts trivially to all fibers of $\pi \colon S \to C$.
    Then \cite[Corollary III.12.9]{MR0463157} says the sheaf $\cM \coloneq R^0\pi_*\cL$ is a line bundle on $C$, and the counit morphism $\pi^*\cM \to \cL$ is an isomorphism.
    Also, we have $\cM \vert_{C \setminus D} \cong R^0\pi_*(\cL\vert_U) \cong R^0\pi_*(\cO_U) \cong\cO_{C \setminus D}$ (the first isomorphism is projection formula and the third one is Zariski's main theorem \cite[Corollary III.11.3]{MR0463157}).
    Therefore, $\cM$ is of the form $\cO_C(\sum_{p \in D} a_p p)$ for some integers $a_p$ and hence $\cL \cong \pi^*\cM \in (\text{RHS})$.
\end{proof}

\begin{theorem}\label{thm:description-of-B}
    Let $\pi \colon S \to C$ be a relatively minimal, smooth projective elliptic surface and $F^{(1)}, \dots, F^{(r)}$ be its reducible fibers.
    Suppose that each $F^{(j)}$ is of type $\rom{1}_{n_j}$ for some $n_j \geq 2$.
    Let $B \subset \Auteq D^b(S)$ be the subgroup defined by
    \begin{equation}
        B = \langle T_{\cO_G(a)} \mid G \subset S \text{ is an irreducible component of a reducible fiber, } a \in \bZ\rangle.
    \end{equation}
    Then the following statements hold.
    \begin{enumerate}
        \item The exact sequence in Proposition \ref{prop:kernel-of-res} together with the morphism $\Upsilon$ in Theorem \ref{thm:spherical-twist-and-dehn-twist} induces the exact sequence
              \begin{equation}
                  1 \to \langle (-)\otimes \cO_S(F^{(j)}) \mid 1 \leq j \leq r \rangle \to B \xrightarrow{\psi} \prod_{j = 1}^r \MCG(T_{n_j}),
              \end{equation}
              where $\psi$ is the composition of $\res$ and $\Upsilon$.
        \item Let $p_j \colon \prod_{j=1}^r \MCG(T_{n_j}) \to \MCG(T_{n_j})$ be the projection onto the $j$-th component. Then the image $\Image(\psi)$ is the product of its projections onto the individual components $\Image(\psi) = \prod_{j = 1}^r \Image(p_j \circ \psi)$.
        \item The set of half twists along the curves $\gamma_{\cO_{G_1}(-1)}, \dots, \gamma_{\cO_{G_{n_j}}(-1)}, \gamma_{\cO_{G_1}}, \dots, \gamma_{\cO_{G_{n_j}}}$ on $T_{n_j}$ generates the subgroup $\Image(p_j \circ \psi) \subset \MCG(T_{n_j})$ (see Figure \ref{fig:generators-for-image-of-B}).
        \item If the Kodaira dimension $\kappa(S)$ is non-zero, then the group $B$ coincides with the group
              \begin{equation}
                  \langle T_{\cO_G(a)} \mid G \subset S \text{ is a $(-2)$ curve, } a \in \bZ\rangle,
              \end{equation}
              which was originally denoted by $B$ in \cite{MR3568337}.
    \end{enumerate}
\end{theorem}

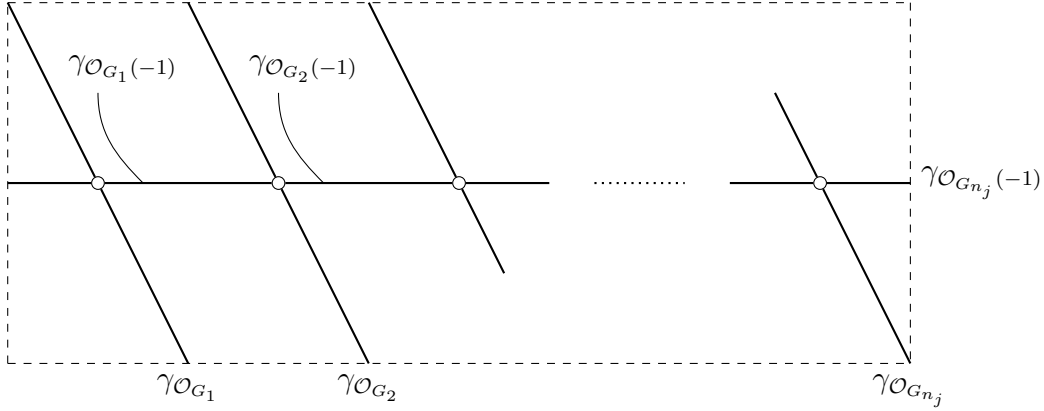
\begin{figure}[h]
    \centering
    \begin{displaymath}
        \begin{tikzpicture}[scale=1.2]
            \draw[dashed] (0,0)--(10,0);
            \draw[dashed] (0,0)--(0,4);
            \draw[dashed] (0,4)--(10,4);
            \draw[dashed] (10,4)--(10,0);

            \draw[thick] (0, 2)--(1, 2);
            \draw[thick] (1, 2)--(3, 2);
            \draw[thick] (3, 2)--(5, 2);
            \draw[thick] (5, 2)--(6, 2);
            \draw[thick] (8, 2)--(9, 2);
            \draw[thick] (9, 2)--(10, 2);

            \draw[thick] (0, 4)--(1, 2);
            \draw[thick] (1, 2)--(2, 0);
            \draw[thick] (2, 4)--(4, 0);
            \draw[thick] (4, 4)--(5, 2);
            \draw[thick] (5, 2)--(5.5, 1);
            \draw[thick] (8.5, 3)--(9, 2);
            \draw[thick] (9, 2)--(10, 0);

            \draw[dotted, thick] (5+1.5, 2)--(9-1.5, 2);
            \foreach \u in {1, 3, 5, 9}
                {
                    \filldraw[white] (\u, 2) circle (2pt);
                    \draw[black] (\u, 2) circle (2pt);
                }

            \draw(2, 0) node[below]{$\gamma_{\cO_{G_1}}$};
            \draw(4, 0) node[below]{$\gamma_{\cO_{G_2}}$};
            \draw(10, 0) node[below]{$\gamma_{\cO_{G_{n_j}}}$};

            \draw(2, 3) node[above left]{$\gamma_{\cO_{G_1}(-1)}$};
            \draw(1, 3) to[out=-90,in=135](1.5, 2);
            \draw(4, 3) node[above left]{$\gamma_{\cO_{G_2}(-1)}$};
            \draw(3, 3) to[out=-90,in=135](3.5, 2);

            \draw(10, 2) node[right]{$\gamma_{\cO_{G_{n_j}}(-1)}$};

        \end{tikzpicture}
    \end{displaymath}
    \caption{The simple arcs such that the half twists along them generate $\Image(B \xrightarrow{p_j \circ \psi} \MCG(T_{n_j}))$.} \label{fig:generators-for-image-of-B}
\end{figure}
\begin{proof}
    The morphism $\psi$ in (1) is defined by the composition
    \begin{equation}
        \psi = \prod_{j=1}^r \Upsilon_j \circ \res_j \colon B \to \Auteq D^b(F^{(j)}) \to \MCG(T_{n_j}),
    \end{equation}
    where $\Upsilon_j$ is the morphism $\Upsilon$ in Theorem \ref{thm:spherical-twist-and-dehn-twist} for the reducible fiber $F^{(j)}$.
    We need to prove that the morphism
    \begin{equation}
        \Image\left(\res_j \colon B \to \Auteq D^b(F^{(j)})\right) \xrightarrow{\Upsilon_j} \MCG(T_{n_j})
    \end{equation}
    is injective for each $j$.

    Suppose that $\Phi \in B$ satisfies
    \begin{equation}\label{eq:condition-for-Phi}
        \res_j(\Phi) = \varphi_*(\cL \otimes (-))[n] \in \Ker(\Upsilon_j) = \Aut^0(F^{(j)})\times \Pic^0(F^{(j)}) \times \bZ[1]
    \end{equation}
    for some $\varphi \in \Aut^0(F^{(j)})$, $\cL \in \Pic(F^{(j)})$, and $n \in \bZ$.
    We want to show $\res_j(\Phi) = \id$.
    Note that we may assume that $\Phi$ is a composition of the functors
    \begin{equation}
        \{T_{\cO_{G}(a)} \mid G \subset F^{(j)} \colon \text{irreducible component}, a \in \bZ\}
    \end{equation}
    because the contribution from the other reducible fibers does not change the image $\res_j(\Phi)$.
    Then we have
    \begin{equation}\label{eq:condition_for_Phi_1}
        \Phi(\cO_x) = \begin{cases}
            \cO_x               & x \in S \setminus F^{(j)}, \\
            \cO_{\varphi(x)}[n] & x \in F^{(j)}.
        \end{cases}
    \end{equation}
    Now Lemma \ref{lem:criterion-to-be-standard-functor} shows
    \begin{equation}\label{eq:condition_for_Phi_2}
        \Phi(-) = \widetilde{\varphi}_*(- \otimes \widetilde{\cL})[m]
    \end{equation}
    for some $\widetilde{\varphi}\in \Aut{S}$, $\widetilde{L} \in \Pic{S}$, and $m \in \bZ$.
    Comparing \eqref{eq:condition_for_Phi_1} and \eqref{eq:condition_for_Phi_2}, we have $m = n = 0$ and
    \begin{equation}
        \widetilde{\varphi}(x) = \begin{cases}
            x          & x \in S \setminus F^{(j)}, \\
            \varphi(x) & x \in F^{(j)}.
        \end{cases}
    \end{equation}
    Since $S \setminus F^{(j)}$ is dense open in $S$, we have $\widetilde{\varphi} = \id_S$ and then $\varphi = \id_{F^{(j)}}$.
    Therefore, $\Phi = (-) \otimes \widetilde{\cL}$ and $\res_j(\Phi) = (-) \otimes \widetilde{\cL}\vert_{F^{(j)}}$.
    Combined with \eqref{eq:condition-for-Phi}, $\cL = \widetilde{\cL}\lvert_{F^{(j)}} \in \Pic^0(F^{(j)})$ follows.
    Since $\Phi = (-) \otimes \widetilde{\cL}$ is in $B \cap \Pic(S)$, the line bundle $\widetilde{\cL}$ is an element of the group $\Gamma \subset \Pic(S)$ appearing in the proof of the previous proposition.
    The condition $\widetilde{\cL}\lvert_{F^{(j)}} \in \Pic^0(F^{(j)})$ means that the restriction $\widetilde{\cL} \lvert_{G_{jk}}$ to each irreducible component $G_{jk} \subset F^{(j)}$ is trivial.
    Again by the same argument in the proof of the previous proposition, the line bundle $\cL = \widetilde{\cL}\lvert_{F^{(j)}}$ must be trivial.
    This shows $\res_j(\Phi) = (-) \otimes \cL = \id$.

    The statement (2) follows from the fact that for any generator $T_{\cO_G(a)} \in B$ there exists a reducible fiber $F$ such that $G \subset F$.

    The image $\Image(p_j \circ \psi \colon B \to \MCG(T_{n_j}))$ does not change when we restrict the domain to the subgroup $B_j = \langle T_{\cO_G(a)} \mid G \subset F^{(j)}, a \in \bZ \rangle \subset B$.
    To show (3), it is enough to prove the group $B_j$ is generated by $T_{\cO_{G_1}}, \dots, T_{\cO_{G_{n_j}}}, T_{\cO_{G_1}(-1)}, \dots, T_{\cO_{G_{n_j}}(-1)}$.
    This is an immediate consequence of the relation $T_{\cO_G(a)} \circ T_{\cO_G(a+1)} = (-)\otimes \cO_S(G)$ for a surface $S$ and a $(-2)$-curve $G$ \cite[Lemma 4.15 (i)]{MR2198807}.

    The last statement (4) is just a rephrasing of Remark \ref{rem:alternative-description-of-minus-two-curves}.
\end{proof}

\begin{remark}\label{rm:psi-is-not-surjective}
    The morphism $\psi \colon B \to \prod_{j=1}^r \MCG(T_{n_j})$ is not surjective.
    We can prove this as follows.
    Assume $r=1$ for simplicity and set $n=n_1$, $F = F^{(1)} = G_1 \cup \cdots \cup G_n$.
    The morphism $\psi$ is given by the composition
    \begin{equation}
        B \xrightarrow{\res} \Auteq D^b(F) \xrightarrow{\Upsilon} \MCG(T_n).
    \end{equation}
    Consider the Dehn twist $t_{\gamma_{\cO_{x_1}}}$ along the curve $\gamma_{\cO_{x_1}} \subset T_{n}$ (see Example \ref{ex:corresponding-curves-via-hms}).
    We know the twist functor $T_{\cO_{x_1}} = - \otimes \cO(x_1)$ is mapped to $t_{\gamma_{\cO_{x_1}}}$ by $\Upsilon$.
    Then the set $\Upsilon^{-1}(\gamma_{\cO_{x_1}})$ consists of the elements of the form $T_{\cO_{x_1}} \circ \Phi$ for $\Phi \in \Aut^0(F) \times \Pic^0(F) \times \bZ[1]$ (Theorem \ref{thm:autoequivalence-of-I_n-curve}).
    In particular, all such elements non-trivially act on the Grothendieck group $K_0(F)$ of $\Perf(F)$, as we have $K_0(F) \cong H^0(F, \bZ) \oplus \Pic(F)$ \cite[Proposition 4.2]{MR3713873}.
    Then by Remark \ref{rem:action-on-grothendieck-group-is-trivial}, we have $\res(B) \cap \Upsilon^{-1}(\gamma_{\cO_{x_1}}) = \emptyset$.
    This shows that $\psi$ is not surjective.
\end{remark}
\begin{remark}
    Let $B_j = \langle T_{\cO_G(a)} \mid G \subset F^{(j)}, a \in \bZ \rangle$ be the subgroup of $B$.
    Since $B_j$ and $B_{j'}$ commute with each other for $j \neq j'$, we have a surjective morphism $\prod_j B_j \twoheadrightarrow B$.
    On the other hand, this morphism is not injective in general.
    For example, if $C = \bP^1$ then all the fibers of $\pi$ are linearly equivalent to each other.
    Then the elements $- \otimes \cO_S(F^{(j)}) \in B_j$ have nontrivial relations in $B$.
\end{remark}

\begin{remark}
    Let
    $
        \overline{S} \coloneqq \Spec \bC[x,y,z]/(x y - z^{n+1})
        \subset \Spec \bC[x,y,z] \cong \bA^3
    $
    be an affine surface with an $A_n$-singularity at the origin
    and
    $
        \overline{f} \colon \overline{S} \to \Spec \bC[z]
    $
    be the projection to the $z$-axis.
    Let further
    $
        \pi \colon S \to \overline{S}
    $
    be the minimal resolution
    and
    $
        f \coloneqq \overline{f} \circ \pi
    $
    be the composite morphism.
    The fiber
    $
        F
        \coloneqq f^{-1}(0)
    $
    is the union
    of the exceptional locus
    $
        E \coloneqq \bigcup_{i=1}^n G_i
    $
    of the resolution $\pi$
    and two copies of the affine line.
    Note that if $n=1$ the same situation can also be obtained from Example \ref{example_from_Atiyah_flop} by cutting with the hyperplane $\{z=w\}$,
    which are strict transforms of the
    coordinate lines
    $
        \Spec \bC[x,y,z]/(xy, z)
        \cong \Spec \bC[x] \cup \Spec \bC[y].
    $
    There is a homomorphism
    \begin{align}
        \rho \colon B_n^{(1)} \to \Auteq(D^b(S))
    \end{align}
    from the \emph{affine braid group}
    $
        B_n^{(1)}
    $
    generated by $\sigma_i$ for $i=0,\ldots,n$
    with relations
    \begin{align}
        \sigma_i \sigma_{i+1} \sigma_i & = \sigma_{i+1} \sigma_i \sigma_{i+1}, &  & i = 0, \ldots, n \quad(\sigma_{n+1} \coloneqq \sigma_0), \\
        \sigma_i \sigma_j              & = \sigma_j \sigma_i,                  &  & |i-j|>2,
    \end{align}
    sending $\sigma_0$ to $T_{\omega_E}$
    and $\sigma_i$ to
    $
        T_{\cO_{G_i}(-1)}
    $
    for $i=1, \ldots, n$.
    Proposition \ref{prop:restriction-to-fiber} gives
    a homomorphism
    \begin{align}
        \res_F \colon \Image \rho \to \Auteq D^b(F).
    \end{align}
    One has an equivalence
    \begin{align} \label{eq:HMS}
        D^b(F) \simeq D^b(\cW(\Sigma_{0,n+3}))
    \end{align}
    where $\cW(\Sigma_{0,n+3})$
    is the wrapped Fukaya category of a surface $\Sigma_{0,n+3}$
    of genus 0 with $n+3$ punctures
    equipped with a suitable grading
    \cite{MR3073884, MR3735868, MR3830878}.
    The composite $\res_F \circ \rho$
    together with the equivalence \eqref{eq:HMS}
    gives an affine braid group action
    on $D^b(\cW(\Sigma_{0,n+3}))$,
    which factors through the action of the graded mapping class group
    of $\Sigma_{0,n+3}$
    just as in the case of Kodaira fibers discussed above.
    This implies the injectivity of $\rho$,
    which is one of the main results of
    \cite{MR2629510}.
\end{remark}

\appendix
\section{Proof of Lemma \ref{lem:criterion-to-be-standard-functor} and Proposition \ref{prop:twist-functor-is-relative-fm}}\label{section:appendix}
\subsection{Proof of Lemma \ref{lem:criterion-to-be-standard-functor}}\label{subsection:proof-of-criterion-to-be-standard-functor}
In this subsection, we denote the product over the base field $k$ by $\times$ for simplicity.
\begin{lemma}[Lemma \ref{lem:criterion-to-be-standard-functor}]\label{lem:criterion-to-be-standard-functor-re}
    Let $k$ be an algebraically closed field with characteristic zero.
    Let $X$ and $Y$ be connected quasi-projective schemes over $k$, with $X$ reduced.
    Let $\cP \in D^b(X \times Y)$ be an object whose support is proper over $X$.
    Suppose that the integral functor $\Phi = \Phi_{\cP}$ satisfies the following condition:

    \begin{quote}
        For any closed point $x \in X$, there exists a closed point $y \in Y$ and an integer $n_x$ such that $\Phi(\cO_x) \cong \cO_y[n_x]$.
    \end{quote}

    Then there exists a morphism $f \colon X \to Y$, a line bundle $\cL \in \Pic X$, and an integer $n$ such that $\Phi \cong f_*(\cL \otimes -)[n]$.
\end{lemma}
\begin{proof}
    We basically repeat the proof of \cite[Lemma 2.14]{MR2155085} with necessarily modification.
    \textbf{From here to the end of the proof, the symbols such as $\pi_{X*}$, $i_*$, or $\otimes$ denote non-derived functors. We add $R$ or $L$ to derived functors to avoid confusion.}

    Let $\pi_X \colon X \times Y \to X$ and $\pi_Y \colon X \times Y \to Y$ be the natural projections and $i_x \colon \{x\} \times Y \hookrightarrow X \times Y$ be the natural inclusion for each closed point $x \in X$.
    \begin{equation}
        \begin{tikzcd}
            \{x\} \times Y \ar[r, "i_x"] \ar[d] & X \times Y \ar[d, "\pi_X"] \ar[r, "\pi_Y"]& Y \\
            \{x\} \ar[r] & X
        \end{tikzcd}
    \end{equation}

    We have $Li_x^*(\cP) \cong \cO_{(x, y)}[n_x]$ from the assumption.
    By Lemma \ref{lem:degree-is-locally-constant} below $x \mapsto n_x$ is a locally constant function on the set of closed points of $X$,
    and hence constant since we assumed that $X$ is connected.
    Let $n = n_x$ be the constant value.
    We may assume $n = 0$ by replacing $\cP$ with its $n$-shift.
    Then by \cite[Lemma 4.3]{MR1651025} the complex $\cP$ is a coherent sheaf on $X \times Y$ and flat over $X$.
    Note that then $\cP\vert_{\{x\} \times Y} = i_x^*\cP \cong Li_x^*\cP \cong \cO_{(x, y)}$ holds for each closed point $x \in X$.

    Next, we claim that the canonical morphism
    \begin{equation}\label{eq:canonical_surjection}
        \pi_X^* \pi_{X*}(\cP \otimes \pi_Y^*\cA^{\otimes d}) \to \cP \otimes \pi_Y^*\cA^{\otimes d}
    \end{equation}
    is surjective for sufficiently large $d$, where $\cA$ is a very ample line bundle on $Y$.
    To apply \cite[Theorem III.8.8]{MR0463157} which requires properness, let $i \colon Z \hookrightarrow X \times Y$ be the schematic support of $\cP$ and consider the following diagram:
    \begin{equation}
        \begin{tikzcd}
            & Z \ar[d, "i"] \ar[ldd, "p_X"', bend right] \ar[rdd, "p_Y", bend left]& \\
            & X \times Y \ar[ld, "\pi_X"] \ar[rd, "\pi_Y"']& \\
            X & & Y.
        \end{tikzcd}
    \end{equation}
    Note that $p_X \colon Z \to X$ is proper as $Z_{\red} = \Supp(\cP)$ is proper over $X$ by assumption and \cite[\href{https://stacks.math.columbia.edu/tag/0CYK}{Tag 0CYK}]{stacks-project}.
    Together with the fact that $Z$ and $X$ are quasi-projective, $p_X$ is projective.
    In addition, the line bundle $\pi_Y^*\cA$ is very ample on $X \times Y$ relative to $X$, and its restriction $p_Y^*\cA = (\pi_Y^*\cA)\vert_Z$ is very ample on $Z$ relative to $X$.
    Therefore, we have a surjection
    \begin{equation}
        p_X^* p_{X*}(\cP\vert_Z \otimes p_Y^*\cA^{\otimes d}) \twoheadrightarrow \cP\vert_Z \otimes p_Y^*\cA^{\otimes d}
    \end{equation}
    for large $d$ by \cite[Theorem III.8.8]{MR0463157}.
    By applying $i_*$ there is a horizontal surjective morphism
    \begin{equation}\label{eq:canonical_surjection_2}
        \begin{tikzcd}
            i_*p_X^* p_{X*}(\cP\vert_Z \otimes p_Y^*\cA^{\otimes d}) \ar[r, twoheadrightarrow] \ar[d, "\sim"] & i_*(\cP\vert_Z \otimes p_Y^*\cA^{\otimes d}) \ar[d, "\sim"] \\
            i_*i^*\pi_X^*\pi_{X*}(\cP \otimes \pi_Y^*\cA^{\otimes d}) & \cP \otimes \pi_Y^*\cA^{\otimes d}.
        \end{tikzcd}
    \end{equation}
    Here the vertical isomorphisms are due to $p_X = \pi_X \circ i$ and
    \begin{align}
        i_*(\cP\vert_Z \otimes p_Y^*\cA^{\otimes d}) & = i_*(\cP\vert_Z \otimes i^*\pi_Y^*\cA^{\otimes d})  \\
                                                     & \cong i_*(\cP\vert_Z) \otimes \pi_Y^*\cA^{\otimes d} \\
                                                     & \cong \cP \otimes \pi_Y^*\cA^{\otimes d},
    \end{align}
    where the latter follows from the non-derived projection formula and $i_*(\cP\vert_Z) \cong \cP$.
    Then our claim follows from \eqref{eq:canonical_surjection_2} and the surjection $\id \to i_*i^*$.

    On the other hand, the sheaf $\pi_{X*}(\cP \otimes \pi_Y^*\cA^{\otimes d})$ is a line bundle as follows.
    First, there is an isomorphism $\pi_{X*}(\cP \otimes \pi_Y^* \cA^{\otimes d}) \cong p_{X*}(P\vert_Z \otimes p_Y^* \cA^{\otimes d})$
    by the projection formula as above.
    In addition, we have
    \begin{align}\label{eq:rank-function}
        H^0((\cP\vert_Z \otimes p_Y^* \cA^{\otimes d})\vert_{p_X^{-1}(x)}) & \cong H^0((\cP \vert_{\{x\} \times Y})\vert_{p_X^{-1}(x)} \otimes p_Y^* \cA^{\otimes d}\vert_{p_X^{-1}(x)}) \\                                                     & \cong H^0(\cO_{(x, y)}) \\
                                                                           & \cong k
    \end{align}
    for every closed point $x \in X$.
    Here the first isomorphism is due to $p_X^{-1}(x) = Z \cap (\{x\} \times Y)$, the second is due to $\cP\vert_{\{x\} \times Y} \cong \cO_{(x, y)}$, and the third is due to $k = \overline{k}$.
    Then \cite[Section 5, Corollary 2]{MR282985} shows that the sheaf $p_{X*}(P\vert_Z \otimes p_Y^* \cA^{\otimes d})$ (and hence $\pi_{X*}(\cP \otimes \pi_Y^* \cA^{\otimes d})$) is locally free of rank one,
    because $p_X \colon Z \to X$ is proper, $X$ is reduced, $\cP\vert_Z$ is flat over $X$, and \eqref{eq:rank-function} (see also \cite[Corollary III.12.9]{MR0463157}).
    Therefore, the surjection \eqref{eq:canonical_surjection} gives rise to another surjection
    \begin{equation}\label{eq:surjection}
        \cO_{X \times Y} \twoheadrightarrow \cP \otimes \pi_X^*\cM^\vee \otimes \pi_Y^*\cA^{\otimes d},
    \end{equation}
    in which $\cM$ denotes the line bundle $\pi_{X*}(\cP \otimes \pi_Y^* \cA^{\otimes d})$.
    The sheaf $\cP \otimes \pi_X^*\cM^\vee \otimes \pi_Y^*\cA^{\otimes d}$ is flat over $X$ and there is an isomorphism
    \begin{equation}
        (\cP \otimes \pi_X^*\cM^\vee \otimes \pi_Y^*(\cA^{\otimes d}))\vert_{\{x\} \times Y} \cong \cO_{(x, y)}.
    \end{equation}
    It means that the surjection \eqref{eq:surjection} gives rise to a morphism $f \colon X \to \Hilb_{Y}^1$ and \eqref{eq:surjection} is identified with the pullback of the universal quotient
    \begin{equation}
        (f \times \id)^*(\cO_{\Hilb_{Y}^1 \times Y} \to \cU).
    \end{equation}
    Then, in view of $Y \cong \Hilb_{Y}^1$ and $\cU \cong \cO_{\Delta}$, we have
    \begin{align}
        \cP & \cong (f \times \id)^*\cO_{\Delta} \otimes \pi_X^*\cM \otimes \pi_Y^*(\cA^{\otimes d})^\vee \\
            & \cong \cO_{\Gamma_f} \otimes \pi_X^*\cM \otimes \pi_Y^*(\cA^{\otimes d})^\vee               \\
            & \cong \cO_{\Gamma_f} \otimes^L L\pi_X^*\cM \otimes^L L\pi_Y^*(\cA^{\otimes d})^\vee
    \end{align}
    where $\Gamma_f$ is the graph of $f$.
    This shows that $\Phi \cong (\cA^{\otimes d})^\vee \otimes Rf_*(\cM \otimes -) \cong Rf_*(f^* (\cA^{\otimes d})^\vee \otimes \cM \otimes -)$ as desired.
\end{proof}

\begin{lemma}\label{lem:degree-is-locally-constant}
    Let $k$ be an algebraically closed field of characteristic zero.
    Let $X$ and $Y$ be schemes separated of finite type over $k$ and $\cP \in D^b(X \times Y)$ be an object whose support is proper over $X$.
    Let $i_x \colon \{x\} \times Y \hookrightarrow X \times Y$ be the natural inclusion for a closed point $x \in X$.
    Then the set
    \begin{equation}
        \left\{x \in X \mid x\text{ is a closed point s.t. }\cH^m(i_x^*\cP) = 0 \text{ for } m \neq 0\right\}
    \end{equation}
    is the set of closed points of an open subset of $X$.
\end{lemma}
\begin{proof}
    If $Y$ is proper over $k$, then the claim is a special case of \cite[Lemma 3.1.6]{bridgeland2002fourier}.
    For general $Y$, we take a compactification of $Y$, i.e.~a proper scheme $\overline{Y}$ over $k$ with an open immersion $\iota \colon Y \hookrightarrow \overline{Y}$.
    We can always find such $\overline{Y}$ by Nagata's compactification theorem \cite[\href{https://stacks.math.columbia.edu/tag/0F41}{Tag 0F41}]{stacks-project}.
    Then for every closed point $x \in X$ we have cartesian squares
    \[
        \begin{tikzcd}
            \{x\} \times Y \ar[r, hookrightarrow, "\iota"] \ar[d, hookrightarrow, "i_x"] & \{x\} \times \overline{Y} \ar[r] \ar[d, hookrightarrow, "\overline{i_x}"]& \{x\} \ar[d, hookrightarrow]\\
            X \times Y \ar[r, "\id_X \times \iota"] & X \times \overline{Y} \ar[r] & X,
        \end{tikzcd}
    \]
    where $i_x$ and $\overline{i_x}$ are the natural inclusions.
    The push-forward $\overline{\cP} = (\id_X \times \iota)_* \cP$ is an object of $D^b(X \times \overline{Y})$ by the assumption of proper support and hence the statement holds for $X$, $\overline{Y}$, and $\overline{\cP}$.
    Therefore, it is enough to show the equivalence
    \begin{equation}
        \cH^m(i_x^* \cP) = 0 \Leftrightarrow \cH^m(\overline{i_x}^*(\overline{\cP})) = 0
    \end{equation}
    for each $m \in \bZ$.
    Notice that, since $\iota$ is an open immersion, we have the flat base change isomorphism $\overline{i_x}^*(\overline{\cP}) \cong \iota_*i_x^* \cP$.
    In addition, the higher direct images of the open immersion $\iota$ vanish, and thus the push-forward $\iota_*$ is non-derived so that commutes with the cohomology sheaf: $\cH^m \circ \iota_* \cong \iota_* \circ \cH^m$.
    Then we have
    \begin{equation}
        \iota_*\cH^m(i_x^* \cP) \cong \cH^m(\iota_*i_x^*\cP) \cong \cH^m(\overline{i_x}^*(\overline{\cP})),
    \end{equation}
    which immediately implies the direction $\cH^m(i_x^* \cP) = 0 \Rightarrow \cH^m(\overline{i_x}^*(\overline{\cP})) = 0$.
    The converse direction is obtained from the property $\iota^*\iota_* \cong \id$ for the open immersion $\iota$.
\end{proof}

\subsection{Commutative diagrams}\label{subsection:compatibilities}
We collect some commutative diagrams which are used in the proof of Proposition \ref{prop:twist-functor-is-relative-fm}.

\begin{definition}[relative cup product]\label{def:relative-cup-product}
    Let $f \colon Y \to X$ be a morphism of schemes.
    The \emph{relative cup product morphism}
    \begin{equation}\label{eq:relative-cup-product}
        \CHom(f_*\cF, f_*\cG) \otimes f_*\cF \to f_*(\CHom(\cF, \cG) \otimes \cF)
        f_*\cF \otimes f_*\cG \to f_*(\cF \otimes \cG)
    \end{equation}
    for $\cF, \cG \in D_{qc}(Y)$ is the morphism
    \begin{equation}
        f_*\cF \otimes f_*\cG \xrightarrow{\eta_f} f_*f^*(f_*\cF \otimes f_*\cG) \xrightarrow{\sim} f_*(f^*f_*\cF \otimes f^*f_*\cG)\xrightarrow{f_*(\varepsilon_f \otimes \varepsilon_f)} f_*(\cF \otimes \cG),
    \end{equation}
    where $\eta_f$ and $\varepsilon_f$ are the unit and counit morphisms of the adjunction $f^* \dashv f_*$.
    In other words, the morphism \eqref{eq:relative-cup-product} is adjoint to the natural morphism $f^*(f_*\cF \otimes f_*\cG) \cong f^*f_*\cF \otimes f^*f_*\cG \to \cF \otimes \cG$.
\end{definition}

\begin{proposition}[relative cup product and Grothendieck duality]\label{prop:relative-cup-product-and-Grothendieck-duality}
    Let $f \colon X' \to X$ be a proper morphism between noetherian schemes.
    Then for any $\cF \in D^b(X')$ and $\cG \in D^b(X)$, we have a commutative diagram
    \[
        \begin{tikzcd}[column sep=4em]
            f_*\CHom(\cF, f^!\cG)\otimes f_*\cF \ar[rr, "\text{Grothendieck duality}"]\ar[d, "\text{relative cup product}"']& & \CHom(f_*\cF, \cG)\otimes f_*\cF \ar[d, "\text{evalation}"]\\
            f_*(\CHom(\cF, f^!\cG)\otimes \cF) \ar[r, "f_*(\text{evaluation})"']& f_*f^!\cG \ar[r, "\text{counit}"'] & \cG.
        \end{tikzcd}
    \]
    Here ``evaluation'' means the natural evaluation morphism $\CHom(-, *) \otimes (-) \to (*)$.
\end{proposition}
\begin{proof}
    By the adjunction $(-\otimes f_*\cF) \dashv \CHom(f_*\cF, -)$, the statement is equivalent to the commutativity of the diagram
    \[
        \begin{tikzcd}
            f_*\CHom(\cF, f^!\cG) \ar[rr, "\text{Grothendieck duality}"]\ar[d]& & \CHom(f_*\cF, \cG) \ar[d, equal]\\
            \CHom(f_*\cF, f_*(\CHom(\cF, f^!\cG)\otimes \cF)) \ar[r]& \CHom(f_*\cF, f_*f^!\cG) \ar[r, "(\text{counit}) \circ -"] & \CHom(f_*\cF, \cG),
        \end{tikzcd}
    \]
    and it commutes by the very definition of the isomorphism of Grothendieck duality (Remark \ref{remark:grothendieck-duality-isomorphism}).
\end{proof}

From now on, let $k$ be a field, $f \colon Y \hookrightarrow X$ be a \emph{closed immersion} of $k$-schemes, and $g = f\times f \colon Y \times_k Y \to X \times_k X$.
Consider the cartesian square
\begin{equation}\label{eq:diagonal-diagram}
    \begin{tikzcd}
        Y \arrow[r,"f"]\arrow[d, "\delta"'] & X \arrow[d,"\Delta"]&\\
        Y \times_k Y \arrow[r, "g"]& X \times_k X,
    \end{tikzcd}
\end{equation}
where $\Delta$ and $\delta$ are the diagonal morphisms.

\begin{lemma}\label{lem:Kunneth-formula-details}
    The isomorphism
    \begin{equation}
        f_*\cF \boxtimes f_*\cG \xrightarrow{\sim} g_*(\cF \boxtimes \cG)
    \end{equation}
    of Kunneth formula (Proposition \ref{prop:Kunneth-formula}) coincides with the morphism
    \begin{equation}
        f_*\cF \boxtimes f_*\cG \xrightarrow{\eta_g} g_*g^*(f_*\cF \boxtimes f_*\cG) \xrightarrow{\sim} g_*(f^*f_*\cF \boxtimes f^*f_*\cG) \xrightarrow{g_*(\varepsilon_f \boxtimes \varepsilon_f)}g_*(\cF \boxtimes \cG).
    \end{equation}
    Here $\eta_g \colon \id \to g_*g^*$ is the adjunction unit, the middle isomorphism is by Lemma \ref{lem:trivial-isomorphism}, and $\varepsilon_f \colon f^*f_* \to \id$ is the adjunction counit.
\end{lemma}
\begin{proof}
    We show that the following diagram is commutative:
    \[
        \begin{tikzcd}
            f_*\cF \boxtimes f_*\cG \ar[r, equal] \ar[d, "\eta_g"]& \pi_1^*f_*\cF \otimes \pi_2^*f_*\cG \ar[r]  \ar[d, "\eta_g"]& f_{1*}p_1^*\cF \otimes f_{2*}p_2^*\cG  \ar[d, "\eta_g"]\\
            g_*g^*(f_*\cF \boxtimes f_*\cG) \ar[r, equal] & g_*g^*(\pi_1^*f_*\cF \otimes \pi_2^*f_*\cG) \ar[r] \ar[d, "\sim"]& g_*g^*(f_{1*}p_1^*\cF \otimes f_{2*}p_2^*\cG) \ar[d, "\sim"]\\
            & g_*(g^*\pi_1^*f_*\cF \otimes g^*\pi_2^*f_*\cG) \ar[r] \ar[d, "\sim"]& g_*(g^*f_{1*}p_1^*\cF \otimes g^*f_{2*}p_2^*\cG) \ar[d, "\sim"] \\
            & g_*(g_1^*f_1^*\pi_1^*f_*\cF \otimes g_2^*f_2^*\pi_2^*f_*\cG) \ar[r] \ar[d, "\sim"]& g_*(g_1^*f_1^*f_{1*}p_1^*\cF \otimes g_2^*f_2^*f_{2*}p_2^*\cG) \ar[dd, "\psi"]\\
            & g_*(g_1^*p_1^*f^*f_*\cF \otimes g_2^*p_2^*f^*f_*\cG) \ar[d, "\varphi"]&  \\
            &g_*(g_1^*p_1^*\cF \otimes g_2^*p_2^*\cG) \ar[r, equal]& g_*(g_1^*p_1^*\cF \otimes g_2^*p_2^*\cG) \ar[d, equal]\\
            && g_*(\cF \boxtimes \cG).
        \end{tikzcd}
    \]
    Here the vertical isomorphisms are obvious ones, the horizontal morphisms are induced from the natural base change morphisms $\pi_1^*f_* \to f_{1*}p_1$ and $\pi_2^*f_* \to f_{2*}p_2$
    In addition, the morphisms $\varphi$ and $\psi$ are induced from the counits of the adjunctions $f^* \dashv f_*$, $f_1^* \dashv f_{1*}$, and $f_2^* \dashv f_{2*}$.

    Once this is established, the statement follows since the left outer route $f_*\cF \boxtimes f_*\cG \to g_*(\cF \boxtimes \cG)$ of the diagram is the map in the statement, and the right outer route is the map in Proposition \ref{prop:Kunneth-formula}.

    The squares except for the bottom one are commutative by the naturality.
    For the bottom square, it is enough to check that the diagram
    \[
        \begin{tikzcd}
            f_j^*\pi_j^*f_* \ar[r] \ar[d, "\sim"']& f_j^*f_{j*}p_j^* \ar[dd, "\varepsilon_{f_j}"] \\
            p_j^*f^*f_* \ar[d, "p_j^*(\varepsilon_f)"']& \\
            p_j^* \ar[r, equal]& p_j^*
        \end{tikzcd}
    \]
    for $j = 1, 2$, where $\varepsilon_{f_j}$ is the counit of the adjunction $f_j^* \dashv f_{j*}$, is commutative.
    By the adjunction $f_j^* \dashv f_{j*}$, it is equivalent to the commutativity of the diagram
    \begin{equation}\label{eq:small-diagram-in-proof-of-Kunneth}
        \begin{tikzcd}
            \pi_j^*f_* \ar[r] \ar[d]& f_{j*}p_j^* \ar[dd, equal] \\
            f_{j*}p_j^*f^*f_* \ar[d, "f_{j*}p_j^*(\varepsilon_f)"']& \\
            f_{j*}p_j^* \ar[r, equal]& f_{j*}p_j^*
        \end{tikzcd}
    \end{equation}
    or
    \begin{equation}\label{eq:small-diagram-2-in-proof-of-Kunneth}
        \begin{tikzcd}
            f_{j}^*\pi_j^*f_* \ar[r] \ar[d, "\sim"]& p_j^* \ar[dd, equal] \\
            p_j^*f^*f_* \ar[d, "p_j^*(\varepsilon_f)"']& \\
            p_j^* \ar[r, equal]& p_j^*.
        \end{tikzcd}
    \end{equation}
    The last diagram \eqref{eq:small-diagram-2-in-proof-of-Kunneth} is indeed commutative because the horizontal base change morphism $\pi_j^*f_* \to f_{j*}p_j^*$ in \eqref{eq:small-diagram-in-proof-of-Kunneth} is defined to make the diagram \eqref{eq:small-diagram-2-in-proof-of-Kunneth} commutative.
\end{proof}

\begin{lemma}\label{lem:trivial-isomorphism}
    For $\cA, \cB \in D_{qc}(X)$, there is a natural isomorphism
    \begin{equation}
        g^*(\cA \boxtimes \cB) \cong f^*\cA \boxtimes f^*\cB.
    \end{equation}
\end{lemma}
\begin{proof}
    We have
    \begin{align*}
        g^*(\cA \boxtimes \cB) & = g^*(\pi_1^*\cA \otimes \pi_2^*\cB)                    \\
                               & \cong g^*\pi_1^*\cA \otimes g^*\pi_2^*\cB               \\
                               & \cong g_1^*f_1^*\pi_1^*\cA \otimes g_2^*f_2^*\pi_2^*\cB \\
                               & \cong g_1^*p_1^*f^*\cA \otimes g_2^*p_2^*f^*\cB         \\
                               & = f^*\cA \boxtimes f^*\cB.
    \end{align*}
    Note that $g_1^*p_1^*$ (resp. $g_2^*p_2^*$) is isomorphic to the pullback by the first (resp. second) projection $Y \times_k Y \to Y$.
\end{proof}

For $\cF, \cG \in D_{qc}(Y)$, there are two natural morphisms $f_*\cF \boxtimes f_*\cG \to g_*(\cF \boxtimes \cG)$ and $f_*\cF \otimes f_*\cG \to f_*(\cF \otimes \cG)$.
The first one is the Kunneth formula isomorphism (Proposition \ref{prop:Kunneth-formula} and Lemma \ref{lem:Kunneth-formula-details}), and the second one is the relative cup product (which is not necessarily an isomorphism).
The next proposition says they are compatible with each other when restricted to the diagonal.
\begin{proposition}\label{prop:Kunneth-and-cup-product}
    For $\cF, \cG \in D_{qc}(Y)$, we have a commutative diagram
    \[
        \begin{tikzcd}
            f_*\cF \boxtimes f_*\cG \arrow[rr, "\eta_\Delta"]\arrow[d, "\text{Kunneth formula}"', "\sim"]&&\Delta_{*}(f_*\cF \otimes f_*\cG)\arrow[d, "\Delta_{*}(\text{relative cup product})"]\\
            g_*(\cF \boxtimes \cG) \arrow[r, "g_*(\eta_\delta)"' yshift=-1.0ex]&g_*\delta_{*}(\cF \otimes \cG) \arrow[r, "\sim"', sloped]&\Delta_{*}f_*(\cF \otimes \cG).
        \end{tikzcd}
    \]
    Here $\eta_\Delta$ and $\eta_\delta$ are the unit morphisms $\id \to \Delta_* \Delta^*$ and $\id \to \delta_* \delta^*$, respectively.
\end{proposition}
\begin{proof}
    In the proof, we denote the unit and counit of the adjunction $\varphi^* \dashv \varphi_*$ for a morphism $\varphi$ by $\eta_{\varphi}$ and $\varepsilon_{\varphi}$, respectively.

    We first show that the diagram
    \begin{equation}\label{eq:commutative-square-in-Kunneth-on-diagonal}
        \begin{tikzcd}
            \Delta^*(f_*\cF \boxtimes f_*\cG) \ar[rr, "\sim"]\arrow[d, "\Delta^*(\text{Kunneth formula})"', "\sim"]& & f_*\cF \otimes f_*\cG \ar[d, "\text{relative cup product}"]\\
            \Delta^*g_*(\cF \boxtimes \cG) \ar[r] & f_*\delta^*(\cF \boxtimes \cG) \ar[r, "\sim"] & f_*(\cF \otimes \cG)
        \end{tikzcd}
    \end{equation}
    is commutative.
    Here $\Delta^*g_* \to f_*\delta^*$ is the natural morphism.
    By Lemma \ref{lem:Kunneth-formula-details} and Definition \ref{def:relative-cup-product}, the diagram \eqref{eq:commutative-square-in-Kunneth-on-diagonal} is the outer square of the following:
    \begin{equation}\label{eq:commutative-square-in-Kunneth-on-diagonal2}
        \begin{tikzcd}
            \Delta^*(f_*\cF \boxtimes f_*\cG) \ar[r, equal]\arrow[dd, "\Delta^*(\eta_g)"']& \Delta^*(f_*\cF \boxtimes f_*\cG) \ar[r, "\sim"] \ar[d, "\eta_f"]& f_*\cF \otimes f_*\cG \ar[d, "\eta_f"]\\
            & f_*f^*\Delta^*(f_*\cF \boxtimes f_*\cG)\ar[r, "\sim"] \ar[d, "\sim"]&  f_*f^*(f_*\cF \otimes f_*\cG)\ar[dd, "\sim"]\\
            \Delta^*g_*g^*(f_*\cF \boxtimes f_*\cG) \ar[r]\arrow[d, "\sim"]&f_*\delta^*g^*(f_*\cF \boxtimes f_*\cG) \ar[d, "\sim"]& \\
            \Delta^*g_*(f^*f_*\cF \boxtimes f^*f_*\cG) \ar[r]\arrow[d, "\Delta^*g_*(\varepsilon_f \boxtimes \varepsilon_f)"']&f_*\delta^*(f^*f_*\cF \boxtimes f^*f_*\cG) \ar[r, "\sim"] \ar[d, "f_*\delta^*(\varepsilon_f \boxtimes \varepsilon_f)"]& f_*(f^*f_*\cF \otimes f^*f_*\cG)\ar[d, "f_*(\varepsilon_f \otimes \varepsilon_f)"]\\
            \Delta^*g_*(\cF \boxtimes \cG) \ar[r] & f_*\delta^*(\cF \boxtimes \cG) \ar[r, "\sim"] & f_*(\cF \otimes \cG).
        \end{tikzcd}
    \end{equation}
    Then it suffices to show that all the small squares are commutative.
    The commutativity of the left upper square is equivalent to that of
    \begin{equation}
        \begin{tikzcd}
            f^*\Delta^* \ar[r, equal]\arrow[d, "f^*\Delta^*(\eta_g)"']& f^*\Delta^* \ar[d, "\sim"]\\
            f^*\Delta^*g_*g^* \ar[r]& \delta^*g^*
        \end{tikzcd}
    \end{equation}
    by $f^* \dashv f_*$, and that of
    \begin{equation}\label{eq:commutative-square-in-Kunneth-on-diagonal3}
        \begin{tikzcd}
            \id \ar[r, equal]\arrow[d, "\eta_g"']& \id \ar[d]\\
            g_*g^* \ar[r]& \Delta_*f_*\delta^*g^*
        \end{tikzcd}
    \end{equation}
    by $f^*\Delta^* \dashv \Delta_*f_*$.
    By definition of $\Delta^*g_* \to f_*\delta^*$, under the identification $\Delta_*f_* \cong g_*\delta_*$ the lower horizontal morphism and the right vertical morphism in \eqref{eq:commutative-square-in-Kunneth-on-diagonal3} are identified with $g_*(\eta_\delta)$ and $\eta_{g \circ \delta}$, respectively:
    \begin{equation}\label{eq:commutative-square-in-Kunneth-on-diagonal4}
        \begin{tikzcd}
            \id \ar[r, equal]\arrow[d, "\eta_g"']& \id \ar[d, "\eta_{g \circ \delta}"]\\
            g_*g^* \ar[r, "g_*(\eta_\delta)"]& g_*\delta_*\delta^*g^*.
        \end{tikzcd}
    \end{equation}
    This is commutative by the compatibility of the pullback-pushforward adjunction with composition of morphisms.
    Then \eqref{eq:commutative-square-in-Kunneth-on-diagonal3}, and hence the left upper square of \eqref{eq:commutative-square-in-Kunneth-on-diagonal2}, are commutative.
    The left middle square and the left lower square in \eqref{eq:commutative-square-in-Kunneth-on-diagonal2} are commutative by the naturality of $\Delta^*g_* \to f_*\delta^*$.
    Similarly, the right upper square and the right lower square in \eqref{eq:commutative-square-in-Kunneth-on-diagonal2} are commutative by the naturality of the horizontal isomorphisms.
    Finally, the morphism $f_*\delta^*g^*(f_*\cF \boxtimes f_*\cG) \xrightarrow{\sim} f_*(f^*f_*\cF \otimes f^*f_*\cG)$ is the natural isomorphism of Lemma \ref{lem:trivial-isomorphism}, which is defined to make the right middle square commutative.
    Therefore, the diagram \eqref{eq:commutative-square-in-Kunneth-on-diagonal} is commutative.

    By applying the adjunction $\Delta^* \dashv \Delta_*$ to \eqref{eq:commutative-square-in-Kunneth-on-diagonal}, we obtain the commutative diagram
    \begin{equation}
        \begin{tikzcd}
            f_*\cF \boxtimes f_*\cG \ar[rr, "\eta_\Delta"]\arrow[d, "\text{Kunneth formula}"', "\sim"]& & \Delta_*f_*(\cF \otimes f_*\cG) \ar[d, "\text{relative cup product}"]\\
            g_*(\cF \boxtimes \cG) \ar[r] & \Delta_*f_*\delta^*(\cF \boxtimes \cG) \ar[r, "\sim"] & \Delta_*f_*(\cF \otimes \cG)
        \end{tikzcd}
    \end{equation}
    In addition, there is a commutative diagram
    \begin{equation}
        \begin{tikzcd}
            g_*(\cF \boxtimes \cG) \ar[r] \ar[rd, bend right, "g_*(\eta_\delta)"']& \Delta_*f_*\delta^*(\cF \boxtimes \cG)\\
            &g_*\delta_*\delta^*(\cF \boxtimes \cG) \ar[u, "\sim"]
        \end{tikzcd}
    \end{equation}
    by definition of $\Delta^*g_* \to f_*\delta^*$.
    These two diagrams show the statement.
\end{proof}

\subsection{Proof of Proposition \ref{prop:twist-functor-is-relative-fm}}\label{subsection:proof-of-twist-functor-is-relative-fm}
In this subsection, we prove Proposition \ref{prop:twist-functor-is-relative-fm}.
Recall our situation:
Let $X$ and $T$ be smooth quasi-projective varieties and $\pi \colon X \to T$ be a flat morphism.
Let $0 \in T$ be a closed point and $i \colon X_0 = \pi^{-1}(0) \hookrightarrow X$ be the fiber over $0$.
We have a diagram
\[
    \begin{tikzcd}
        X_0 \arrow[r,"i"]\arrow[d, "\delta"'] & X \arrow[d,"\Delta_T"]\arrow[rd, bend left, "\Delta_k"]&\\
        X_0 \times_k X_0 \arrow[r, "j"] \ar[d]& X \times_T X\arrow[r, "\iota"] \ar[d]&X \times_k X \\
        \{0\} \ar[r] & T &
    \end{tikzcd}
\]
in which $j$ and $\iota$ are the natural inclusions and $\delta$, $\Delta_T$, and $\Delta_k$ are the diagonal morphisms.
The two squares are cartesian and tor-independent by \cite[Lemma 2.25]{MR2238172}.
\begin{proposition}[Proposition \ref{prop:twist-functor-is-relative-fm}]\label{prop:twist-functor-is-relative-fm-recap}
    Let $\cE \in D^b(X_0)$ be a half-spherical object.
    \begin{enumerate}
        \item  The twist functor $T_{i_*\cE}$ is a relative integral functor with respect to $T$, i.e.~there exists an object $\cR \in D_{qc}(X \times_T X)$ such that $\Phi_{\cR} \cong T_{i_*\cE}$.
        \item The choice of $\cR$ in (1) is unique up to isomorphism and $\cR \in \FM_T(X)$.
    \end{enumerate}
\end{proposition}

\begin{proof}[Proof of (1)]
    Since the kernel $\cP_{\cE}$ of $T_{i_*\cE}$ is the cone of the map
    \begin{equation}
        \ev \colon (i_*\cE)^\vee \boxtimes i_*\cE \to \Delta_{k*}\cO_X,
    \end{equation}
    one simply has to find a morphism $\widetilde{\ev}$ in $D_{qc}(X \times_T X)$ that satisfies $\iota_*(\widetilde{\ev}) = \ev$, up to isomorphism.
    Denote $\CHom_{X_0}(\cE, i^!\cO_X)$ by $\cE'$.
    Consider the morphism
    \begin{equation}
        \widetilde{\ev} \colon j_*(\cE' \boxtimes \cE) \to j_*\delta_*(\cE' \otimes \cE)\cong \Delta_{T*}i_*(\cE' \otimes \cE) \to \Delta_{T*}\cO_X
    \end{equation}
    with
    \begin{itemize}
        \item $\cE' \boxtimes \cE \to \delta_*(\cE' \otimes \cE)$ is the adjoint to the natural isomorphism $\delta^*(\cE' \boxtimes \cE) \cong \cE' \otimes \cE$,
        \item $j_*\delta_* \cong \Delta_{T*}i_*$ is the natural isomorphism, and
        \item $i_*(\cE' \otimes \cE) \to \cO_X$ is the adjoint to the natural pairing $\cE' \otimes \cE \to i^!\cO_X$.
    \end{itemize}
    The domain and codomain of this map satisfy
    \begin{enumerate}
        \item[(a)] $(i_*\cE)^\vee \boxtimes i_*\cE \cong \iota_*j_*(\cE' \boxtimes \cE)$ by Grothendieck duality and Kunneth formula, and
        \item [(b)] $\Delta_{k*}\cO_X \cong \iota_*\Delta_{T*}\cO_X$.
    \end{enumerate}
    Then the following proposition shows that $\widetilde{\ev}$ is the morphism we are looking for.
\end{proof}

\begin{proposition}
    We have a commutative diagram
    \begin{equation}
        \begin{tikzcd}\label{eq:commutative-diagram-for-evaluation-map}
            (i_*{\cE})^\vee \boxtimes i_*{\cE}\ar[r, "\ev"] \ar[d, "\sim", sloped] & \Delta_{k*}\cO_X \ar[d, "\sim", sloped]\\
            \iota_*j_*(\cE' \boxtimes \cE) \ar[r, "\iota_*(\widetilde{\ev})"]& \iota_*\Delta_{T*}\cO_X
        \end{tikzcd}
    \end{equation}
    in $D_{qc}(X \times_k X)$ with the vertical isomorphisms being (a) and (b) in the previous proof.
\end{proposition}
\begin{proof}
    The diagram in the statement can be decomposed into the following:
    \[
        \begin{tikzcd}
            (i_*{\cE})^\vee \boxtimes i_*{\cE} \ar[rr]\ar[d, "\sim", sloped] &  & \Delta_{k*}((i_*{\cE})^\vee \otimes i_*{\cE}) \ar[rr]\ar[d, "\sim", sloped] &  & \Delta_{k*}\cO_X \ar[ddd, equal]\\
            i_*\cE'\boxtimes i_*{\cE} \ar[rr]\ar[dd]&  & \Delta_{k*}(i_*\cE'\otimes i_*{\cE}) \ar[dd] &  & \\
            & (A) &  & (B) & \\
            \iota_*j_*(\cE' \boxtimes \cE) \ar[r] & \iota_*j_*\delta_*(\cE' \otimes \cE) \ar[r, "\sim"]\ar[rd, out=-90, in=180, "\sim"] & \Delta_{k*}(i_*(\cE' \otimes \cE)) \ar[rr]\ar[d, "\sim", sloped] &  & \Delta_{k*}\cO_X \ar[d, "\sim", sloped] \\
            &  & \iota_*\Delta_{T*}i_*(\cE' \otimes \cE) \ar[rr] &  & \iota_*\Delta_{T*}\cO_X.
        \end{tikzcd}
    \]
    Here the upper and middle horizontal maps in the left column are given by the unit morphism
    $\cA \boxtimes \cB \to \Delta_{k*}\Delta_k^*(\cA \boxtimes \cB) \cong \Delta_{k*}(\cA \otimes \cB)$. The lower horizontal map is defined similarly.
    The square (A) is commutative by Proposition \ref{prop:Kunneth-and-cup-product} for $f = i$, $\cF = \cE'$, and $\cG = \cE$.
    The square (B) is commutative by Proposition \ref{prop:relative-cup-product-and-Grothendieck-duality} for $f = i$, $\cF = \cE$, and $\cG = \cO_X$.
    The other parts of the diagram are commutative by the naturality.
\end{proof}

\begin{proof}[Proof of (2)]
    We first show that the specific choice of $\cR = \Cone(\widetilde{\ev})$ constructed in (1) is an element of $\FM_T(X)$ by using the criterion in Lemma \ref{lem:criterion-for-invertible-kernel}.
    To check that the assumption of the lemma satisfied, by Theorem \ref{thm:adjoint-to-integral-functors}, it suffices to show that $\cR$ is in $D^b(X \times_T X)$, $\pi_1$-perfect, and has proper support with respect to $\pi_1 \colon X \times_T X \to X$.
    As these conditions have the two-out-of-three property it is enough to examine $j_*(\cE' \boxtimes \cE)$ and $\Delta_{T*}\cO_X$.

    The coherent sheaf $\Delta_{T*}\cO_X$ is clearly in $D^b(X \times_T X)$ and has proper support over $X$.
    It is also $\pi_1$-perfect, since $\cO_X$ is perfect and $\Delta_{T}$ is proper (Proposition \ref{prop:proper-push-forward-of-f-perfect}).

    Since $\iota_*$ preserves cohomology sheaves, the isomorphism $\iota_*(j_*(\cE' \boxtimes \cE)) \cong (i_*{\cE})^\vee\boxtimes i_*{\cE}$ shows that $j_*(\cE' \boxtimes \cE)$ is in $D^b(X \times_T X)$ and has proper support over $X$.
    To show that $j_*(\cE' \boxtimes \cE)$ is $\pi_1$-perfect, we use Proposition \ref{prop:crirtion_for_relative_perfection}.
    Let $\cF \in D^b_{qc}(X)$.
    We have
    \begin{align}
        \iota_*(j_*(\cE' \boxtimes \cE) \otimes \pi_1^*\cF) & \cong \iota_*j_*((\cE' \otimes i^*\cF) \boxtimes \cE)  \\
                                                            & \cong i_*(\cE' \otimes i^*\cF) \boxtimes i_*{\cE}      \\
                                                            & \cong (i_*(\cE') \otimes \cF) \boxtimes i_*{\cE}       \\
                                                            & \cong ((i_*{\cE})^\vee \otimes \cF) \boxtimes i_*{\cE}
    \end{align}
    in $D_{qc}(X \times_k X)$, where we used the projection formula for the first and third isomorphisms, and the Kunneth formula for the second isomorphism.
    Since $i_*\cE$ and $(i_*{\cE})^\vee$ are perfect and $\cF$ is bounded, the last term is bounded.
    This shows that $j_*(\cE' \boxtimes \cE)$ is $\pi_1$-perfect.

    Next, suppose that we are given another object $\cR' \in D_{qc}(X \times_T X)$ such that $\Phi_{\cR'} \cong T_{i_*\cE}$.
    It satisfies the assumption of Lemma \ref{lem:criterion-for-invertible-kernel} since we have already shown that the adjoint of $\Phi_{\cR'} \cong \Phi_{\cR}$ is an integral functor.
    Then the condition (2) in Lemma \ref{lem:criterion-for-invertible-kernel} satisfied for $\Phi_{\cR'} \cong \Phi_{\cR}$ and thus $\cR' \in \FM_T(X)$.

    Finally, we have the uniqueness since all the possible $\cR$'s are in $\FM_T(X)$ and the map $\Phi_{(-)} \colon \FM_T(X) \to \Auteq D^b(X)$ is injective by Corollary \ref{cor:FM-to-Auteq} and Proposition \ref{prpo:forget-base}.
\end{proof}

\printbibliography
\end{document}